\documentclass[a4paper]{article}

\usepackage{fullpage}
\usepackage[latin1]{inputenc}

\usepackage[affil-it]{authblk}

\usepackage{graphics}
\usepackage{epsfig}
\usepackage{amsmath}
\usepackage{amssymb}
\usepackage{fleqn}
\usepackage{enumitem}
\usepackage{epstopdf}
\usepackage[dvipsnames]{xcolor}

\usepackage{color}

\def\ul{\underline}

\def\ds{\displaystyle}
\def\proof{\noindent{\em Proof.} }
\def\qed{\vspace{2mm} \vrule height4pt width3pt depth2pt}

\newtheorem{theo}{Theorem}
\newtheorem{proposition}{Proposition} 
\newtheorem{corollary}{Corollary}
\newtheorem{lemma}{Lemma} 

\newtheorem{remark}{Remark}  

%% deux macro Terence
\newcommand{\R}{\mathbb{R}}
\newcommand{\eps}{\varepsilon}

\title{Dynamical modelling and optimal control of landfills}
\author{A. Rapaport}
\affil{MISTEA, UMR INRA/SupAgro, 2 pl. Viala, 34060 Montpellier,
  France

and MODEMIC, EPI INRA/Inria, 2004 rte des Lucioles, 06902 Sophia-Antipolis,
France}
\author{T. Bayen}
\affil{Universit\'e Montpellier 2, CC 051, 34095 Montpellier cedex 5,
  France

and MODEMIC, EPI INRA/Inria, 2004 rte des Lucioles, 06902 Sophia-Antipolis
France}
\author{M. Sebbah}
\affil{Univ. Tecnica Federico Santa Maria, Dep. Mat., Avda Espana 1680, Valparaiso, Chile}
\author{A. Donoso}
\affil{BIONATURE, CIRIC, INRIA-Chile. Escuela de Ingeniería
  Bioquímica, Pontificia Universidad Catolica de Valparaíso, General Cruz 34, Valparaíso, Chile}
\author{A. Torrico}
\affil{Centro de Modelamiento Matem\'atico, Universidad de Chile, , Beauchef 851, Santiago, Chile}
\date{\today}

\begin{document}

\maketitle

\begin{abstract}
We propose a simple model of landfill and study a minimal time
control problem where the re-circulation leachate is the manipulated variable.
We propose a scheme to construct the optimal strategy by dividing the state
space into three subsets $\mathcal{E}_0$, $\mathcal{Z}_{1}$
and the complementary. On $\mathcal{E}_0$ and
  $\mathcal{Z}_{1}$, the optimal control is constant until reaching
  target, while it can exhibit a singular arc outside these two
  subsets. Moreover, the singular arc could have a {\em barrier}.
In this case, we prove the existence of a switching curve 
that passes through a point of {\em prior saturation} under the assumption that 
the set $\mathcal{E}_0$ intersects the singular arc.
Numerical computations allow then to determine the switching curve and depict
the optimal synthesis.
\\
\\
{\bf Key-words.} Minimal time control, singular arc, bio-remediation,
biotechnology.\\

\noindent{\bf AMS Subject Classification.} 49J15, 49K25, 49N35.
\end{abstract}

\section{Introduction}

Landfills are controlled sites where the solid waste is disposed and it is slowly treated and stabilized under anaerobic conditions. Depending  on the specific region context this disposal method is highly encouraged (developing countries and big developed countries) or they are being replaced by more sustainable ways of waste treatment (small and medium-size developed countries). 

Landfill leachate is the liquid effluent generated during the landfill
operation. This waste-water is quite problematic due to its complex
composition thus the existing treatment technologies for this
waste-water are very costly.  During the first years of the landfill
operation the solubilization and fatty acid transformation of the
organic soluble compounds is mainly carried out, which means that
methane production is low \cite{H83}. Regardless, there are some key
factor that influenced the landfill behavior such as the re-circulation
leachate flow which increases the bio-reaction rates since it improved
the system mixing. Overall and due to scale reason (this bio-reactor is
humongous), the re-circulation flow may represent the only variable that can be at a
certain level manipulated and controlled once the landfill has begun
to operate. Mathematical models have been increasingly applied for
analysis, control and optimization of bio-processes. However few
application may be found in the literature in regards to control of
landfill operation. Due to the complexity of the system PDEs-based
models or Computational Fluid dynamic has been mainly used to
represent the process \cite{FB10,GT07,MR00}. A mechanistic model
assuming several considerations and all the steps in anaerobic
digestion was developed in \cite{SNBCB01} in which ordinary differential were used assuming perfect mixing.

When dealing with complex system such as anaerobic digestion it has
been observed that in some cases, using simplifies mechanistic
approaches may yield to results as good as the ones obtained using
over-parameterized models \cite{RRLB08}. Optimal
control strategies to estimate minimal time have already been shown to
be quite useful in order
to get an insight into the best performances of expect from piloting
efficiently bio-processes (see for instance \cite{BGM13,BRS14,GRR08,M99,RD11,RRGR14}).

In this work, we consider a simplified mathematical model of the dynamics
of solubilized and unsolubilized substrates to be bio-converted in a
landfill. The objective of the control problem is to drive the system as fast a possible to
low values of both concentrations of substrate, controlling the
leachate re-circulation.
We show that the optimal strategy is bang-bang with a possible
singular arc, but the determination of the optimal locus of switching is not
straightforward and requires a precise analysis. 
The number of switching times, and the on-line variables required to
be known or estimated for making the decision to switch at the right
time, depend on both the characteristics of the bacterial growth and on the
initial condition. 
Therefore this analysis provide new insights for the real-time
piloting of landfill, in terms of sensors, actuators and initial
conditions to be chosen by the practitioners that have to manipulate
the re-circulation flow.

The paper is organized as follows. In Section \ref{sec-statement}, we
introduce the optimal control problem, and we give properties on the
control system. 
In Section \ref{sec-opti1}, we state the Pontryagin
Maximum Principle and introduce a partition of the state space.
Section \ref{sec-constant} shows that on two subsets of initial conditions,
${\cal Z}_{1}$ and ${\cal E}_{0}$, the target is reached optimally with a constant control (see Propositions \ref{proposition1} and \ref{proposition2}).
Then, Section \ref{sec-admissible} gives the complete optimal synthesis when
there is no singular arc (see Proposition \ref{propositionMonod}) or when the singular arc is admissible which means that 
the singular control takes lower values than the upper bound $u_{max}$ for controls (see {Proposition \ref{propositionHaldane1}).
%, assuming that the singular
%control does not take larger values than the maximal admissible one 
%in a sub-domain of the state space that is target dependent. 
%In particular, we compute the optimal control in a subset $\mathcal{E}_0$ of th%e state space that is target dependent.  
In Section \ref{sec-opti2}, we study the particular case where 
the singular control saturates the maximal admissible value. 
In this case, the singular arc has a {\it{barrier}} \cite{bosc} that corresponds 
 to the set of points of the singular arc where the singular control takes larger values than the upper bound. We prove the existence of a point 
of prior saturation and a switching curve $\mathcal{C}_1$ under the condition that $\mathcal{E}_0$ intersects the singular arc. 
This means that optimal trajectories should leave the singular locus before the saturation point (which is the unique 
point of the singular locus where the singular control equals
the maximal re-circulation flow. An optimal feedback of the problem is then given in Theorem \ref{main1}.
The assumption of non-emptiness is crucial in order to obtain the optimal synthesis in presence of a saturation point on the singular locus. 
In fact, an important feature of the system is that the boundary of the state space is invariant by the system. 
Therefore, if this intersection is empty, then the Pontryagin Maximum Principle does not allow us to 
exclude extremal trajectories with the constant control $u_{max}$ to be optimal until reaching $\mathcal{E}_0$ (see Theorem \ref{main1bis}).}
%conclude on 
%the existence of $\mathcal{C}_1$ as in the previous case.}
%If this assumption does not hold, then we cannot 
%This assumption is used to overcome the fact that the boundary of the state space is invariant by the system. 
Section \ref{numeric-sec} depicts the optimal synthesis in the different cases appearing in the analysis of the problem. 
We end the paper by a conclusion with application perspectives.

\section{Model and preliminaries}{\label{sec-statement}

In the spirit of mathematical modeling in microbiology
  \cite{P75,SW95}, we propose a
  model of homogeneous landfill with a specific effect of
  a re-circulation flow on the bacterial activity, that is described by
  the following differential equations.
\begin{equation}
\left\{
\begin{array}{lll}\label{eq1a-eq1b-eq1c}
%\begin{cases}
\dot S_{1}  & = &  -\gamma(Q)  f(S_{1}),\\
\dot S_{2} &  = &  \gamma(Q) f(S_{1})-\mu(S_{2})X,\\
\dot X  & = &  \mu(S_{2})X,
%\end{cases}
\end{array} 
\right.
\end{equation}
where $S_{1}$, $S_{2}$ stand respectively for unsolubilized and
solubilized substrates. $X$ is the concentration of the biomass that
degrades the solubilized substrate with a yield factor kept equal to
one (without any loss of generality, at the price to change the biomass
unit, one can always make this assumption) and specific growth rate
$\mu(\cdot)$. We assume that the reaction takes place in (closed) batch
conditions.
In addition, the re-circulation of the leachate, to be controlled with the
flow rate $Q \in [0,Q_{\max}]$, induces a solubilization of the
unsolubilized substrate $S_{1}$ into $S_{2}$ at a speed that depends
on $Q$, $S_{1}$ and possibly $X$, along with the following assumptions.
Following for instance \cite{SNBCB01}, we assume that $\gamma$ is
increasing over $\R_+$ with $\gamma(0)=0$. Therefore, we may set 

$$
u:=\frac{\gamma(Q)}{\gamma(Q_{\max})},
$$
which can be chosen as new control variable. Without any loss of generality, we can assume that $u$ is a measurable function w.r.t. the time $t$ 
taking values within $[0,1]$, i.e. the set of admissible controls is 
$$
\mathcal{U}:=\{u:[0,\infty) \rightarrow  [0,1] \; ; \; u \; \mathrm{meas.}\}.
$$  
We also require the following hypothesis on $f$:
\\

{\bf H0.} The function $f(\cdot)$ is increasing over $\R_+$ and satisfies $f(0)=0$.\\

We shall consider a general class of growth curves $\mu(\cdot)$, that
includes the usual Monod and Haldane ones:\\

{\bf H1.} The function $\mu(\cdot)$ is non-negative and equal to
zero only at $S_{2}=0$. Furthermore, there exists $S^{\star}_{2}>0$ such that
$\mu(\cdot)$ is increasing on $[0,S^{\star}_{2})$, decreasing on
$(S^{\star}_{2},+\infty)$, or $\mu(\cdot)$ is increasing and we put
$S^{\star}_{2}=+\infty$.\\

\bigskip 
One can straightforwardly check from \eqref{eq1a-eq1b-eq1c}
that the following property holds:
\[
\dot S_{1}+\dot S_{2}+\dot X=0 \quad \Rightarrow \quad
\exists M\geq 0 \mbox{ s.t. } S_{1}(t)+S_{2}(t)+X(t)=M, \quad \forall t \ .
\]
Given a positive value of the constant $M$ that characterizes a landfill, one
can rewrite the dynamics as a two-dimensional system
\begin{equation}\label{dynS1-dynS2}
\left\{
\begin{array}{lll}
\dot S_{1}  & = &  -uf(S_{1}),\\
\dot S_{2}  & = &  u f(S_{1})-\mu(S_{2})(M-S_{1}-S_{2}),
\end{array}
\right.
\end{equation} 
defined on the invariant domain
\[
{\cal D}:=\{(S_{1},S_{2})\in \R_{+}\times \R_+ \mbox{ with }
0<S_{1}+S_{2}< M\}.
\]
An important feature of the system is that the boundary sub-sets $\{0\} \times [0,M]$ and 
$N:=\{(S_1,S_2) \in \R_+\times \R_+ \; ; \; S_1+S_2=M\}$ are invariant by \eqref{dynS1-dynS2} 
(this property has several consequences on the optimal synthesis, see section \ref{pl}).}

The optimal control problem can be stated as follows. Given an initial condition in ${\cal D}$, the objective
is to drive in minimal time the state $S(\cdot)=(S_1(\cdot),S_2(\cdot))$ to a
target ${\cal T}$ for which $S_{1}$ and $S_{2}$ are below given positive thresholds $\ul
S_{1}$, $\ul S_{2}$:
\[
{\cal T}:=\{(S_{1},S_{2})\in [0,\ul S_{1}]\times [0,\ul
S_{2}] \},
\]
with $(\ul S_{1},\ul S_{2})\in {\cal D}$.\\

\bigskip

Let us first study the attainability of the target from any initial
condition in ${\cal D}$.

\begin{proposition} Given an initial condition $S^{0} \in {\cal
    D}\setminus {\cal T}$, the feedback law
\[
u[S]:=\left|
\begin{array}{ll} 
1 & \mbox{if } S_{1}>\ul S_{1},\\
0 & \mbox{otherwise},
\end{array}
\right.
\]
drives the state in finite time in ${\cal T}$.

\end{proposition}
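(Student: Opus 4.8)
The plan is to exploit the two-phase structure induced by this feedback: switching depends only on $S_1$, and once $S_1$ has dropped to $\ul S_1$ it is frozen there (since $u=0 \Rightarrow \dot S_1 =0$), so there is no chattering and the closed-loop trajectory is well defined piecewise in $\mathcal{D}$. First I would check that $\mathcal{D}$ is invariant along the closed loop. From \eqref{dynS1-dynS2} one has $\frac{d}{dt}(S_1+S_2)=-\mu(S_2)(M-S_1-S_2)\le 0$ on $\mathcal{D}$, so $S_1+S_2$ cannot increase and the trajectory never reaches $N=\{S_1+S_2=M\}$; similarly $S_1$ stays $\ge \ul S_1>0$ during the first phase and constant afterwards, while $\dot S_2\ge 0$ whenever $S_2=0$. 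Hence the state remains in $\mathcal{D}$ and every quantity below stays finite.

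Next I would analyse the first phase, where $S_1>\ul S_1$ and $u=1$, so that $\dot S_1=-f(S_1)$. By H0, $f$ is increasing with $f(0)=0$, hence $f(s)\ge f(\ul S_1)>0$ on the relevant range; thus $S_1$ strictly decreases and reaches $\ul S_1$ at the finite time
\[
T_1=\int_{\ul S_1}^{S_1^0}\frac{ds}{f(s)}\le \frac{S_1^0-\ul S_1}{f(\ul S_1)}<+\infty .
\]
During this phase $S_2$ remains bounded by $M$ (invariance of $\mathcal{D}$), so its value $\sigma:=S_2(T_1)$ is finite and satisfies $\ul S_1+\sigma<M$.

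Then I would treat the second phase, where $S_1\le \ul S_1$ and $u=0$, so $S_1$ stays at its frozen value $\bar S_1\le \ul S_1$ while $\dot S_2=-\mu(S_2)(M-\bar S_1-S_2)$. On $\mathcal{D}$ the factor $M-\bar S_1-S_2$ is positive and, since $S_2$ decreases, bounded below by $M-\bar S_1-\sigma>0$; moreover $\mu$ is continuous and strictly positive on the compact interval $[\ul S_2,\sigma]$ by H1 (recall $\ul S_2>0$). Hence $S_2$ strictly decreases and reaches $\ul S_2$ at the finite time
\[
T_2=\int_{\ul S_2}^{\sigma}\frac{ds}{\mu(s)\,(M-\bar S_1-s)}<+\infty ,
\]
after which $(S_1,S_2)=(\bar S_1,\ul S_2)\in \mathcal{T}$. (If $S_1^0\le \ul S_1$ initially, one skips the first phase with $\bar S_1=S_1^0$; if $\sigma\le \ul S_2$ already, the target is reached with $T_2=0$.) The total time $T_1+T_2$ is finite, which proves the claim.

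The main obstacle---and the only place where the hypotheses genuinely enter---is the finiteness of these two integrals. Their integrands are singular precisely at $S_1=0$ (where $f=0$) and at $S_2=0$ (where $\mu=0$), so attempting to reach the axes would give divergent integrals; the key point is that the thresholds $\ul S_1,\ul S_2$ are strictly positive, so integration stops before these singular zeros and the bounds above hold. The monotonicity supplied by H0 and H1, together with the invariance of $\mathcal{D}$ keeping $M-S_1-S_2$ away from zero in the second phase, is exactly what makes both integrals converge.
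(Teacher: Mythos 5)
Your proof is correct and follows essentially the same two-phase argument as the paper: with $u=1$, $\dot S_1=-f(S_1)\le -f(\ul S_1)<0$ forces $S_1$ to reach $\ul S_1$ in finite time, after which $u=0$ freezes $S_1$ and $S_2$ decreases monotonically below $\ul S_2$. The only cosmetic difference is that you make both phases quantitative via explicit time-of-passage integrals (bounded because the thresholds $\ul S_1,\ul S_2$ are strictly positive and $M-S_1-S_2$ stays bounded away from zero), whereas the paper argues the second phase qualitatively, noting that $S_2(t)\to 0<\ul S_2$ since $\mu$ vanishes only at $0$.
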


\proof
Consider trajectories generated with the proposed feedback law.
If $S_{1}(0)>\ul S_{1}$, $S_{1}(\cdot)$ is solution of
\[
\dot S_{1} = -f(S_{1}),
\]
until $S_{1}(\cdot)$ reaches $\ul S_{1}$ in a finite time $T$,
the right member of the differential equation being strictly
negative. 
If $S_{1}(0)\leq \ul S_{1}$ we simply take $T=0$.
At time $T$, if $S_{2}(T)\leq \ul S_{2}$, the state is in the
target.
Otherwise, from time $T$, $S_{1}(t)$ stays equal
to $ S_{1}(T)$ for any future time $t$, and $S_{2}(\cdot)$ 
is solution of the differential equation
\[
\dot S_{2} = -\mu(S_{2})(M-S_{1}(T)-S_{2}).
\]
Consequently, $S_{2}(\cdot)$ is decreasing and therefore one has $S_2(t)\rightarrow 0$ when 
$t$ goes to $+\infty$ (this follows from the definition of $\mathcal{D}$, the monotonicity of $S_2(\cdot)$ for $t\geq T$, and the fact that $\mu(0)=0$).
%Note that one has also
%\[
%\frac{d}{dt}\left(M-S_{1}-S_{2}\right)=\mu(S_{2})\left(M-S_{1}-S_{2}\right)
%\]
%whatever is the control $u(\cdot)$.
%Consequently, one can write
%\[
%\dot S_{2} \leq -\mu(S_{2})(M-S_{1}(0)-S_{2}(0))
%\]
%whose right member stays strictly negative at any time $t\geq T$. Consequently,
%$S_{2}(\cdot)$ is decreasing and 
Thus, the solution reaches $\ul S_{2}$ in finite
time, that is the state enters the target.
\qed

\bigskip

So the minimal time problem is well defined in ${\cal D}$.

\bigskip

We shall denote in the following
$S^{u}(\cdot)$, resp. 
$S^{max}(\cdot)$, a solution of 
\eqref{dynS1-dynS2} in the domain ${\cal
    D}\setminus{\cal T}$ for the control $u(\cdot)$ resp. the constant
control $u=1$.

\begin{lemma}
\label{Lemma1}
For any $S^{0}\in {\cal D}\setminus {\cal T}$ and control $u(\cdot)$, the
solutions $S^{u}(\cdot)$, $S^{max}(\cdot)$ with $S^{u}(0)=S^{max}(0)=S^{0}$ fulfill
\[
S^{u}(t)\in {\cal S}^{\max}:=\bigcup_{\tau \geq 0}\left\{S \in {\cal
  D}\setminus{\cal T} \, ; \, S_{1}=S_{1}^{\max}(\tau), \; S_{2}\leq
S_{2}^{\max}(\tau)\right\}, \quad \forall t\geq 0.
\]
\end{lemma}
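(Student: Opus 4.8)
The plan is to show that the region $\mathcal{S}^{\max}$ is positively invariant under the dynamics \eqref{dynS1-dynS2} for \emph{every} admissible control $u(\cdot)\in\mathcal{U}$; since $S^{0}$ belongs to $\mathcal{S}^{\max}$ (it is the point of the upper boundary obtained for $\tau=0$), this immediately yields $S^{u}(t)\in\mathcal{S}^{\max}$ for all $t\ge 0$. First I would observe that, by \textbf{H0}, the map $\tau\mapsto S_{1}^{\max}(\tau)$ solves $\dot S_{1}=-f(S_{1})$ with $f>0$ on $(0,\infty)$, hence it is strictly decreasing and thus a bijection onto its range. Consequently the upper boundary of $\mathcal{S}^{\max}$ is the graph of a well-defined function $S_{1}\mapsto S_{2}^{\max}$, and for any point whose abscissa $S_{1}^{u}(t)$ lies in that range there is a unique $\tau(t)$ with $S_{1}^{\max}(\tau(t))=S_{1}^{u}(t)$.

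The core of the argument is a scalar comparison along this reparametrisation. I introduce $w(t):=S_{2}^{\max}(\tau(t))-S_{2}^{u}(t)$, so that the inclusion $S^{u}(t)\in\mathcal{S}^{\max}$ is equivalent to $w(t)\ge 0$, and I start from $w(0)=0$. On any interval where $u(t)>0$, differentiating the identity $S_{1}^{\max}(\tau(t))=S_{1}^{u}(t)$ and using that the two abscissae coincide gives $\dot\tau=u$, whence, writing $S_{1}=S_{1}^{u}(t)$,
\[
\dot w=\mu(S_{2}^{u})(M-S_{1}-S_{2}^{u})-u\,\mu(S_{2}^{\max})(M-S_{1}-S_{2}^{\max}).
\]
In particular, at any instant where $w=0$ (so $S_{2}^{u}=S_{2}^{\max}=:S_{2}$) this collapses to $\dot w=(1-u)\,\mu(S_{2})(M-S_{1}-S_{2})\ge 0$, because $u\le 1$, $\mu\ge 0$ by \textbf{H1}, and $M-S_{1}-S_{2}>0$ on $\mathcal{D}$. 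On any interval where $u(t)=0$, the abscissa $S_{1}^{u}$ is frozen, hence $\tau(t)$ and $S_{2}^{\max}(\tau(t))$ are constant while $\dot S_{2}^{u}=-\mu(S_{2}^{u})(M-S_{1}-S_{2}^{u})\le 0$, so again $\dot w\ge 0$. Geometrically this is the statement that, on the curve $S^{\max}$ with outward normal $n=\big(f(S_{1})-\mu(S_{2})(M-S_{1}-S_{2}),\,f(S_{1})\big)$, one has $\langle(\dot S_{1},\dot S_{2}),n\rangle=f(S_{1})\,\mu(S_{2})(M-S_{1}-S_{2})\,(u-1)\le 0$, i.e. the field points into $\mathcal{S}^{\max}$ for every $u\in[0,1]$.

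To turn the sign condition $\{w=0\}\Rightarrow\{\dot w\ge 0\}$ into the global inequality $w\ge 0$, I would note that $S_{2}\mapsto\mu(S_{2})(M-S_{1}-S_{2})$ is locally Lipschitz and that the extra term $(1-u)\,\mu(S_{2})(M-S_{1}-S_{2})$ is non-negative, so that $\dot w\ge -L|w|$ for some locally integrable $L$; a one-sided Gronwall argument starting from $w(0)=0$ then forces $w(t)\ge 0$ for all $t$. The remaining boundary pieces of $\mathcal{S}^{\max}$ are harmless: the segment $\{S_{1}=0\}$ is invariant (as recalled after \eqref{dynS1-dynS2}), and on $\{S_{2}=0\}$ one has $\dot S_{2}=u\,f(S_{1})\ge 0$ since $\mu(0)=0$, so no trajectory can leave $\mathcal{S}^{\max}$ across them.

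The main obstacle I anticipate is purely technical rather than conceptual: since $u(\cdot)$ is only measurable, the trajectories are merely absolutely continuous, and the reparametrisation $\tau(t)$ must be shown to be well defined and absolutely continuous on the relevant $S_{1}$-range (which is guaranteed because $S_{1}^{u}$ is non-increasing from $S_{1}^{0}=S_{1}^{\max}(0)$ while $S_{1}^{\max}$ decreases continuously), the degenerate intervals $\{u=0\}$ where the $S_{1}$-parametrisation breaks down being handled separately as above. Care is also needed to carry out the comparison only up to the first time the target $\mathcal{T}$ is reached, where the trajectories remain defined.
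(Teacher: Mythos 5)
Your proposal is correct and rests on exactly the same key computation as the paper's proof: your inner product $\langle (\dot S_{1},\dot S_{2}),n\rangle = f(S_{1})\,\mu(S_{2})(M-S_{1}-S_{2})(u-1)\le 0$ along the $u=1$ trajectory is precisely the paper's cross product $(\dot S_{1}^{\max},\dot S_{2}^{\max})\wedge(\dot S_{1}^{u},\dot S_{2}^{u})=\mu(S_{2})(M-S_{1}-S_{2})f(S_{1})(1-u)\ge 0$, combined with the same observation that $\dot S_{1}^{u}\le 0$ blocks exit through the vertical segment $\{S_{1}^{\max}(0)\}\times[0,S_{2}^{\max}(0))$. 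The only difference is one of rigor rather than route: where the paper directly concludes that an inward-pointing field prevents the trajectory from leaving $\mathcal{S}^{\max}$, you make this explicit by parametrizing the upper boundary by $S_{1}$ (via $\dot\tau=u$) and closing the argument with a one-sided Gronwall inequality on $w$, which is a sound way of filling in the step the paper leaves implicit.
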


\proof
Let $\tau \geq 0$ be given, and consider the point $(S_1^{\max}(\tau),S_2^{\max}(\tau))\in \mathcal{S}^{\max}$. Then the cross product of 
$(\dot{S}_1^{\max}(\tau),\dot{S}_2^{\max}(\tau))$ with $(S_1^{u}(\tau),S_2^{u}(\tau))$ at $(S_1^{\max}(\tau),S_2^{\max}(\tau))$ satisfies:
$$
(\dot{S}_1^{\max}(\tau),\dot{S}_2^{\max}(\tau)) \wedge (S_1^{u}(\tau),S_2^{u}(\tau)) =
\mu(S_{2})(M-S_{1}-S_{2})f(S_{1})(1-u) \geq 0.
$$
Moreover, at a given point on the segment $\{S_1^{\max}(0)\}\times [0,S_2^{\max}(0))$, one has $\dot{S}^u_1\leq 0$ for any control $u(\cdot)$.
Therefore, a trajectory cannot leave the set $\mathcal{S}^{\max}$ on its
boundary $(S_1^{\max}(\cdot),S_2^{\max}(\cdot))$ and $\{S_1^{\max}(0)\}\times [0,S_2^{\max}(0))$.
\qed

%% \textcolor{red}{VERSION ALAIN MODIFIEE PAR TERENCE
%% An inward normal of the set ${\cal S}^{\max}$ at a state
%% $S^{max}(\tau)$ is
%% \[
%% \nu=\left[\begin{array}{c}
%% -\dot S_{2}^{\max}(\tau)\\[2mm]
%% \dot S_{1}^{\max}(\tau)
%% \end{array}\right],{\color{blue}(\quad\forall \tau>0 ?).}
%% \]
%% For any control $Q$, at $S=S^{max}(\tau)$, one has
%% \[
%% \nu^{t}\dot S^{u}=
%% \mu(S_{2})(M-S_{1}-S_{2})f(S_{1})(\gamma(1)-u)
%% \geq 0
%% \]
%% We conclude that the domain ${\cal S}^{\max}$ is invariant for any
%% control $u(\cdot)$. }

\section{Pontryagin's Principle and domain partition}
\label{sec-opti1}
We use the Pontryagin Maximum Principle \cite{PBGM64} in order to derive necessary conditions on optimal trajectories. 
The Hamiltonian $H=H(S_{1},S_{2},\lambda_{0},\lambda_{1},\lambda_{2},Q)$ associated to the control system is defined as :
\begin{equation}
\label{Hamiltonian}
H(S_{1},S_{2},\lambda_{0},\lambda_{1},\lambda_{2},Q):=\lambda_{0}
+u(\lambda_{2}-\lambda_{1})f(S_{1})-\lambda_{2}\mu(S_{2})(M-S_{1}-S_{2}).
\end{equation} 
The Pontryagin Maximum Principle can be stated as follows. Let $u(\cdot)$ an optimal control steering a point $(S_1^0,S_2^0)$ 
in minimal time to the target, and $S=(S_1,S_2)$ the associated trajectory. 
Then, there exists $t_f>0$, $\lambda_0\geq 0$, and an absolutely continuous map 
$\lambda=(\lambda_1,\lambda_2):[0,t_f] \rightarrow \R^2$ such that $(\lambda_0,\lambda_1(\cdot),\lambda_2(\cdot))\not=0$ 
%to be minimized w.r.t. $Q \in [0,1]$ and equal to zero along any optimal
%trajectory. {\color{blue}The constant} $\lambda_{0}$ is positive (equal to $0$ for
%abnormal extremals) and $\lambda=(\lambda_{1},\lambda_{2})$ is a
and :
\begin{equation}{\label{adjoint1-adjoint2}}
\begin{cases}
\dot \lambda_{1}  =  -\partial H/\partial S_1 =
-u(\lambda_{2}-\lambda_{1})f^{\prime}(S_{1})-\lambda_{2}\mu(S_{2}),\\
\dot \lambda_{2}  =  -\partial H/\partial S_2 = \lambda_{2}(\mu^{\prime}(S_{2})(M-S_{1}-S_{2})-\mu(S_{2})).
\end{cases}
\end{equation}
for a.e. $t\in [0,t_f]$. Moreover, the Hamiltonian is minimized w.r.t. the control $u$ which means: 
\begin{equation}{\label{PMP}}
u(t) \in \mathrm{arg} \ \mathrm{min}_{\alpha \in [0,1]}H(S_{1}(t),S_{2}(t),\lambda_{0},\lambda_{1}(t),\lambda_{2}(t),\alpha), \; \; \mathrm{a.e.} \; t\in [0,t_f].
\end{equation}
We call extremal trajectory a triple $(S(\cdot),\lambda(\cdot),u(\cdot))$ satisfying \eqref{dynS1-dynS2}-\eqref{adjoint1-adjoint2}-\eqref{PMP}.
When $\lambda_0=0$, then we say that an extremal is {\it{abnormal}} whereas if $\lambda_0\not=0$, then we say that an extremal is {\it{normal}}.
Abnormal trajectories are studied in Corollary \ref{abnormal-coro}.
As $t_f$ is free, $H$ is equal to zero along any extremal trajectory.
Taking into account the geometry of the target set $\mathcal{T}$, we obtain the {\it{transversality conditions}}:
\begin{equation}
\label{transversal}
\lambda(t_{f})=\left|\begin{array}{ll}
(1,0) & \mbox{ if } S_{1}(t_{f})=\ul S_{1} \mbox{ and } 
S_{2}(t_{f})<\ul S_{2},\\
(\alpha,(1-\alpha)) & \mbox{ if } S_{1}(t_{f})=\ul S_{1} \mbox{ and } 
S_{2}(t_{f})=\ul S_{2}, \qquad (\mbox{with } \alpha \in [0,1]),\\
(0,1) & \mbox{ if } S_{1}(t_{f})<\ul S_{1} \mbox{ and } 
S_{2}(t_{f})=\ul S_{2}.
\end{array}\right.
\end{equation}
The switching function $\phi$ defined as $\phi:=\lambda_{2}-\lambda_{1}$ provides the control law.
An optimal control $u$ satisfies:
$$
\begin{cases}
\begin{array}{lll}
\phi(t)>0 & \Rightarrow & u(t)=0,\\ 
\hspace{-0.15cm}\phi(t)=0 & \Rightarrow & u(t)\in [0,1],\\
\phi(t)<0 & \Rightarrow & u(t)=1.
\end{array}
\end{cases}
$$
We say that a time $t_0\in [0,t_f]$ is a {\it{switching point}} (or switching time) if the control $u$ is non-constant in any neighborhood of $t_0$. 
In this case, one has $\phi(t_0)=0$, and we say that $\phi$ switches at time $t_0$. 
We say that an extremal trajectory has a {\it{singular arc}} if there exists a time interval $I:=[t_1,t_2] \subset [0,t_f]$ such that we have  
$\phi(t)=0$ for any time $t \in I$ (see \cite{BC02}). We then have $\phi=\dot{\phi}=0$ on $I$.
Moreover, one can easily check that
$\phi=0$ implies
$\dot\phi=\lambda_{2}\mu^{\prime}(S_{2})(M-S_{1}-S_{2})=0$. 
As $\phi$ and $\lambda_{2}$  cannot be equal to zero
simultaneously, we must have $\mu'(S_2)=0$ along the singular arc. Therefore, the singular locus is defined as the set
$$
\Delta:=(0,M-S_2^{\star})\times \{S^{\star}_{2}\},
$$ 
If $S^{\star}_{2}\geq M$, then the singular arc no longer exists.
When $S^{\star}_{2}< M$, we define the {\em singular} feedback control as:
\begin{equation}
\label{singular_feedback}
u_s(S_{1}):=\frac{\mu(S_{2}^{\star})(M-S_{1}-S_{2}^{\star})}{f(S_{1})}
, \; S_{1}>0 \ .
\end{equation}
Under Assumption {\bf{H0}}, the map $S_{1}\mapsto u_s(S_{1})$ is
decreasing with $u_s(0^{+})=+\infty$. The function
\begin{equation}
\label{nu}
\nu(S_{1}):=f(S_{1})-\mu(S_{2}^{\star})(M-S_{1}-S_{2}^{\star}),
\end{equation}
being increasing and such
that $\nu(0)<0$ and $\nu(M-S_{2}^{\star})>0$, one can then define
$S_{1}^{\min}$ as the unique root of $\nu(\cdot)$ on the interval
$(0,M-S_{2}^{\star})$. The number $S_{1}^{\min}$ defines the left
point limit of the admissible subset of the singular arc
$S_{2}=S_{2}^{\star}$ as one can easily check from (\ref{dynS1-dynS2})
that the following property is fulfilled
\[
\begin{array}{lll}
S_{1}\geq S_{1}^{\min}, \; S_{2}=S_{2}^{\star} & \Rightarrow &
u_s(S_{1})\leq 1 \mbox{ and } \dot S_{2}=0 \mbox{ for }
u=u_s(S_{1}), \  \\
S_{1} < S_{1}^{\min}, \; S_{2}=S_{2}^{\star} & \Rightarrow & \dot
S_{2}<0, \; \forall u \in [0,1]. \ 
\end{array}
\]

The point $(S_1^{\min},S_2^{\star})$ is called {\it{saturation point}}. Following \cite{bosc}, the part of the singular arc where $u_s$ is strictly larger than the maximal admissible value is called {\it{barrier}}, i.e. the 
singular control saturates the value $u=1$. This phenomena and its consequence on the optimal synthesis 
are studied precisely in Section \ref{sec-opti2}.

\bigskip

It is convenient for the characterization of the optimal synthesis to consider the following partition of the domain ${\cal D}\setminus {\cal T}$:
\[
{\cal Z}_{0}:=\left\{ (S_{1},S_{2})\in {\cal D}\setminus {\cal T} \; ; \;
S_{1}\leq \ul S_{1}, \; S_{2}>\ul S_{2} \right\},
\]
\[
{\cal Z}_{1}:=\left\{ (S_{1},S_{2})\in {\cal D}\setminus {\cal T} \; ; \;
S_{1} \in (\ul S_{1},\ul \sigma_{1}], \; S_{2}\leq \sigma_{2}(S_{1}) \right\},
\]
\[
{\cal Z}_{s}:=({\cal D}\setminus {\cal T})\setminus({\cal Z}_{0}\cup{\cal Z}_{1}),
\]
where $\sigma_{2}(\cdot)$ is solution of the differential equation
\[
\frac{d\sigma_{2}}{d\sigma_{1}}=
\frac{\mu(\sigma_{2})(M-\sigma_{1}-\sigma_{2})}{f(\sigma_{1})}-1
%\left|\begin{array}{ll}
%\ds \frac{\mu(\sigma_{2})(M-\sigma_{1}-\sigma_{2})}{f(\sigma_{1})}-1
%& \mbox{when } \sigma_{2} < M-\sigma_{1},\\[4mm]
%-1 & \mbox{when } \sigma_{2} = M-\sigma_{1},
%\end{array}\right.
\]
for the Cauchy problem with initial condition $\sigma_{2}(\ul
S_{1})=\ul S_{2}$ on the interval $[\ul S_{1},\ul \sigma_{1}]$, where
$\ul \sigma_{1}$ is the smallest $\sigma_{1}>\ul S_{1}$ such that
$\sigma_{2}(\ul \sigma_{1})=0$.\\

\begin{remark}
It is worth pointing out that $\sigma_1\longmapsto \sigma_2(\sigma_1)$ 
is the unique solution of \eqref{dynS1-dynS2} backward in time 
starting at $(\ul S_1,\ul S_2)$ with the control $u=1$.
Since the set $N$ is invariant and $(\ul S_1, \ul S_2) \in \mathcal{D}$, this trajectory necessarily intersects the line segment $(0,M)\times \{0\}$.
%trajectory starting at $(\ul S_1,\ul S_2)$ associated to
%the control $u=1$. Moreover, as the set ${\cal
%  M}:=\{(S_1,S_2)\in{\cal D}:S_1+S_2=M\}$ is flow invariant for any
%control $Q$, one has that $\frac{d \sigma_2}{d \sigma_1}=-1$ if and
%only if $(\ul S_1,\ul S_2)\in{\cal M}$. Figure \ref{fig_Z} shows the
%\rouge{two different possible shapes of the set} $\mathcal{Z}$ 
%when $(\ul S_1,\ul S_2)\notin{\cal M}$. \rouge{The particular case of
%  an empty domain  ${\cal Z}_{s}$ corresponds to $(\ul S_1,\ul
%  S_2)\in{\cal M}$, that is the absence of biomass in the system.}
%\end{remark}
\end{remark}

\begin{figure}[h]
\begin{center}
\includegraphics[scale=0.53]{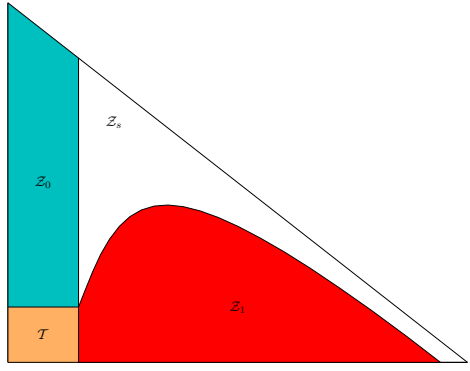}
\caption{Illustration of the subsets ${\cal T}$, $\mathcal{Z}_0$, $\mathcal{Z}_1$ and
  $\mathcal{Z}_s$. %with different geometry of $\mathcal{Z}_1$.
   \label{fig_Z}}
\end{center}
\end{figure}

\section{Characterization of optimal solutions with constant control
  and corollaries}
\label{sec-constant}

We first characterize optimal trajectories in the subset ${\cal Z}_{1}$.

\begin{proposition} 
\label{proposition1}
Assume that Hypotheses {\bf{H0}} and {\bf{H1}} are fulfilled.
For any initial condition in ${\cal Z}_{1}$, the optimal
trajectory stays in ${\cal Z}_{1}$ and the constant control $u=1$
is optimal control until reaching the target.
\end{proposition}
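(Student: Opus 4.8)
The plan is to prove optimality of the constant control $u\equiv 1$ in ${\cal Z}_1$ by a direct comparison argument that reduces the minimal-time problem to the one-dimensional problem of driving $S_1$ down to $\underline S_1$. The key structural fact, recorded in the Remark, is that the graph $S_2=\sigma_2(S_1)$ is precisely the trajectory of \eqref{dynS1-dynS2} generated by $u=1$ and passing through the corner $(\underline S_1,\underline S_2)$. I would exploit this twice: once to show that ${\cal Z}_1$ is invariant under $u=1$ and is exited only into the target, and once (in scalar form) to show that $u=1$ is time-optimal for lowering $S_1$.

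First I would fix $S^0\in{\cal Z}_1$ and analyse the solution $S^{\max}(\cdot)$ issued from $S^0$ with $u=1$. Since $\dot S_1=-f(S_1)<0$ for $S_1>0$, the component $S_1(\cdot)$ is strictly decreasing, so the trajectory is a graph $S_2=s(S_1)$ solving the same scalar ODE $\frac{dS_2}{dS_1}=-1+\frac{\mu(S_2)(M-S_1-S_2)}{f(S_1)}$ that defines $\sigma_2$. By uniqueness for this scalar equation the two graphs cannot cross, and since $s(S_1^0)=S_2^0\le\sigma_2(S_1^0)$ we get $s(S_1)\le\sigma_2(S_1)$ for all $S_1\in[\underline S_1,S_1^0]$. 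On the lower edge $\{S_2=0\}$ one has $\dot S_2=f(S_1)>0$, so $S_2(\cdot)$ stays non-negative. Hence the trajectory remains in ${\cal Z}_1$ while $S_1>\underline S_1$, and when $S_1$ first reaches $\underline S_1$ (in finite time, since $\dot S_1\le-f(\underline S_1)<0$ there) one has $S_2\le\sigma_2(\underline S_1)=\underline S_2$, i.e. the state enters ${\cal T}$. Denote this arrival time by $T^\star$.

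Next I would compare the scalar dynamics of $S_1$. For any admissible $u(\cdot)$, the relation $\dot S_1=-u f(S_1)\ge -f(S_1)$ (as $u\le 1$, $f\ge 0$) gives, by the comparison principle (equivalently, by separating variables and using $u\le 1$), that $S_1^{u}$ cannot reach $\underline S_1$ before $S_1^{\max}$ does; writing $T_1(u)$ for the first time $S_1^u$ hits $\underline S_1$, this reads $T_1(u)\ge T_1(1)=T^\star$. Since $S_1(\cdot)$ is non-increasing, membership in ${\cal T}$ forces $S_1\le\underline S_1$, so the target time $T(u)$ obeys $T(u)\ge T_1(u)\ge T^\star$. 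As $u=1$ attains ${\cal T}$ exactly at $T^\star$ by the previous step, it is optimal, and its trajectory remains in ${\cal Z}_1$ up to the target.

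The step I expect to be the main obstacle is the first one: one must exclude the trajectory leaving ${\cal Z}_1$ through any boundary other than the target edge $\{S_1=\underline S_1,\ S_2\le\underline S_2\}$. This is exactly where the non-crossing property of $u=1$ trajectories (via the Remark identifying $\sigma_2$ as one of them) and the sign of $\dot S_2$ on $\{S_2=0\}$ are essential; once invariance and the characterization ``arrival $\iff$ $S_1=\underline S_1$'' are secured, the comparison step is routine. One could instead argue through the Pontryagin Maximum Principle by verifying $\phi\le 0$ along the candidate trajectory using the transversality condition \eqref{transversal} on the exit edge, but the comparison argument is shorter and avoids integrating the adjoint system.
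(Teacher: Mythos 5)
Your proof is correct, but it follows a genuinely different route from the paper. The paper's proof is dual/PMP-based: it first invokes Lemma \ref{Lemma1} to show that ${\cal Z}_{1}$ is invariant under \emph{every} admissible control, so any optimal trajectory must exit through the edge $\{S_{1}=\underline S_{1},\,S_{2}\le \underline S_{2}\}$; it then uses the transversality condition \eqref{transversal} (which gives $\lambda_{2}(t_{f})=0$ when $S_{2}(t_{f})<\underline S_{2}$), the linearity of the adjoint equation (forcing $\lambda_{2}\equiv 0$), and the resulting sign analysis $\dot\phi=-uf^{\prime}(S_{1})\phi$ with $\phi(t_{f})=-1$ to conclude $\phi<0$, hence $u=1$, with a separate argument for initial data on the graph of $\sigma_{2}(\cdot)$. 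You instead argue entirely on the primal side: you verify that the $u=1$ trajectory stays in ${\cal Z}_{1}$ (non-crossing of $\sigma_{2}$ via uniqueness for the scalar ODE in the variable $S_{1}$, plus $\dot S_{2}>0$ on $\{S_{2}=0\}$) and reaches ${\cal T}$ exactly when $S_{1}$ first hits $\underline S_{1}$, and then use the one-dimensional comparison $\dot S_{1}=-uf(S_{1})\ge -f(S_{1})$ to show no admissible control can reach $\{S_{1}\le \underline S_{1}\}$ — a necessary condition for membership in ${\cal T}$ — any sooner. Your approach buys two things: it avoids the adjoint system altogether, and, unlike the PMP argument (which only characterizes optimal controls \emph{assuming} one exists), it directly exhibits a lower bound $T^{\star}$ on all admissible hitting times together with a control attaining it, so no separate existence theorem is needed. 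What the paper's approach buys is material reused later in the synthesis: the invariance of ${\cal Z}_{1}$ under \emph{arbitrary} controls (your argument only establishes it for the $u=1$ trajectory, which suffices for the statement as worded) and the sign information on $\phi$ and $\lambda_{2}$ that Corollaries \ref{corollary2} and \ref{corollary3} build on. Both proofs rely on the same implicit regularity (Lipschitz $f$, $\mu$ ensuring uniqueness of Cauchy problems), so that is not a gap relative to the paper's own standards.
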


\proof
From any initial condition $S^{0}\in {\cal D}\setminus {\cal T}$ that
belongs to the graph of the function $\sigma_{2}(\cdot)$, and any control
$u(\cdot)$, Lemma \ref{Lemma1} shows that the trajectory stays in the
set ${\cal S}^{\max}$, whose boundary is contained in the boundary of 
${\cal Z}_{1}$.
So we conclude that the domain ${\cal Z}_{1}$ is invariant for any
control $u(\cdot)$. Consequently, any optimal trajectory in this domain
reaches the target at a time $t_{f}$ such that
$S_{1}(t_{f})=\ul S_{1}$ and $S_{2}(t_{f})\leq\ul S_{2}$.

For initial conditions such that $S_{2}(0)<\sigma_{2}(S_{1}(0))$, the
target is reached at states such that  $S_{2}(t_{f})<\ul S_{2}$.
Then, the transversality conditions (\ref{transversal}) give $\lambda_{2}(t_{f})=0$.
This last equality implies with the dynamics
(\ref{adjoint1-adjoint2}) that the variable $\lambda_{2}(t)$ is equal to $0$ for any time $t\in
[0,t_{f}]$. Then, the switching function is such that $\dot
\phi=-uf^{\prime}(S_{1})\phi \geq 0$ with $\phi(t_{f})=-1$. So $\phi$ stays
negative for any time and the optimal control is  constant
equal to $1$.

For initial conditions such that $S_{2}(0)=\sigma_{2}(S_{1}(0))$, the
optimal trajectory has to reach the target with $S_{2}(t_{f})=\ul
S_{2}$, and thus the constant control $u=1$ is optimal. 
Otherwise, as for the previous case, one should have
$\lambda_{2}(t)=0$ and $\phi(t)<0$ for any $t$, that is the optimal
control has to be constant equal to $1$, that implies
$S_{2}(t_{f})=\ul S_{2}$.
\qed\\

One can then formulate the following corollaries concerning the optimal
trajectories that lie outside the set ${\cal Z}_{1}$.

\begin{corollary}
\label{corollary2}{\label{abnormal-coro}}
From any initial condition in $({\cal D}\setminus {\cal T})\setminus
{\cal Z}_{1}$, an optimal trajectory reaches the target at a time
$t_{f}$ such that $S_{2}(t_{f})=\ul S_{2}$, and the constant control
$u=1$ cannot be optimal. Furthermore, the variable
$\lambda_{2}$ is positive along such optimal trajectories, and $\lambda_{0}>0$.
\end{corollary}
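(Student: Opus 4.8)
The plan is to establish the three assertions in the stated order, each feeding the next, using throughout two structural facts: the (minimized) Hamiltonian vanishes identically along extremals since $t_f$ is free, and the $\lambda_2$–equation in \eqref{adjoint1-adjoint2} is linear and homogeneous, so $\lambda_2$ keeps a constant sign and $\lambda_2(t_f)=0$ forces $\lambda_2\equiv 0$. For $S_2(t_f)=\ul S_2$, I would argue by elimination over the three terminal cases of \eqref{transversal}, the only one to exclude being $S_1(t_f)=\ul S_1$, $S_2(t_f)<\ul S_2$. There \eqref{transversal} gives $\lambda(t_f)=(1,0)$, hence $\lambda_2\equiv 0$; then $\phi=-\lambda_1$ obeys $\dot\phi=uf'(S_1)\phi$ with $\phi(t_f)=-1$, so $\phi<0$ and $u\equiv 1$ on $[0,t_f]$. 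Reading this $u=1$ trajectory backwards from $(\ul S_1,c)$ with $c<\ul S_2$, non-crossing of solutions with $\sigma_2(\cdot)$ (the backward $u=1$ trajectory through $(\ul S_1,\ul S_2)$) places it strictly below $\sigma_2$ with $S_1\in(\ul S_1,\ul\sigma_1]$, i.e. $S^0\in{\cal Z}_1$, contradicting $S^0\in({\cal D}\setminus{\cal T})\setminus{\cal Z}_1$. The two remaining cases both yield $S_2(t_f)=\ul S_2$.

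For the second assertion, I would suppose $u\equiv 1$ optimal and locate its terminal point. The right edge forces $S^0\in{\cal Z}_1$ exactly as above, and the corner is reached under $u=1$ only along the backward trajectory $\sigma_2\subset{\cal Z}_1$; both are excluded, so the terminal point lies on the top edge with $S_1(t_f)<\ul S_1$, $S_2(t_f)=\ul S_2$. Then $\lambda(t_f)=(0,1)$ gives $\phi(t_f)=1>0$, hence by continuity $\phi>0$ on a left neighborhood of $t_f$, forcing $u=0$ there and contradicting $u\equiv 1$.

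For the third assertion I would first obtain $\lambda_2>0$: by the first assertion $\lambda(t_f)\in\{(0,1),(\alpha,1-\alpha)\}$ with $\alpha\in[0,1]$; if $\alpha=1$ then $\lambda_2(t_f)=0$, so $\lambda_2\equiv 0$ and $\phi<0$, $u\equiv 1$ throughout, which is impossible by the second assertion. Hence $\lambda_2(t_f)>0$ in every case and, by sign propagation, $\lambda_2>0$ on $[0,t_f]$. For $\lambda_0>0$, I would assume $\lambda_0=0$ and evaluate the vanishing minimized Hamiltonian at $t_f$: since $\min_{u}u\,\phi(t_f)f(S_1(t_f))\le 0$ while $\lambda_2(t_f)\mu(\ul S_2)(M-S_1(t_f)-\ul S_2)>0$ (using $\mu(\ul S_2)>0$ as $\ul S_2>0$, $M-S_1(t_f)-\ul S_2>0$ as $(\ul S_1,\ul S_2)\in{\cal D}$, and $\lambda_2(t_f)>0$), one gets $H(t_f)<0\ne 0$, a contradiction.

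I expect the main obstacle to be the exact matching, in the first two assertions, between ``reaching the right edge or the corner under $u=1$'' and membership in ${\cal Z}_1$. This rests on the non-crossing of the backward $u=1$ foliation together with the Remark's observation that $\sigma_2$ is precisely the backward $u=1$ trajectory issued from $(\ul S_1,\ul S_2)$ and reaching $S_2=0$ at $\ul\sigma_1$; care is needed to check that a trajectory lying strictly below $\sigma_2$ meets $S_2=0$ at some $S_1\le\ul\sigma_1$, so that its $S_1$–range stays within $(\ul S_1,\ul\sigma_1]$. Everything else is sign bookkeeping on $\phi$, $\lambda_2$ and $H$.
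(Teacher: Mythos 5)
Your proposal is correct, and for the last three claims it follows essentially the paper's own proof: non-crossing of constant-$u=1$ trajectories with $\sigma_2(\cdot)$ plus the transversality value $\lambda(t_f)=(0,1)$ to exclude the constant control $u=1$; homogeneity of the $\lambda_2$-equation plus transversality to get $\lambda_2>0$; and $H\equiv 0$ together with $\lambda_2>0$ to exclude $\lambda_0=0$ (the paper runs this last contradiction at every time, you run it only at $t_f$ --- an immaterial difference). Where you genuinely diverge is the first claim, $S_2(t_f)=\ul S_2$. The paper argues via the principle of optimality: an optimal trajectory ending on the right edge of the target must first cross the graph of $\sigma_2(\cdot)$, and Proposition \ref{proposition1}, applied to the tail of the trajectory from that crossing point, forces the endpoint to be the corner $(\ul S_1,\ul S_2)$, a contradiction. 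You instead stay entirely inside the Pontryagin framework: the terminal case $S_1(t_f)=\ul S_1$, $S_2(t_f)<\ul S_2$ gives $\lambda(t_f)=(1,0)$, hence $\lambda_2\equiv 0$ by homogeneity of its adjoint equation, hence $\phi=-\lambda_1<0$ throughout and $u\equiv 1$; the non-crossing argument then confines the whole trajectory, and in particular the initial point, to $\mathcal{Z}_1$, a contradiction. Your route buys a more unified proof (the same confinement lemma settles both the first and the second claims) and avoids invoking the dynamic-programming content of Proposition \ref{proposition1}; the paper's route is shorter given that Proposition \ref{proposition1} is already available, and needs no adjoint analysis in this step. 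You also correctly isolated the one delicate point of the confinement argument: the backward $u=1$ trajectory issued from $(\ul S_1,c)$ with $c<\ul S_2$, being trapped between $S_2=0$ and the graph of $\sigma_2(\cdot)$, which vanishes at $\ul\sigma_1$, has its $S_1$-range inside $(\ul S_1,\ul\sigma_1]$, so the initial condition does lie in $\mathcal{Z}_1$. (Incidentally, your sign $\dot\phi=uf'(S_1)\phi$ when $\lambda_2\equiv 0$ is the correct one with the paper's conventions; the proof of Proposition \ref{proposition1} writes $\dot\phi=-uf'(S_1)\phi$, but either way only the preservation of the sign of $\phi$ is used.)
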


\proof
If an optimal trajectory reaches the target with
$S_{2}(t_{f})<\ul S_{2}$ from an initial condition
outside the set ${\cal Z}_{1}$, it has to cross the graph of the
function $\sigma_{2}(\cdot)$ before reaching the target. 
According to Proposition \ref{proposition1}, the optimal trajectory from this
boundary of ${\cal Z}_{1}$ reaches the target at the corner state $(\ul S_{1},\ul
S_{2})$, that is such that $S_{2}(t_{f})=\ul S_{2}$.

By uniqueness of the solution of the Cauchy problem with the constant
control $1$, a trajectory with an initial condition
outside the set ${\cal Z}_{1}$ cannot cross the graph of the function
$\sigma_{2}(\cdot)$ and consequently one has $S_{1}(t_{f})<\ul S_{1}$
when it reaches the target. Then from the transversality condition gives
$\phi(t_{f})>0$ and $u=1$ cannot be optimal.

From equation (\ref{adjoint1-adjoint2}), one can see that $\lambda_{2}$ is
either identically equal to zero or has constant sign. If
$\lambda_{2}$ is equal to zero, one should have $\lambda_{1}(t_{f})>0$
from conditions (\ref{transversal}), and the optimal trajectory has to
reach the target with $u=1$, that is not possible from
the above argumentation. 

Finally, the transversality condition (\ref{transversal}) provides the positive
sign of $\lambda_{2}$. Having $\lambda_{0}=0$ would imply, from
$H=0$ along any optimal trajectory (where the Hamiltonian $H$ is
given in (\ref{Hamiltonian})), to have the equality
\[
u\phi f(S_{1})=\lambda_{2}\mu(S_{2})(M-S_{1}-S_{2}),
\]
fulfilled at any time, that is to $u>0$ and $\phi>0$ at
any time. This is impossible for reaching the target.\qed

\begin{corollary}
\label{corollary3}
Consider an optimal trajectory and a time $t_0$ such that
$(S_{1}(t_0),S_{2}(t_0))\notin {\cal T}\cup{\cal Z}_{1}$. Then, the following properties are fulfilled:
\begin{enumerate}[label=\roman{*}.]
\item If $\phi(t_0)\geq 0$ with $S_{2}(t_0)<S_{2}^{\star}$, then we have 
$\phi(t)>0$ for any time $t>t_0$ such that $(S_{1}(t),S_{2}(t))\notin{\cal T}\cup{\cal Z}_{1}$. 
%Moreover, the control 
%$u=0$ is optimal  
%and $u(t^{\prime})=0$ is optimal for any $t^{\prime}>t$
%such that $(S_{1}(\tau),S_{2}(\tau))\notin{\cal T}\cup{\cal Z}_{1}$
%for all $\tau \in [t,t^{\prime})$.
\item If $\phi(t_0)\leq 0$ with $S_{2}(t_0)>S_{2}^{\star}$, then we have 
$\phi(t)<0$ for any time $t>t_0$ such that $(S_{1}(t),S_{2}(t))\notin {\cal T}$ and
$S_{2}(t)\geq S_{2}^{\star}$.
%$\phi(t^{\prime})<0$ and $u(t^{\prime})=1$ is optimal for any
%$t^{\prime}>t$ such that $(S_{1}(\tau),S_{2}(\tau))\notin {\cal T}$ and
%$S_{2}(\tau)\geq S_{2}^{\star}$ for all $\tau \in [t,t^{\prime})$.
\end{enumerate}
\end{corollary}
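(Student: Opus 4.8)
The plan is to let everything flow from the evolution of the switching function $\phi=\lambda_2-\lambda_1$. Differentiating along an extremal and substituting the adjoint equations \eqref{adjoint1-adjoint2}, I would first record the identity
\[
\dot\phi = u\,f'(S_1)\,\phi + \lambda_2\,\mu'(S_2)\,(M-S_1-S_2),
\]
which is the computation already foreshadowed before the definition of $\Delta$ (the terms $\pm\lambda_2\mu(S_2)$ cancel). The virtue of this form is that the first summand is \emph{linear} in $\phi$ while the second is a forcing term I can sign. Introducing the nonnegative primitive $A(t):=\int_{t_0}^t u\,f'(S_1)\,ds$ and setting $\psi:=e^{-A}\phi$, one gets $\dot\psi=e^{-A}\lambda_2\mu'(S_2)(M-S_1-S_2)$, so $\psi$ is monotone with the sign of its derivative equal to that of $\mu'(S_2)$, and $\mathrm{sign}\,\phi=\mathrm{sign}\,\psi$ since $e^{-A}>0$.

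Next I would pin down the signs of the factors in the forcing term. Since ${\cal Z}_1$ is forward-invariant (Proposition \ref{proposition1}) and $(S_1(t_0),S_2(t_0))\notin{\cal Z}_1$, the initial condition of the trajectory — which lies in ${\cal D}\setminus{\cal T}$ — cannot belong to ${\cal Z}_1$ either; hence it lies in $({\cal D}\setminus{\cal T})\setminus{\cal Z}_1$ and Corollary \ref{corollary2} yields $\lambda_2>0$ all along. On ${\cal D}$ one has $M-S_1-S_2>0$ and, by {\bf H0}, $f'(S_1)>0$, so the forcing term has exactly the sign of $\mu'(S_2)$: positive for $S_2<S_2^\star$ and negative for $S_2>S_2^\star$ by {\bf H1}. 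Thus $\psi$ is nondecreasing while $S_2<S_2^\star$ and nonincreasing while $S_2>S_2^\star$.

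For assertion (i) I would run a bootstrap argument. On the region $\{S_2<S_2^\star\}$ we have $\dot\psi\ge 0$, and because $S_2(t_0)<S_2^\star$ gives $\mu'(S_2(t_0))>0$, we get $\psi(t)>\psi(t_0)=\phi(t_0)\ge 0$ for $t>t_0$, i.e. $\phi(t)>0$. But $\phi>0$ forces $u=0$ through the minimization \eqref{PMP}, and with $u=0$ one has $\dot S_2=-\mu(S_2)(M-S_1-S_2)<0$, so $S_2$ is strictly decreasing and stays below $S_2^\star$. This closes the loop: the set $\{S_2<S_2^\star\}$ is never left while the trajectory remains outside ${\cal T}\cup{\cal Z}_1$, hence $\phi(t)>0$ persists there, which is the claim.

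Assertion (ii) is the mirror image on $\{S_2>S_2^\star\}$, where $\dot\psi\le 0$ and $\psi$ decreases from $\psi(t_0)=\phi(t_0)\le 0$, strictly at $t_0$ since $S_2(t_0)>S_2^\star$; thus $\phi(t)<0$ on the maximal interval on which $S_2\ge S_2^\star$. The \textbf{main obstacle}, and the reason (ii) is more delicate than (i), is that now $\phi<0$ forces $u=1$ rather than $u=0$, so the sign of $\dot S_2$ is not fixed and it is not automatic either that the trajectory stays in $\{S_2\ge S_2^\star\}$ or that the conclusion covers times where $S_2$ dipped below $S_2^\star$ and returned. I would dispose of this by examining a downward crossing of $S_2=S_2^\star$: there $\dot S_2\le 0$ with $u=1$ reads $\nu(S_1)\le 0$, i.e. $S_1\le S_1^{\min}$ for the increasing function $\nu$ of \eqref{nu}; since $u=1$ makes $S_1$ strictly decreasing, $S_1$ only moves further below $S_1^{\min}$, so an upward re-crossing (which needs $\nu(S_1)\ge 0$) is impossible while $\phi$ stays negative. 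The sole alternative is that $\phi$ returns to $0$ while $S_2<S_2^\star$, but then assertion (i) applies and forces $u=0$ with $S_2$ strictly decreasing afterwards, again precluding any return. Hence, before reaching the target, $S_2$ never re-enters $\{S_2\ge S_2^\star\}$ once it has left it, and $\phi(t)<0$ holds at every time with $S_2(t)\ge S_2^\star$, as claimed.
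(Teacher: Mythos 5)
Your proof is correct, and its engine is the same as the paper's: the identity $\dot\phi = u f'(S_1)\phi + \lambda_2\mu'(S_2)(M-S_1-S_2)$, the positivity of $\lambda_2$ supplied by Corollary \ref{corollary2} (your preliminary remark that the trajectory's initial condition cannot lie in ${\cal Z}_1$, by the invariance from Proposition \ref{proposition1}, is the right justification for invoking it), and the feedback loop between the sign of $\phi$ and the monotonicity of $S_2$. The packaging differs: the paper argues by contradiction at the first zero of $\phi$, where $\dot\phi$ reduces to the forcing term and has the wrong sign, whereas you factor out the linear part with the integrating factor $e^{-A}$ and read the conclusion off the monotonicity of $\psi = e^{-A}\phi$; these are interchangeable, and your version avoids having to select a first crossing time. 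Where you genuinely add something is part ii. The paper disposes of it with ``the proof of ii follows in the same way,'' which literally only yields $\phi<0$ on the maximal interval containing $t_0$ on which $S_2\geq S_2^\star$, while the statement covers \emph{all} later times with $S_2(t)\geq S_2^\star$, including hypothetical returns after a dip below the singular level. Your closing argument rules such returns out cleanly: at a downward crossing with $u=1$ one has $\nu(S_1)\leq 0$, hence $S_1\leq S_1^{\min}$ by monotonicity of \eqref{nu}; since $\dot S_1\leq 0$ always, $S_1$ never recovers, and for any admissible control $\dot S_2 \leq \nu(S_1)<0$ at $S_2=S_2^\star$, so an upward re-crossing is impossible; the only other alternative, $\phi$ vanishing below $S_2^\star$, is killed by part i. (Indeed, once $S_1<S_1^{\min}$ the re-crossing is excluded for \emph{every} control, so the clause ``while $\phi$ stays negative'' in your argument is not even needed.) This makes your write-up a more complete proof of the statement as it is actually worded than the one the paper provides.
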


\proof
When the switching function $\phi$ is equal to zero, one has
\[
\dot \phi = \lambda_{2}\mu^{\prime}(S_{2})(M-S_{1}-S_{2}).% \ .
\]
As for any initial condition in  $({\cal D}\setminus {\cal T})\setminus
{\cal Z}_{1}$, we know from Corollary \ref{corollary2} that
$\lambda_{2}$ stays positive, we deduce the property that will be useful in the following:
\[
\phi=0 \Rightarrow \left|\begin{array}{ll}
\dot \phi >0 & \mbox{when } S_{2}<S_{2}^{\star},\\
\dot \phi <0 & \mbox{when } S_{2}>S_{2}^{\star},
\end{array}\right.
\]
Let us prove i . Suppose that $\phi(t_0)\geq 0$ with $S_{2}(t_0)<S_{2}^{\star}$. If there exists a time 
 $t>t_0$ such that $\phi(t)=0$ and $(S_{1}(t),S_{2}(t))\notin{\cal T}\cup{\cal Z}_{1}$. 
 We then have $\dot{\phi}(t)\leq 0$ and $u=0$ on $[t_0,t]$, therefore $S_2(t)< S_2^{\star}$, 
and $\dot{\phi}(t)>0$ (by the remark above). We then have a contradiction which proves i. 
The proof of ii follows in the same way. \qed
 %or equivalently that $\phi$ is increasing when a switching time occurs for
%$S_{2}<S_{2}^{\star}$, and decreasing when $S_{2}>S_{2}^{\star}$. We
%can then deduce:
%\begin{itemize}
%\item[-] If $\phi>0$ with $S_{2}<S_{2}^{\star}$, $u=0$ is optimal and thus
%the trajectory cannot cross $S_{2}=S_{2}^{\star}$: $\phi$ and $Q$
%have to remain respectively positive and null, until reaching ${\cal T}$ or
%${\cal Z}_{1}$.
%\item[-] If $\phi<0$ with $S_{2}>S_{2}^{\star}$, $u=1$ is
%  optimal and $\phi$ has to remain negative until the state
%  reaches ${\cal T}$ or $S_{2}=S_{2}^{\star}$ (reaching ${\cal
%    Z}_{1}$ with $u=1$ would imply to have $u=1$ until
%  reaching the target, which contradicts Corollary \ref{corollary2}).  \qed
%\end{itemize}

\bigskip

We  now study optimal trajectories in the domain ${\cal Z}_{0}$.
For this purpose, we consider the (possibly empty) subset of ${\cal
  Z}_{0}$ defined as
\begin{equation}
{\cal C}_{0}:=\left\{ (S_{1},S_{2}) \in {\cal Z}_{0} \; \vert \;
    \varphi(S_{1},S_{2})=1 \right\},
\end{equation}
where the function $\varphi(\cdot)$ is defined as follows.
\begin{equation}
\varphi(S_{1},S_{2})=\int_{\ul
  S_{2}}^{S_{2}}\frac{\mu^{\prime}(s)\mu(\ul S_{2})(M-S_{1}-\ul S_{2})}{\mu(s)^{2}(M-S_{1}-s)}ds.
\end{equation}
When $S^{\star}_{2} <M$ we shall also consider the {\em end singular state}
$(S_{1}^{\star},S^{\star}_{2})$ where $S_{1}^{\star}$ is defined as follows.
\begin{equation}
\label{end_singular}
S_{1}^{\star}=\left|\begin{array}{ll}
\ul S_{1} & \mbox{when } S^{\star}_{2}\geq\ul S_{2} \mbox{ and } {\cal
  C}_{0}=\emptyset,\\
\inf\{ S_{1}>0 \,
  \vert \, \varphi(S_{1},S^{\star}_{2})>1\}  & \mbox{when } \varphi(\ul
  S_{1},S^{\star}_{2})\geq 1,\\
\inf\{ S_{1}>\ul S_{1} \,
  \vert \, \sigma_{2}(S_{1})<S^{\star}_{2}\} & \mbox{when } S^{\star}_{2}< \ul S_{2}.
\end{array}\right.
\end{equation}

The different possible positions of $S_{1}^{\star}$ are illustrated in
  Section \ref{numeric-sec}. Let us now give some properties of the set ${\cal C}_{0}$.
\begin{lemma}
\label{Lemma2}
Assume Hypothesis {\bf{H1}}. The following cases occur depending on the
relative position of $S^{\star}_{2}$ w.r.t. to $M$ and $\ul S_{2}$.
\begin{enumerate}[label=\roman{*}.]
\item If $S^{\star}_{2}\geq M$, then ${\cal C}_{0}$ is the graph of a decreasing
  $C^{1}$ function $S_{1}\mapsto S_{2}^{c}(S_{1})$ defined on $[0,\ul
  S_{1}]$.
\item If $S^{\star}_{2} \in (\ul S_{2},M)$ and $\varphi(\ul
  S_{1},S^{\star}_{2})< 1$, then ${\cal C}_{0}$ is empty.
\item If $S^{\star}_{2} \in (\ul S_{2},M)$ and $\varphi(\ul
  S_{1},S^{\star}_{2})\geq 1$, ${\cal C}_{0}$ is the graph of a decreasing
  function $S_{1}\mapsto S_{2}^{c}(S_{1})$ defined on
  $[S_{1}^{\star},\ul S_{1}]$ that is $C^{1}$ on $(S_{1}^{\star},\ul
  S_{1}]$. Furthermore, one has
   $S_{2}^{c}(S_{1})<S^{\star}_{2}$ for any $S_{1}\in
   (S_{1}^{\star},\ul S_{1}]$.
   When $S_{1}^{\star}>0$, one has $S_{2}^{c}(S_{1}^{\star})=S_{2}^{\star}$.
   The graph of the function $S_{2}^{c}(\cdot)$ has a vertical slope
   at $S_{1}^{\star}$ when $S_{1}^{\star}>0$ or $S_{1}^{\star}=0$ with
   $S_{2}^{c}(0)=S_{2}^{\star}$.
\item When $S^{\star}_{2}\leq \ul S_{2}$, then  ${\cal C}_{0}$ is
  empty and one has necessarily $S_{1}^{\min}\leq S_{1}^{\star}$.
\end{enumerate}
\end{lemma}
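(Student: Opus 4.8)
The plan is to reduce the whole statement to the sign behaviour of $\varphi$ in each variable, which rests on two derivative identities. For $(S_1,S_2)\in{\cal Z}_0$ we have $\mu(S_2)>0$ and $M-S_1-S_2>0$, so by the fundamental theorem of calculus
\[
\frac{\partial\varphi}{\partial S_2}(S_1,S_2)=\frac{\mu(\ul S_2)(M-S_1-\ul S_2)\,\mu'(S_2)}{\mu(S_2)^2(M-S_1-S_2)},
\]
whose sign is that of $\mu'(S_2)$, while differentiating under the integral sign and combining the two resulting terms over the common denominator $(M-S_1-s)^2$ gives the crucial identity
\[
\frac{\partial\varphi}{\partial S_1}(S_1,S_2)=\mu(\ul S_2)\int_{\ul S_2}^{S_2}\frac{\mu'(s)\,(s-\ul S_2)}{\mu(s)^2(M-S_1-s)^2}\,ds .
\]
On any range $[\ul S_2,S_2]$ with $S_2\le S_2^\star$ the integrand here is nonnegative, so $\partial\varphi/\partial S_1>0$ there. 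Together with $\varphi(S_1,\ul S_2)=0$ and the limiting behaviour of the slice $S_2\mapsto\varphi(S_1,S_2)$ as $S_2\uparrow M-S_1$ (the integrand is $\sim \mathrm{const}/(M-S_1-s)$, so $\varphi\to+\infty$ when $M-S_1<S_2^\star$ and $\varphi\to-\infty$ when $M-S_1>S_2^\star$), these facts determine the qualitative shape of every slice.

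For (i), when $S_2^\star\ge M$ every admissible $s$ satisfies $s<M-S_1\le S_2^\star$, so $\varphi(S_1,\cdot)$ increases strictly from $0$ to $+\infty$ on $[\ul S_2,M-S_1)$; hence for each $S_1\in[0,\ul S_1]$ there is a unique $S_2^c(S_1)$ with $\varphi=1$. Since $\partial\varphi/\partial S_2>0$ there, the implicit function theorem makes $S_2^c$ of class $C^1$, and $(S_2^c)'=-(\partial\varphi/\partial S_1)/(\partial\varphi/\partial S_2)<0$ because both partials are positive, giving the decreasing graph on $[0,\ul S_1]$. For (ii)--(iii) one has $S_2^\star\in(\ul S_2,M)$, and finiteness of the number $\varphi(\ul S_1,S_2^\star)$ forces $\ul S_1<M-S_2^\star$, so for every $S_1\le\ul S_1$ the slice attains its maximum at $S_2=S_2^\star$ and then decreases to $-\infty$. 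The map $S_1\mapsto\varphi(S_1,S_2^\star)$ is strictly increasing (its integrand is nonnegative), so if $\varphi(\ul S_1,S_2^\star)<1$ then $\varphi(S_1,S_2)\le\varphi(S_1,S_2^\star)\le\varphi(\ul S_1,S_2^\star)<1$ on all of ${\cal Z}_0$ and ${\cal C}_0=\emptyset$, which is (ii).

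In case (iii), $\varphi(\ul S_1,S_2^\star)\ge1$ and monotonicity in $S_1$ defines $S_1^\star=\inf\{S_1>0:\varphi(S_1,S_2^\star)>1\}$, matching the second branch of \eqref{end_singular}. For $S_1\in(S_1^\star,\ul S_1]$ the slice runs from $0$ up through the value $1$ before reaching its maximum at $S_2^\star$, producing a unique crossing $S_2^c(S_1)\in(\ul S_2,S_2^\star)$, which by the same IFT/sign argument is $C^1$, decreasing, and strictly below $S_2^\star$. At the vertex $S_1^\star$ one has $\varphi(S_1^\star,S_2^\star)=1$ with $\partial\varphi/\partial S_2=0$ (since $\mu'(S_2^\star)=0$) but $\partial\varphi/\partial S_1>0$, so there the level set is solved for $S_1$ as a function of $S_2$, whence $S_2^c(S_1^\star)=S_2^\star$ and $(S_2^c)'\to-\infty$, i.e. the vertical slope claimed when $S_1^\star>0$ (the boundary subcase $S_1^\star=0$ with $S_2^c(0)=S_2^\star$ being read off by continuity).

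For (iv), when $S_2^\star\le\ul S_2$ every $s>\ul S_2$ has $\mu'(s)<0$, so the integrand defining $\varphi$ is negative and $\varphi<0$ throughout ${\cal Z}_0$, giving ${\cal C}_0=\emptyset$. To get $S_1^{\min}\le S_1^\star$ I would use the third branch of \eqref{end_singular}: by the Remark the $u=1$ curve $\sigma_2(\cdot)$ leaves $(\ul S_1,\ul S_2)$ with $\ul S_2>S_2^\star$ and reaches $S_2=0$, hence crosses the level $S_2^\star$; at the first crossing $S_1^\star$ one has $\sigma_2(S_1^\star)=S_2^\star$ and, since $\sigma_2$ passes from above to below that level as $\sigma_1$ increases, $d\sigma_2/d\sigma_1(S_1^\star)\le0$. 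Substituting into the differential equation defining $\sigma_2$ yields $\mu(S_2^\star)(M-S_1^\star-S_2^\star)\le f(S_1^\star)$, i.e. $\nu(S_1^\star)\ge0$ with $\nu$ as in \eqref{nu}; since $\nu$ is increasing with unique root $S_1^{\min}$, this gives $S_1^\star\ge S_1^{\min}$. The main obstacle throughout is the non-monotone shape of the slices in (ii)--(iii): the level set $\{\varphi=1\}$ can a priori exhibit a second branch above $S_2^\star$, so the real work is to isolate the component with $S_2^c<S_2^\star$ described in the Lemma and to control its regularity up to the vertex $S_1^\star$, where the implicit function theorem degenerates in the $S_2$-direction and must instead be applied in the $S_1$-direction; the identity for $\partial\varphi/\partial S_1$, whose integrand changes sign exactly at $s=S_2^\star$, is what makes this tractable.
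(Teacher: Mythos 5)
Your proof is correct and, for parts i.--iii., takes essentially the same route as the paper: the two partial-derivative identities, the shape of the slices $S_2\mapsto\varphi(S_1,S_2)$ (increasing up to $S_2^\star$, decreasing beyond, divergent as $S_2\uparrow M-S_1$ in case i.), the Implicit Function Theorem for the decreasing $C^1$ graph, and the degeneration of $\partial\varphi/\partial S_2$ at the vertex for the vertical slope; incidentally, your formula for $\partial\varphi/\partial S_1$, with $(s-\ul S_2)$ under the integral, is the correct one, the factor $(S_2-\ul S_2)$ in the paper's display (\ref{derivatives}) being an evident typo that affects nothing since only positivity of the derivative is used. The genuinely different step is the inequality $S_1^{\min}\le S_1^{\star}$ in part iv.: the paper argues by contradiction through the dynamics (if $S_1^{\min}>S_1^{\star}$ then $\dot S_2<0$ on $\{S_2=S_2^{\star},\ S_1\le S_1^{\star}\}$ for every control, so the forward $u=1$ trajectory from $(S_1^{\star},S_2^{\star})$ stays strictly below $S_2^{\star}$, yet by uniqueness of solutions it must coincide with the curve $\sigma_2(\cdot)$, forcing $\sigma_2(\ul S_1)<S_2^{\star}\le \ul S_2$ and contradicting $\sigma_2(\ul S_1)=\ul S_2$), whereas you read the inequality directly from the slope of $\sigma_2$ at its first downcrossing of the level $S_2^{\star}$, where $d\sigma_2/d\sigma_1\le 0$ substituted into the ODE defining $\sigma_2$ gives $\nu(S_1^{\star})\ge 0$, hence $S_1^{\star}\ge S_1^{\min}$ by monotonicity of $\nu$. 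Both arguments are valid; yours is shorter and purely local, while the paper's trades that for an invariance/uniqueness argument that never needs the sign of $d\sigma_2/d\sigma_1$ at the crossing. Finally, the possible second branch of $\{\varphi=1\}$ above $S_2^{\star}$ that you flag as the remaining obstacle is a real defect of the literal definition of ${\cal C}_0$, but it is not a gap of yours relative to the paper: the paper's phrase ``$S_2^c$, if it exists, has to be less or equal to $S_2^{\star}$'' likewise pins down only the lowest crossing, and only that branch enters the definition of ${\cal E}_0$ and the ensuing synthesis, so on this point you match the paper's level of rigor and are merely more candid about it.
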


\proof
Consider the case $S^{\star}_{2}\geq M$. For each $S_{1}\in [0,\ul S_{1}]$,
the map $S_{2}\mapsto \varphi(S_{1},S_{2})$ is increasing and there
exists a
number $m>0$ such that 
$\mu^{\prime}(S_{2})/\mu(S_{2})^{2}\geq m$ for any $S_{2}\in[\ul
S_{2},M-S_{1}]$.
Then, one can write
$$
\varphi(S_{1},S_{2})\geq m\mu(\ul S_{2})(M-S_{1}-\ul S_{2})\int_{\ul
  S_{2}}^{S_{2}}\frac{ds}{M-S_{1}-s}=m\mu(\ul S_{2})(M-S_{1}-\ul
  S_{2})\ln\left(\frac{M-S_{1}-S_{2}}{M-S_{1}-\ul S_{2}}\right),
$$
and deduce $\lim_{S_{2}\to
  M-S_{1}}\varphi(S_{1},S_{2})=+\infty$. Consequently, for each 
$S_{1}\in [0,\ul S_{1}]$ there exists an unique $S^{c}_{2}> \ul
S_{2}$ such that $\varphi(S_{1},S^{c}_{2})=1$. Furthermore, one has
for any $S_{2}>\ul S_{2}$
\begin{equation}
\label{derivatives}
\frac{\partial \varphi}{\partial S_{1}}(S_{1},S_{2}) =
\int_{\ul S_{2}}^{S_{2}}\frac{\mu^{\prime}(s)\mu(\ul
  S_{2})(S_{2}-\ul
  S_{2})}{\mu(s)^{2}(M-S_{1}-s)^{2}}ds, \;
\frac{\partial \varphi}{\partial S_{2}}(S_{1},S_{2}) =
\frac{\mu^{\prime}(S_{2})\mu(\ul S_{2})(M-S_{1}-\ul
  S_{2})}{\mu(S_{2})^{2}(M-S_{1}-S_{2})},
\end{equation}
that are both positive, 
and by the Implicit Function Theorem we conclude that $S_{1}\mapsto
S_{2}^{c}(S_{1})$ is a $C^{1}$ decreasing map defined over $[0,\ul S_{1}]$. This proves i.

Let us now prove ii. and iii. Consider the case $S^{\star}_{2}< M$. When $S^{\star}_{2}\leq \ul S_{2}$,
${\cal C}_{0}$ is clearly empty. When $S^{\star}_{2}> \ul S_{2}$, let us take $S_{1}\in [0,\ul S_{1}]$.
Clearly the map $S_{2}\mapsto \varphi(S_{1},S_{2})$ is
non increasing for $S_{2}\geq S^{\star}_{2}$. So $S_{2}^{c}$, if it exists,
has to be less or equal to $S^{\star}_{2}$.
One can observe the following facts:
\begin{itemize}
\item[-] the map $S_{2}\mapsto \varphi(S_{1},S_{2})$ is
increasing on $[\ul S_{2},S^{\star}_{2}]$ for any $S_{1}\in [0,\ul S_{1}]$,
\item[-] the map $S_{1}\mapsto \varphi(S_{1},S_{2})$ is
increasing on $[0,\ul S_{1}]$ for any $S_{2}\in (\ul S_{2},S^{\star}_{2}]$
(see the derivative (\ref{derivatives})),
\end{itemize}
and deduce that when $\varphi(\underline S_{1},S^{\star}_{2})< 1$, then
one has $\varphi(S_{1},S_{2})<1$ for any $(S_{1},S_{2})\in [0,\ul
S_{1}]\times (\ul S_{2},S^{\star}_{2}]$. The set ${\cal C}_{0}$ is then
empty in this case. Otherwise, for any $S_{1}>S_{1}^{\star}$, there exists an unique
$S_{2}^{c}\in(\ul S_{2},S^{\star}_{2})$ such that $\varphi(S_{1},S_{2}^{c})=1$.
As previously, one can use the 
Implicit Function Theorem to write
\begin{equation}
\label{dS2c}
S_{2}^{c\,\prime}(S_{1})=-\frac{\ds \frac{\partial\varphi}{\partial
    S_{1}}(S_{1},S_{2}^{c}(S_{1}))}{\ds \frac{\partial\varphi}{\partial
    S_{2}}(S_{1},S_{2}^{c}(S_{1}))}<0, \quad \forall S_{1} \in
(S_{1}^{\star},\ul S_{1}].
\end{equation}
and thus conclude that $S{1}\mapsto
S_{2}^{c}(S_{1})$ is a decreasing map defined on
$[S_{1}^{\star},\ul S_{1}]$ and of class $C^{1}$ over $(S_{1}^{\star},\ul S_{1}]$.
If $S_{1}^{\star}>0$ then one has necessarily
$S_{2}^{c}(S_{1}^{\star})=S_{2}^{\star}$. Otherwise, one should have
$\varphi(S_{1}^{\star},S_{2}^{\star})>1$ and by continuity of
$\varphi$, there should exist a neighborhood of
$(S_{1}^{\star},S_{2}^{\star})$ with $\varphi$ larger that $1$, in
contradiction with the definition of $S_{1}^{\star}$. Finally, from
(\ref{dS2c}) and (\ref{derivatives}), one has:
\[
\lim_{S_{1}\to
  S_{1}^{\star},S_{1}>S_{1}^{\star}}S_{2}^{c\,\prime}(S_{1})=-\infty 
\]
when $S_{2}^{c}(S_{1}^{\star})=S_{2}^{\star}$.\\

We end the proof showing that $S_{1}^{\min}\leq S_{1}^{\star}$
when $S_{2}^{\star}\leq \underline S_{2}$. If not, one has:
\[
S_{1} \leq S_{1}^{\star}, \, S_{2}=S_{2}^{\star}
\Rightarrow \dot S_{2} <0,% \ , \forall u \in [0,1] \ .
\]
for any solution $(S_1(\cdot),S_2(\cdot))$ of \eqref{dynS1-dynS2}.
Then the trajectory $(S_{1}(\cdot),S_{2}(\cdot))$ with the initial condition
$(S_{1}^{\star},S_{2}^{\star})$ and the constant
control $u=1$ is such that $S_{2}(t)<S_{2}^{\star}$ for any
$t>0$. But one has $S_{2}^{\star}=\sigma_{2}(S_{1}^{\star})$ and 
this trajectory verifies also $S_{2}(t)=\sigma_{2}(S_{1}(t))$
for any $t>0$ such that $S_{1}(t) \in [\underline
S_{1},S_{1}^{\star})$, and thus $\sigma_{2}(\underline
S_{1})<S_{2}^{\star}\leq \underline S_{2}$, which is in contradiction
with the definition of $\sigma_{2}(\cdot)$. This proves iv.
\qed

\bigskip

We can first characterize the optimal trajectories on the subset
\[
{\cal E}_{0}:=\overline{\left\{ (S_{1},S_{2}) \in {\cal Z}_{0} \; \vert \; 
(S_{1},s_{2})\notin {\cal C}_{0} , \; \forall s_{2}< S_{2} \right\}} \
.
\]

\begin{proposition}
\label{proposition2}
Assume that Hypotheses {\bf{H0}} and {\bf{H1}} are fulfilled.
\begin{enumerate}[label=\roman{*}.]
\item For any state in ${\cal E}_{0}$,
the constant control $u=0$ is optimal until reaching the target,
\item The switching function $\phi$ is
equal to zero at any state in ${\cal C}_{0}$.
\end{enumerate}
\end{proposition}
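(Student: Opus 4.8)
The plan is, for each $S^{0}=(S_1^0,S_2^0)\in\mathcal{E}_0$, to follow the trajectory generated by $u\equiv 0$, to equip it with an adjoint that makes it extremal, and then to upgrade extremality to optimality; statement ii will drop out of the same computation. Along $u\equiv 0$ the component $S_1$ stays equal to $S_1^0\le\ul S_1$ while $\dot S_2=-\mu(S_2)(M-S_1^0-S_2)$ is bounded above by a strictly negative constant on $[\ul S_2,S_2^0]$, so $S_2$ decreases and reaches $\ul S_2$ in finite time, i.e. the trajectory reaches $\mathcal{T}$ at $(S_1^0,\ul S_2)$, by the same argument used for the attainability of the target.

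The heart of the proof is an explicit integration of the adjoint along this arc. I would pick the admissible transversality vector $\lambda(t_f)=(0,1)$ (allowed since $S_1^0\le\ul S_1$, so $S_1(t_f)\le\ul S_1$) and integrate $\dot\lambda_2=\lambda_2(\mu'(S_2)(M-S_1^0-S_2)-\mu(S_2))$ to obtain the closed form
\[
\lambda_2=\frac{\mu(\ul S_2)(M-S_1^0-\ul S_2)}{\mu(S_2)(M-S_1^0-S_2)}>0,
\]
which re-proves $\lambda_2>0$ and, via $H=0$, gives $\lambda_0=\mu(\ul S_2)(M-S_1^0-\ul S_2)>0$. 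Since $\dot\phi=\lambda_2\mu'(S_2)(M-S_1^0-S_2)$ when $u=0$, the monotone change of variable $dS_2=-\mu(S_2)(M-S_1^0-S_2)\,dt$ turns the equation for $\phi$ into $d\phi/dS_2=-\lambda_2\mu'(S_2)/\mu(S_2)$, and substituting the formula for $\lambda_2$ the right-hand side is exactly minus the integrand defining $\varphi$. Integrating from $\ul S_2$, where $\phi=\lambda_2-\lambda_1=1$, yields the key identity
\[
\phi(S_1^0,S_2)=1-\varphi(S_1^0,S_2)
\]
valid all along the arc. This is precisely what makes the threshold $\varphi=1$ and the curve $\mathcal{C}_0$ relevant.

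Statement ii is then immediate: on $\mathcal{C}_0$ one has $\varphi=1$, hence $\phi=0$. For i, the definition of $\mathcal{E}_0$ gives $\varphi(S_1^0,s_2)\le 1$ for every $s_2\le S_2^0$ reached along the arc --- strictly below $S_2^0$, with equality only when $S^0\in\mathcal{C}_0$; since $s_2\mapsto\varphi(S_1^0,s_2)$ attains its maximum over $[\ul S_2,S_2^0]$ at $\min(S_2^0,S_2^{\star})$, the bound persists even when the arc crosses $S_2^{\star}$. Hence $\phi=1-\varphi\ge 0$ throughout, so $(S^0(\cdot),\lambda(\cdot),0)$ satisfies the Pontryagin conditions and $u=0$ is an extremal control reaching the target.

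The last and most delicate point is to turn this extremal into an optimal one, which I would do by showing it is the unique extremal issued from $S^0$. By Corollary \ref{corollary2} any optimal trajectory from $\mathcal{E}_0\subset\mathcal{Z}_0$ reaches the target with $S_2(t_f)=\ul S_2$ and $\lambda_2>0$. If $S_1(t_f)=\ul S_1$, then $\dot S_1=-uf(S_1)\le 0$ together with $S_1^0\le\ul S_1$ forces $S_1\equiv\ul S_1$ and hence $u\equiv 0$ (as $f(\ul S_1)>0$), and we are done. Otherwise $\lambda(t_f)=(0,1)$, $\phi(t_f)=1$, and reading the trajectory backward the terminal $u=0$ arc persists (with $\phi=1-\varphi$) until $\phi$ first vanishes; by the identity this can only happen on $\mathcal{C}_0$, at a point where $S_2<S_2^{\star}$ and therefore $\dot\phi>0$, which forces a $u=1$ arc immediately before. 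The crux --- and the step I expect to be hardest --- is to exclude such a $u=1$ arc joining a point of $\mathcal{E}_0$ (which lies on or below $\mathcal{C}_0$) to $\mathcal{C}_0$ itself: here I would combine the rigidity of the switching pattern from Corollary \ref{corollary3} (once $\phi\ge 0$ with $S_2<S_2^{\star}$ it stays positive) with the monotonicity and location of the graph $S_2^c(\cdot)$ from Lemma \ref{Lemma2}. Equivalently, the whole claim can be cast as a verification argument for the candidate value $W(S_1,S_2)=\int_{\ul S_2}^{S_2}\frac{ds}{\mu(s)(M-S_1-s)}$, whose Hamilton--Jacobi inequality reduces on $\mathcal{E}_0$ to exactly $\varphi\le 1$; the same difficulty then reappears as the need to control trajectories that try to leave $\mathcal{E}_0$ across $\mathcal{C}_0$.
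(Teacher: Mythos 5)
Your computational core is exactly the paper's: along the terminal $u=0$ arc the adjoint is obtained in closed form (equivalently from $H=0$), the monotone change of variable $t\mapsto S_2$ gives
\[
\frac{d\phi}{dS_{2}} = -\frac{\mu^{\prime}(S_{2})\,\mu(\ul S_{2})(M-S_{1}-\ul S_{2})}{\mu(S_{2})^{2}(M-S_{1}-S_{2})},
\qquad \phi(\ul S_{2})=1,
\]
hence the identity $\phi=1-\varphi(S_1,\cdot)$; statement ii and the fact that $u\equiv 0$ is \emph{extremal} on $\mathcal{E}_0$ follow, just as in the paper (your observation that the maximum of $\varphi(S_1^0,\cdot)$ sits at $\min(S_2^0,S_2^{\star})$ is also the right way to cross $S_2^{\star}$, and your treatment of the corner case $S_1(t_f)=\ul S_1\Rightarrow u\equiv 0$ is in fact cleaner than the paper's augmented-target trick).

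However, statement i is not actually proved: you stop at extremality and explicitly leave open (``the step I expect to be hardest'') the exclusion of the competing structure, namely a $u=1$ arc issued from a point of $\mathcal{E}_0$ that switches on $\mathcal{C}_0$ and then descends with $u=0$ to the target. Since the Pontryagin principle is only necessary, this is a genuine gap: a priori both structures are extremal from the same initial state, and nothing in your write-up compares their times or rules one out. The ingredient the paper uses at this point, and which is missing from your argument, is that the terminal $u=0$ arc is \emph{vertical}: a last switching time $t_c$ must therefore occur at the unique point of $\mathcal{C}_0$ with abscissa $S_1(t_f)$, i.e.\ at $(S_1(t_f),S_2^{c}(S_1(t_f)))$, and $\dot S_1\le 0$ forces $S_1(t_f)\le S_1^0$. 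Combined with Lemma \ref{Lemma2}, which confines $\mathcal{C}_0$ to abscissas in $[S_1^{\star},\ul S_1]$, this shows that when $S_1^0<S_1^{\star}$ no admissible switching point exists at all, so the pure $u=0$ extremal is the only candidate and its optimality follows from the existence of an optimal control; the remaining configurations (initial state on $\mathcal{C}_0$, at $S_1^0=S_1^{\star}$, or at $S_1^0=\ul S_1$) are then settled by the same backward integration together with the closure and augmented-target arguments. (Note, in fairness, that the paper's own exclusion is spelled out only under the restriction $S_1^0<S_1^{\star}$; and your $W$-based verification route, which correctly reduces the Hamilton--Jacobi inequality to $\varphi\le 1$, does close on the invariant strip $\{S_1<S_1^{\star}\}$ where trajectories never meet $\{\varphi>1\}$ --- but, as you concede, it does not control trajectories crossing $\mathcal{C}_0$, so neither of your two routes, as written, delivers statement i on all of $\mathcal{E}_0$.)
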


\proof
Consider an initial condition $S^{0}$ in ${\cal Z}_{0}$ that is such that
$S_{1}^{0}\leq \ul S_{1}$.
From equation (\ref{dynS1-dynS2}) one has $\dot S_{1}\leq 0$ whatever is the
control $u(\cdot)$, and then $S_{1}(t)\leq \ul S_{1}$ for any positive
time. Consequently any trajectory stays in the set ${\cal Z}_{0}$ until reaching
the target.

When $S_{1}^{0}< \ul S_{1}$, an optimal trajectory has to reach the
target at a time $t_{f}>0$ such that $S_{1}(t_{f})<\ul S_{1}$. From the
transversality condition (\ref{transversal}) one has $\lambda_{2}(t_{f})=1$ and
from the adjoint equation (\ref{adjoint1-adjoint2}) one deduces that
$\lambda_{2}(t)>0$ at any time. The switching function is such that
\[
\dot \phi=\lambda_{2}\mu^{\prime}(S_{2})(M-S_{1}-S_{2})+\min(0,f^{\prime}(S_{1})\phi),
\]
with $\phi(t_{f})=1$, from the transversality condition (\ref{transversal}).

Consider an optimal trajectory that reaches the target at time
$t_{f}>0$. The following properties are fulfilled for any
time $t<t_{f}$.
\begin{itemize}
\item[-] if $\phi(t)< 0$ then  $u=1$ is optimal on $[0,t_{c})$ up
  to a commutation time $t_{c}<t_{f}$
  such that $\phi(t_{c})=0$ (otherwise the target cannot be be reached with
  $\phi(t_{f})=1$). 
\item[-] when $\phi(t)\geq 0$ with $S_{2}(t)<S^{\star}_{2}$, then $S_{2}(t')<S^{\star }$ and $\phi(t')>0$ for any
$t \in (t,t_{f})$ i.e. $u=0$ is optimal on $(t,t_{f})$.
\item[-] when $\phi(t)\geq 0$ with $S_{2}(t)>S^{\star}_{2}$, $S_{2}$ and
  $\phi$ are decreasing up to a time $t'$ such that
  $S_{2}(t')=S^{\star}_{2}$ and $u=0$ is optimal on $(t,t')$.
\end{itemize}
The existence of a commutation time $t_{c}$ can be determined by the
backward integration of equations \eqref{dynS1-dynS2}-(\ref{adjoint1-adjoint2}) with the
constant control $u=0$ up to a possible time for which the switching
function $\phi$ is equal to zero. Remind that the Hamiltonian 
(\ref{Hamiltonian}) has to be
identically equal to zero along the optimal trajectories. 
Then, for each $S_{1} \in [0,\ul S_{1})$, $H=0$ gives
$\lambda_{0}=\mu(\ul S_{2})(M-S_{1}-\ul S_{2})$ for the optimal
trajectories that reach the target at $(S_{1},\ul S_{2})$. As long as
$u=0$ is optimal, $S_{1}$ is constant and one can also write from $H=0$
with $\lambda_{0}>0$ (the absence of abnormal extremal is given by
Corollary \ref{corollary2}):
\[
\lambda_{2}=\frac{\lambda_{0}}{\mu(S_{2})(M-S_{1}-S_{2})}=
\frac{\mu(\ul S_{2})(M-S_{1}-\ul S_{2})}{\mu(S_{2})(M-S_{1}-S_{2})}.
\]
Then, $(S_{2},\phi)$ is solution of the Cauchy problem:%dynamics
\[
\left\{
\begin{array}{llll}
\dot S_{2} & = & -\mu(S_{2})(M-S_{1}-S_{2}), & S_{2}(t_{f})=\ul
S_{2},\\[2mm]
\dot \phi & = & \ds \frac{\mu^{\prime}(S_{2})\mu(\ul S_{2})(M-S_{1}-\ul
  S_{2})}{\mu(S_{2})}, & \phi(t_{f})=1,
\end{array}
\right.
\]
where $S_{1}<\ul S_{1}$ is fixed.
As the solution $S_{2}(\cdot)$ of this dynamics is strictly increasing, one can
parameterize the solution $\phi(\cdot)$ by $S_{2}\in [\ul
S_{2},M-S_{1})$ instead of the time, which amounts to write that
$S_{2}\mapsto \phi$ is solution of the Cauchy problem
\begin{equation}
\label{dphidS2}
\frac{d\phi}{dS_{2}} =  -\frac{\mu^{\prime}(S_{2})\mu(\ul S_{2})(M-S_{1}-\ul
  S_{2})}{\mu(S_{2})^{2}(M-S_{1}-S_{2})},
\end{equation} 
with the boundary condition $\phi(\ul S_{2})=1$.
Then a necessary and sufficient condition for the existence of a
commutation time $t_{c}$ is the existence of $S_{2}^{c}\in (\ul S_{2},M-S_{1})$ such that $\phi(S_{2}^{c})=0$  or
equivalently to have $\varphi(S_{1},S_{2}^{c})=1$ (that is exactly the condition $(S_{1},S_{2}^{c}) \in {\cal
  C}_{0}$). Consequently, for any initial condition in
${\cal E}_{0}$ with $S_{1}^{0}<S_{1}^{\star}$, the only possibility to reach the
target with $\phi=1$ is to choose the constant control $u=0$:
\begin{itemize}
\item[-] when the initial condition is the interior of ${\cal E}_{0}$, $\phi(\cdot)$ is always positive.
\item[-] for initial conditions in ${\cal C}_{0}$ or such that
$S_{1}^{0}=S_{1}^{\star}$ (when $S_{1}^{\star}>0$), $\phi(\cdot)$ is positive excepted at one isolated time for which
it is null (when the state leaves ${\cal C}_{0}$ or passes through the
end singular state).
\end{itemize}
We also deduce that $\phi=0$ at any state in ${\cal C}_{0}$ with $S_{1}<\ul S_{1}$.\\

Finally we consider initial conditions in ${\cal E}_{0}$ with
$S_{1}^{0}=\ul S_{1}$. 
As any trajectory from such initial condition is such that $S_{1}(t)\leq
\ul S_{1}$, any optimal trajectory is also optimal for the problem
with an augmented target
${\cal T}^{\prime}:=\{(S_{1},S_{2})\in [0,\ul S_{1}^{\prime}]\times [0,\ul
S_{2}] \}$ such that $\ul S_{1}^{\prime}>\ul S_{1}$. Then, the former
argumentation allows to conclude that the constant control $u=0$ is
also optimal for such initial condition, and thus one has $\phi(t)\geq
0$ at any time $t \in [0,t_{f}]$.
The transversality condition (\ref{transversal}) for such trajectories that reach
the target at the corner state $(\ul S_{1},\ul S_{2})$ gives
$\phi(t_{f})=1-2\alpha$ (where $\alpha \in [0,1]$). 
So, one deduces that $\phi(t_{f})$ is less than one and that the
backward integration of  equations
\ref{dynS1-dynS2}-(\ref{adjoint1-adjoint2}) with the
constant control $u=0$ (which amounts to solve the differential equation
(\ref{dphidS2}) with a boundary condition $\phi(\ul S_{2})$ in
$[0,1]$) gives the existence of a commutation time $t_{c}$ such that
$S_{2}(t_{c})\leq S_{2}^{c}(\ul S_{1})$.
We conclude that at state $(\ul S_{1},S_{2}^{c}(\ul S_{1}))$ one
should have also $\phi=0$.\qed

\section{Optimal synthesis in the admissible case}
\label{sec-admissible}

We first give a global characterization of the optimal solutions
when $S^{\star}_{2}\geq M$.

\begin{proposition}
\label{propositionMonod}
Assume that Hypotheses {\bf{H0}} and {\bf{H1}} are fulfilled with $S^{\star}_{2}\geq M$.
Then the feedback
\[
u^{\star}[S_{1},S_{2}]=\left|\begin{array}{ll}
    0 & \mbox{when } (S_{1},S_{2})\in {\cal E}_{0}\\
    1 & \mbox{otherwise}
  \end{array}\right.
\]
is optimal.
\end{proposition}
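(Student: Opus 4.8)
The plan is to verify that, at every state of $\mathcal{D}\setminus\mathcal{T}$, the feedback $u^{\star}$ returns the value of an optimal control, and then to conclude by the dynamic programming principle. I would first record the role of the hypothesis $S^{\star}_{2}\geq M$: every state of $\mathcal{D}$ satisfies $S_{2}<M\leq S^{\star}_{2}$, so the singular locus is empty and $\mu^{\prime}(S_{2})>0$ on $\mathcal{D}$. For an extremal lying outside $\mathcal{Z}_{1}$, Corollary \ref{corollary2} gives $\lambda_{2}>0$, hence whenever $\phi=0$ one has $\dot{\phi}=\lambda_{2}\mu^{\prime}(S_{2})(M-S_{1}-S_{2})>0$; by item~(i) of Corollary \ref{corollary3} the switching function can then vanish only once and only from negative to positive values, so every optimal control starting outside $\mathcal{Z}_{1}$ is bang-bang with at most one switch, necessarily from $u=1$ to $u=0$.

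Two of the four cells of the partition are already settled and agree with $u^{\star}$: on $\mathcal{Z}_{1}$ the control $u=1$ is optimal (Proposition \ref{proposition1}) and on $\mathcal{E}_{0}$ the control $u=0$ is optimal (Proposition \ref{proposition2}). It then remains to show that $u=1$ is the optimal initial control on $\mathcal{Z}_{0}\setminus\mathcal{E}_{0}$ and on $\mathcal{Z}_{s}$. For $\mathcal{Z}_{s}$ the argument is immediate: there $S_{1}>\ul S_{1}$, and since $u=0$ keeps $S_{1}$ constant, the control $u\equiv 0$ never reaches $\mathcal{T}$; as the optimal control is $u=1$ followed by $u=0$, it must begin with $u=1$.

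For a point $P=(S_{1}^{0},S_{2}^{0})\in\mathcal{Z}_{0}\setminus\mathcal{E}_{0}$, so that $S_{1}^{0}\leq\ul S_{1}$ and $S_{2}^{0}>S_{2}^{c}(S_{1}^{0})$ (the graph of $S_{2}^{c}$ being $\mathcal{C}_{0}$, Lemma \ref{Lemma2} case i), I would argue by contradiction. If the optimal control did not start with $u=1$, then by the bang-bang structure it would be $u\equiv 0$, so $S_{1}$ would stay equal to $S_{1}^{0}$ and the target would be reached at $(S_{1}^{0},\ul S_{2})$; integrating the equation for $\phi$ as in Proposition \ref{proposition2} gives $\phi(S_{2})=\phi(t_{f})-\varphi(S_{1}^{0},S_{2})$ with $\phi(t_{f})\leq 1$ (equal to $1$ when $S_{1}^{0}<\ul S_{1}$, equal to $1-2\alpha\leq 1$ with $\alpha\in[0,1]$ at the corner). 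Since $S_{2}\mapsto\varphi(S_{1}^{0},S_{2})$ is increasing on $[\ul S_{2},S^{\star}_{2}]$, $\varphi(S_{1}^{0},S_{2}^{c}(S_{1}^{0}))=1$, and $S_{2}^{0}<M\leq S^{\star}_{2}$, it follows that $\varphi(S_{1}^{0},S_{2}^{0})>1$ and hence $\phi(0)<0$, contradicting $\phi<0\Rightarrow u=1$. Thus the optimal control starts with $u=1=u^{\star}(P)$.

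Having matched $u^{\star}$ with the optimal instantaneous control at every state, I would finish with the dynamic programming principle: the tail of any optimal trajectory $\bar{S}(\cdot)$ is again optimal, so $\bar{u}(t)=u^{\star}(\bar{S}(t))$ for a.e. $t$, i.e. any optimal trajectory is a closed-loop trajectory of $u^{\star}$, proving optimality. Concretely, the $u=1$ arcs steer the state to the switching curve $\mathcal{C}_{0}$, where $\phi$ vanishes (Proposition \ref{proposition2}), after which $u=0$ drives it through $\mathcal{E}_{0}$ to $\mathcal{T}$, with no chattering since $u=0$ moves the state strictly downwards, away from $\mathcal{C}_{0}$. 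I expect the delicate point to be the sign computation on $\mathcal{Z}_{0}\setminus\mathcal{E}_{0}$: it is the inequality $\varphi>1$ above $\mathcal{C}_{0}$—which rests on the monotonicity of $\varphi$ in $S_{2}$ below $S^{\star}_{2}$, hence on $S^{\star}_{2}\geq M$—that excludes $u=0$ there and pins the switching locus exactly on $\mathcal{C}_{0}$.
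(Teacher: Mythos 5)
Your proof is correct and takes essentially the same route as the paper: both rest on Propositions \ref{proposition1} and \ref{proposition2} to settle $\mathcal{Z}_1$ and $\mathcal{E}_0$, on the positivity of $\lambda_2$ (Corollary \ref{corollary2}) combined with $S_2<M\leq S_2^\star$ to force $\dot\phi>0$ at any zero of $\phi$ (hence bang-bang with at most one switch, from $u=1$ to $u=0$), and on a contradiction excluding $u=0$ arcs outside $\mathcal{E}_0\cup\mathcal{Z}_1$. The differences are only in packaging: where the paper invokes Proposition \ref{proposition2} (ii) ($\phi=0$ on $\mathcal{C}_0$) and Proposition \ref{proposition1} (entering $\mathcal{Z}_1$ with $\phi>0$) to get the contradiction, you re-run the backward $\varphi$-integration to show $\phi(0)<0$ above $\mathcal{C}_0$ and use non-attainability of $\mathcal{T}$ under $u\equiv 0$ from $\mathcal{Z}_s$ --- the same underlying facts --- together with an explicit dynamic-programming gluing step that the paper leaves implicit.
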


\proof
When the state is in ${\cal E}_{0}$ or in ${\cal
  Z}_{1}$, Propositions \ref{proposition1} and \ref{proposition2} give already the announced result.

If the constant control $u=1$ is not optimal outside these two
subsets, there should exists a time $t$ such that $\phi(t)\geq 0$
before reaching the target. According to Corollary \ref{corollary3},
  the optimal trajectory reaches ${\cal T}$ or ${\cal Z}_{1}$ with the
  constant control $u=0$ and $\phi>0$. This implies that
\begin{itemize}
\item[-] either the trajectory crosses ${\cal C}_{0}$ which is,
  according to Proposition \ref{proposition2}, a locus for which $\phi=0$, 
  thus a contradiction,
\item[-] either the trajectory reaches ${\cal Z}_{1}$ with $\phi>0$
  and then switching to $u=1$ at the boundary of ${\cal Z}_{1}$ is not possible, thus
  a contradiction with Proposition \ref{proposition1}.\qed
\end{itemize}

\bigskip

We give now a characterization of the optimal solutions of the problem
when $S^{\star}_{2}< M$, but under an assumption of the admissibility
of the singular arc up to the end singular state $S_1^\star$ defined in
(\ref{end_singular}).
We recall that $S_{1}^{\min}$ is the root of the function
$\nu(\cdot)$ defined in (\ref{nu}).

\begin{proposition}
\label{propositionHaldane1}
  Assume that Hypotheses {\bf{H0}} and {\bf{H1}} are fulfilled with $S^{\star}_{2}< M$.
  When $S_{1}^{\min}\leq S_{1}^{\star}$, the feedback
  \[
  u^{\star}[S_{1},S_{2}]=\left|\begin{array}{ll}
1  & \mbox{when } (S_{1},S_{2})\notin{\cal E}_{0} \mbox{ and }
S_{2}<S^{\star}_{2} \mbox{ or } (S_{1},S_{2})\in{\cal Z}_{1}\\
u_s(S_{1}) & \mbox{when } S_{1}>S_{1}^{\star} \mbox{ and } S_{2}=S^{\star}_{2}\\
0 & \mbox{otherwise}
\end{array}\right.
\]
is optimal, where $u_s(\cdot)$ is given in (\ref{singular_feedback}).
\end{proposition}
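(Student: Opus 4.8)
The plan is to use the Pontryagin Maximum Principle together with the sign-propagation of the switching function $\phi$ from Corollaries \ref{corollary2} and \ref{corollary3} to show that any optimal control must coincide almost everywhere with $u^\star$. On $\mathcal{Z}_1$ and $\mathcal{E}_0$ this is already furnished by Propositions \ref{proposition1} and \ref{proposition2}, so the whole difficulty is concentrated on $\mathcal{Z}_s\setminus\mathcal{E}_0$, which the singular line $\{S_2=S_2^\star\}$ splits into a lower part ($S_2<S_2^\star$) and an upper part ($S_2>S_2^\star$). I would first check the feasibility of the synthesis generated by $u^\star$: from a lower point outside $\mathcal{E}_0$ the control $u=1$ drives the state up-left until it enters $\mathcal{Z}_1$ or meets the line at some $S_1^{e}>S_1^\star$; the feedback then switches to $u_s$, which by \eqref{singular_feedback} enforces $\dot S_2=0$ and lets $S_1$ decrease to the end singular state $(S_1^\star,S_2^\star)$; finally $u=0$ sends the state vertically into $\mathcal{T}$. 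From an upper point the control $u=0$ keeps $S_1$ fixed and lowers $S_2$ onto the line (or into $\mathcal{Z}_1$), after which the same arc-then-descent pattern applies. The admissibility hypothesis $S_1^{\min}\le S_1^\star$ is precisely what guarantees $u_s(S_1)\le1$ on the used portion $S_1\in[S_1^\star,S_1^{e}]$ of the arc, so the synthesis is well defined.

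For the lower part I would argue by contradiction exactly as in Proposition \ref{propositionMonod}: if $\phi(t_0)\ge0$ at a lower point, Corollary \ref{corollary3}.i forces $\phi>0$ and $u=0$ for all later times outside $\mathcal{T}\cup\mathcal{Z}_1$; the resulting vertical descent must then either fail to reach $\mathcal{T}$ (when $S_1>\ul S_1$), or cross the locus $\mathcal{C}_0=\{\phi=0\}$ of Proposition \ref{proposition2}, or enter $\mathcal{Z}_1$ with $\phi>0$ against Proposition \ref{proposition1} --- each a contradiction with Corollary \ref{corollary2}. Hence $\phi<0$ and $u=1$ is forced. On the upper part Corollary \ref{corollary3}.ii plays the symmetric role: $\phi\le0$ would propagate $\phi<0$, $u=1$ as long as $S_2\ge S_2^\star$, whereas the target lies below the line and requires $\phi(t_f)>0$ by \eqref{transversal} and Corollary \ref{corollary2}; the contradiction is obtained at the downward crossing of $\{S_2=S_2^\star\}$, where $\dot S_2=\nu(S_1)$ is positive whenever $S_1>S_1^{\min}$ (with $\nu$ from \eqref{nu}), a crossing that the region's geometry, via Lemma \ref{Lemma2}, confines to $S_1>S_1^{\min}$. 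Thus $u=0$ is forced down to the line.

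On the singular line itself $\phi\equiv0$, and since $\mu'(S_2^\star)=0$ the identity $\dot\phi=\lambda_2\mu'(S_2)(M-S_1-S_2)$ gives $\dot\phi=0$ automatically; as $\lambda_2>0$ by Corollary \ref{corollary2}, the only control keeping the state on $\{S_2=S_2^\star\}$ is $u=u_s(S_1)$, admissible by the hypothesis $S_1^{\min}\le S_1^\star$. The main obstacle I anticipate is the junction at the end singular state $(S_1^\star,S_2^\star)$: I must prove that the switch off the arc occurs exactly there. Leaving the arc strictly before $S_1^\star$ and continuing with $u=0$ would make the trajectory cross $\mathcal{C}_0$, where $\phi=0$, and then reach $\mathcal{T}$ with the wrong sign of $\phi$; leaving it strictly after $S_1^\star$ is excluded because the continuation would enter $\mathcal{E}_0$, where $u=0$ is already optimal by Proposition \ref{proposition2}. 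Closing this step requires matching the (continuous) switching function across the junction and reconciling the three determinations of $S_1^\star$ in \eqref{end_singular} with the shape of $\mathcal{C}_0$ and the end singular state described in Lemma \ref{Lemma2}; the invariance of $\mathcal{S}^{\max}$ from Lemma \ref{Lemma1} and the admissibility inequality are the tools I would use to make the entry abscissa $S_1^{e}$ and the partition of the lower part precise. Once the switch is pinned to $S_1^\star$, the optimal control agrees with $u^\star$ almost everywhere, which establishes the claim.
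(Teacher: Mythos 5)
Your overall strategy (PMP, the sign-propagation results of Corollaries \ref{corollary2} and \ref{corollary3}, with $\mathcal{E}_0$ and $\mathcal{Z}_1$ disposed of by Propositions \ref{proposition1} and \ref{proposition2}) is the same as the paper's, and your treatment of the lower region $S_2<S_2^\star$ coincides with it. The genuine gap is in the upper region. You locate the contradiction ``at the downward crossing'' of $\{S_2=S_2^\star\}$, claiming that the geometry confines any such crossing to $S_1>S_1^{\min}$, where $\dot S_2=\nu(S_1)>0$ makes it impossible. But the geometry does no such thing: downward crossings at $S_1<S_1^{\min}$ are exactly what saturation permits (this is the definition of $S_1^{\min}$). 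Under the hypothesis $S_1^{\min}\le S_1^{\star}$ and Lemma \ref{Lemma2}, what is true is only that a $u=1$, $\phi<0$ arc (which moves leftward, cannot cross the portion $\{(S_1,S_2^\star)\,\vert\, S_1\in[S_1^\star,M-S_2^\star]\}$ by the barrier computation $\nu(S_1)\ge\nu(S_1^\star)\ge\nu(S_1^{\min})=0$, and cannot cross the graph of $\sigma_2$ by uniqueness of Cauchy solutions for $u=1$) must eventually enter $\mathcal{E}_0$; any later downward crossing, at some $S_1<S_1^{\min}\le S_1^\star$, then takes place \emph{inside} $\mathcal{E}_0$. So the scenario your argument fails to exclude is: keep $u=1$ with $\phi<0$, pass above the barrier, enter $\mathcal{E}_0$ from above, cross the singular line there, and switch to $u=0$ down to the target. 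What kills this scenario is the step the paper makes explicit and which you invoke only for the lower region: on $\mathcal{E}_0$ the switching function must be non-negative, because $u=0$ is optimal there by Proposition \ref{proposition2}, so entering $\mathcal{E}_0$ with $\phi<0$ is already a contradiction. Without this step the upper-region claim is not proved.

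Two secondary points. First, your premise that ``the target lies below the line'' (so that reaching $\mathcal{T}$ forces a downward crossing) fails when $S_2^\star\le \ul S_2$, a case the proposition does cover (by Lemma \ref{Lemma2}.iv one automatically has $S_1^{\min}\le S_1^\star$ there); in that configuration the exclusion must again go through $\mathcal{Z}_1$ and $\mathcal{E}_0$ rather than through a crossing argument. Second, the junction difficulty you anticipate at the end singular state is not where the work lies: once the two one-sided statements are established, the paper handles trajectories on the singular arc in a few lines via Corollary \ref{corollary3} --- leaving the arc downward forces $\phi\ge0$ with $S_2<S_2^\star$ on a time interval, leaving upward forces $\phi\le 0$ with $S_2>S_2^\star$ on a time interval, and each configuration is precisely what the one-sided arguments exclude; no separate matching of $\phi$ across the junction, nor any case analysis on the three determinations of $S_1^\star$, is needed.
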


\proof
When the state is in ${\cal E}_{0}$ or in ${\cal
  Z}_{1}$, Propositions \ref{proposition1} and \ref{proposition2} give
already the optimality of the feedback. We consider now initial states
$S^{0}$ outside these two sets. Let $t_{f}$ be the minimal time to
reach the target.

If $S_{2}^{0}>S_{2}^{\star}$, let us show that the constant control
$u=0$ is optimal until the state reaches $S_{2}=S_{2}^{\star}$ or
${\cal Z}_{1}$ (notice that from outside ${\cal E}_{0}$, it is not
possible to reach ${\cal E}_{0}$ with the constant control $u=0$, or
$S_{2}=S_{2}^{\star}$ is reached before ${\cal E}_{0}$ due to property iii. of Lemma \ref{Lemma2}).
If not, there should exist a time $t<t_{f}$ with $\phi(t)\leq 0$ and
$S_{2}(t)>S_{2}^{\star}$. According to Corollary \ref{corollary3},
the optimal trajectory reaches $S_{2}=S_{2}^{\star}$ or ${\cal
  E}_{0}$ with the constant control $u=1$ and a negative value of $\phi$.
Note that the hypothesis $S_{1}^{\min}\leq S_{1}^{\star}$ implies that
for any  $S_{1}\in (S_{1}^{\star},M-S^{\star}_{2})$, one has at $S_{2}=S^{\star}_{2}$
with the control $u=1$
\[
\dot S_{2} =
f(S_{1})-\mu(S^{\star}_{2})(M-S_{1}-S^{\star}_{2})
\geq
f(S_{1}^{\star})-\mu(S^{\star}_{2})(M-\ul S_{1}^{\star}-S^{\star}_{2})\geq
0.
\]
Consequently the subset $\mathcal{S}\subset \Delta$ defined by:%singular line segment:
\[
{\cal S}:=\{ (S_{1},S^{\star}_{2}) \, \vert \,
S_{1}\in [S_{1}^{\star},M-S_{2}^{\star}]\},
\]
is not reachable from above with the control $u=1$. 
On ${\cal E}_{0}$, $\phi$ has to be non-negative,
as $u=0$ is optimal by Proposition \ref{proposition2}, thus a contradiction.

Consider now an initial condition with $S^{0}_{2} < S^{\star}_{2}$ and
let us show that $u=1$ is optimal until reaching
$S_{2}=S^{\star}_{2}$ or the set ${\cal E}_{0}$ (notice that it is not
  possible to reach ${\cal Z}_{1}$ with the control $u=1$). 
  If not, there should exist a time $t<t_{f}$ with $\phi(t)\geq 0$ and
  $S_{2}(t)<S_{2}^{\star}$. According to Corollary \ref{corollary3},
  the optimal trajectory reaches ${\cal T}$ or ${\cal Z}_{1}$ with the
  constant control $u=0$ and $\phi>0$. This implies that
\begin{itemize}
\item[-] either the trajectory crosses ${\cal C}_{0}$ (when it is not empty) with
  $\phi>0$, thus a contradiction with Proposition \ref{proposition2},
\item[-] either the trajectory reaches ${\cal Z}_{1}$ with $\phi>0$
  and thus again a contradiction with the optimality of $u=1$
  in ${\cal Z}_{1}$ given by Proposition \ref{proposition1}.
\end{itemize}

Finally, we consider initial condition with $S^{0}_{2} =
S^{\star}_{2}$. If the trajectory leaves $S_{2}=S_{2}^{\star}$ before
reaching ${\cal E}_{0}$ or in ${\cal Z}_{1}$, there should exist a
time $t$ with one of the properties:
$S_{2}(t)<S_{2}^{\star}$ and $u(t)=0$, or $S_{2}(t)>S_{2}^{\star}$ and
$u(t)=1$. Corollary \ref{corollary3} implies then one of these
properties have also to be fulfilled on a interval $[t,t^{\prime})$
contradicting the above optimality obtained on both side of $S_{2}=S_{2}^{\star}$.
\qed
\smallskip

Optimal trajectories associated to the feedback control law provided by Proposition \ref{propositionHaldane1} 
are depicted on Fig. \ref{fig4a} and \ref{fig4b} in Section
\ref{numeric-sec}.

%\bigskip

%%%%%%%%%%%%%%%%%%%%%%%   SECTION PLUS DURE NON ADMISSIBLE %%%%%%%%%%%%%%%%%%%%%%%%%%%%%

\section{Optimal Synthesis in the non admissible case}
\label{sec-opti2}
Our aim in this section is to study the optimal synthesis %whenever $S_1^{\min}>S_1^\star$, i.e. % Hypothesis (H2) is not verified. 
%we consider the cases 
in the cases that are not covered by Propositions \ref{propositionMonod}
and \ref{propositionHaldane1}, that is when $S_{2}^{\star}\in
(\underline S_{2},M)$ and $S_{1}^{\min}>S_{1}^{\star}$ (recall from
point {\em iv.} of Lemma \ref{Lemma2} that when $S_{2}^{\star}\leq \underline S_{2}$ %^{\star}$
one has necessarily $S_{1}^{\min}\leq S_{1}^{\star}$ and thus this case
is already covered by Proposition \ref{propositionHaldane1}). Throughout this section, we suppose that Hypotheses {\bf{H0}} and 
{\bf{H1}} are satisfied.

As we already know the optimal synthesis in the {\it{extended target
    set }} $\mathcal{E}_0$ and in $\mathcal{Z}_1$, we only have to determine the optimal
feedback control in the set 
$(\mathcal{D} \backslash
\mathcal{T})\setminus(\mathcal{E}_0\cup\mathcal{Z}_{1})$.
Notice that the switching function should vanish 
on the boundary of $\mathcal{E}_0$.
%curve $\mathcal{C}_0 \cup (\{\underline{S}_1\} \times [\underline{S}_2,S_2^c(\underline{S}_1)])$ (the set $\mathcal{C}_0$ can be empty).
%{\color{blue} Attention : $S_2^c(\underline{S}_1)$ est-il défini lorsque $\mathcal{C}_0=\emptyset$?}

Recall now that the singular feedback 
control satisfies $u_s(M-S_2^\star)= 0$ and that the mapping $S_1 \longmapsto u_s(S_1)$ is decreasing over $(0,M-S_2^\star]$ with $u_s(0^+)=+\infty$. Moreover, $S_1^{\min}$ 
corresponds to the unique point such that $u_s(S_1^{\min})=1$. % (we have $u_s(0^+)=+\infty)$.
So, we now suppose that:
\begin{equation}{\label{casDur}}
S_1^{\min}> S_1^\star.
\end{equation}
Hence, we have $u_s(S_1)>1$ for any value of $S_1$ such that $S_1<S_1^{\min}$ and 
the singular arc is admissible (i.e. $0 \leq u_s\leq 1$) only over the interval $[S_1^{\min},M-S_2^\star]$.
%It follows the part of the singular arc $[S_1^\star,S_1^{\min}]\times\{S_2^\star\}$ is no longer admissible. 
When $S_1<S_1^{\min}$ and $S_2=S_2^\star$, we have a saturating phenomena and singular trajectories no longer exist. The 
part of the singular arc where the inequality $u_s(S_1)> 1$ holds is usually called {\it{barrier}}. 
Hence, singular trajectories cannot reach the {\it{extended target}} $\mathcal{E}_0$, and this will affect the optimal synthesis 
(unlike when $S_1^{\min}\leq S_1^\star$ where singular trajectories can reach $\mathcal{E}_0$,  see Proposition \ref{propositionHaldane1}).

This situation has been encountered in different settings (see e.g. \cite{BayenDCDS,schaettler,schaettler2,ledje2,bonnard3} and references herein).
From a practical point of view, this means that the maximal admissible control does not guarantee a trajectory to stay on the singular arc. 

We will see that a singular trajectory starting at some point $(S_1,S_2^{\star})$ with $S_1 \in (S_1^{\min},M-S_2^\star)$ 
leaves the singular arc with the maximal control $u=1$
before reaching the point $(S_1^{\min},S_2^\star)$. This phenomenon is known as {\it{prior saturation}} \cite{bonnard3,schaettler}.
%and has been  
%In our setting, we will see that such a singular trajectory will leave the singular arc with $u=1$. 
This means that for initial conditions $(S_1,S_2)$ 
on the singular arc (such that $M-S_1-S_2$ is sufficiently small), optimal trajectories are singular only until the point of prior 
saturation. The optimal control then switches to $u=1$ until reaching the extended target. 

For sake of completeness, we  provide a proof of this result adapted to our context and that will be useful to provide the optimal synthesis. %in our context this property of the singular arc.
Recall that a singular trajectory satisfies $S_2(t)=S_2^\star$ and $\dot{S}_1<0$ (provided that it is admissible, i.e. $S_1\in [S_1^{\min},M-S_2^\star)$).

\subsection{Optimality result when $S_1^{\star}>0$}
For technical reasons we suppose in addition that $S_1^{\star}$ satisfies: 
\begin{equation}{\label{hypDifficult}}
S_1^\star>0.
\end{equation}
The optimal synthesis when this assumption is not satisfied is discussed in Section \ref{pl}.
\begin{proposition}
There exists a unique point $\overline{S}_1\in (S_1^{\min},M-S_2^\star)$ such that any singular trajectory defined over the set $[S_1,S'_1]\times \{S_2^\star\}$ with 
$S_1^{\min}\leq S_1<S'_1\leq \overline{S}_1$ is not optimal. 
\end{proposition}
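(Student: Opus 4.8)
The plan is to exploit the fact that the singular arc is a barrier for $S_1<S_1^{\min}$ and to use a comparison of travel times between genuinely singular trajectories and bang trajectories with $u=1$. The key object is the point $(S_1^{\min},S_2^\star)$, the saturation point: at $S_1=S_1^{\min}$ one has $u_s=1$, so a trajectory arriving at the singular arc with $u=1$ is tangent to it there. For $S_1$ slightly above $S_1^{\min}$ the singular control $u_s(S_1)$ is just below $1$, meaning the dynamics with $u=1$ push $S_2$ strictly above $S_2^\star$ (since $\dot S_2 = f(S_1)-\mu(S_2^\star)(M-S_1-S_2^\star)=\nu(S_1)>0$ there by definition of $S_1^{\min}$ and monotonicity of $\nu$). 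The geometric idea is that, for initial points close to the saturation point, staying singular and then switching to $u=1$ near $S_1^{\min}$ is dominated in time by leaving the singular arc earlier, i.e. prior saturation is forced.

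First I would set up the comparison of candidate trajectories issued from a singular point $(S_1',S_2^\star)$ with $S_1'\in(S_1^{\min},M-S_2^\star)$. For each exit abscissa $S_1\in[S_1^{\min},S_1']$ consider the extremal that is singular on $[S_1,S_1']$ and then switches to the bang control $u=1$; parametrise it by the exit point $S_1$ and compute (or estimate) the time $\mathcal{T}(S_1)$ needed to reach the extended target $\mathcal{E}_0$, using that along the singular arc $\dot S_1=-u_s(S_1)f(S_1)=-\mu(S_2^\star)(M-S_1-S_2^\star)$ and that after the switch the trajectory is the unique $u=1$ solution. The claim that ``any singular trajectory on $[S_1,S_1']\times\{S_2^\star\}$ with $S_1^{\min}\le S_1<S_1'\le\overline S_1$ is not optimal'' is then the statement that for exit points below a threshold $\overline S_1$ it is strictly better to switch earlier, so the variation $\tfrac{d\mathcal{T}}{dS_1}$ has a definite sign there. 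I would therefore introduce the time-to-target function as a function of the switching abscissa and differentiate it, reducing the proposition to showing this derivative does not vanish on an interval $(S_1^{\min},\overline S_1)$ and that $\overline S_1$ is the unique point where the relevant first-order optimality condition degenerates.

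The natural way to make this rigorous is via the adjoint equation and the switching function $\phi$ from the Pontryagin Maximum Principle, rather than by brute-force time integration. Along a candidate that switches from singular to $u=1$ at $(S_1,S_2^\star)$, I would track $\phi$ backward from the target: by Corollary \ref{corollary2} we have $\lambda_2>0$ and $\lambda_0>0$, and the singular regime requires $\phi=\dot\phi=0$, which forces $S_2=S_2^\star$. The existence and uniqueness of $\overline S_1$ should come from the requirement that the extremal leaving the singular arc at $S_1$ and running with $u=1$ must connect compatibly to an extremal that is optimal on $\mathcal{E}_0$ (where $\phi\ge0$ by Proposition \ref{proposition2}): matching these at the crossing of the boundary of $\mathcal{E}_0$ gives a scalar equation whose solution is precisely the prior-saturation point $\overline S_1$. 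Monotonicity of $u_s$ and of $\nu$, together with the sign structure $\phi=0\Rightarrow\dot\phi>0$ for $S_2<S_2^\star$ and $\dot\phi<0$ for $S_2>S_2^\star$ established in Corollary \ref{corollary3}, should yield uniqueness.

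The main obstacle I expect is pinning down the uniqueness of $\overline S_1$ and showing it lies strictly inside $(S_1^{\min},M-S_2^\star)$, rather than at an endpoint. Uniqueness requires strict monotonicity of the matching function in $S_1$, which in turn hinges on a careful sign analysis of how the backward-integrated $\phi$ and $\lambda_2$ depend on the exit abscissa; the hypotheses \eqref{casDur} ($S_1^{\min}>S_1^\star$) and \eqref{hypDifficult} ($S_1^\star>0$) are presumably what guarantee that the singular arc genuinely overshoots the extended target, so that a well-defined interior crossing exists. The delicate point is that near the saturation point $S_1^{\min}$ the singular control reaches $1$ and the two regimes (singular and $u=1$) become tangent, so the first-order comparison degenerates there; I would handle the neighbourhood of $S_1^{\min}$ separately, showing that the tangency makes every exit point in a right-neighbourhood of $S_1^{\min}$ suboptimal, and then extend this up to the first abscissa $\overline S_1$ at which optimality of the switch is restored.
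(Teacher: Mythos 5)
Your outline assembles the right raw ingredients (the switching function, $\lambda_2>0$ from Corollary \ref{corollary2}, the sign rules of Corollary \ref{corollary3}, the requirement $\phi=0$ on $\mathcal{C}_0$), but both of the routes you sketch stop exactly where the work lies, and the two steps that actually carry the paper's proof are absent. The first missing step is the existence half: the statement implicitly requires $\overline{S}_1<M-S_2^\star$, i.e.\ that some singular arcs \emph{are} optimal, and neither your time function $\mathcal{T}(S_1)$ nor your ``scalar matching equation'' is ever shown to have a solution -- you simply presume that a well-defined interior crossing exists. The paper produces it constructively, and this is precisely where hypothesis \eqref{hypDifficult} enters (not as a vague guarantee of ``overshoot''): one integrates \eqref{dynS1-dynS2} backward in time with $u=1$ from $(S_1^\star,S_2^\star)$, obtaining the curve $\xi^\star$ of \eqref{curve-xi}, which cannot cross the barrier part of the singular arc nor escape through the invariant line $S_1+S_2=M$, hence must cross the singular arc at some $\tilde S_1>S_1^{\min}$. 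Since $u=1$ trajectories cannot cross one another, $\xi^\star$ is a separatrix: a trajectory leaving the singular arc with $u=1$ at an abscissa larger than $\tilde S_1$ stays above $\xi^\star$, so it enters $\mathcal{E}_0$ at a point with $S_2>S_2^\star$ while carrying $\phi<0$, contradicting the fact that $\phi$ must vanish on the boundary of $\mathcal{E}_0$ (Proposition \ref{proposition2}). Combined with the exclusion of exits with $u=0$, this forces singular arcs starting to the right of $\tilde S_1$ to stay singular at least down to $\tilde S_1$; the set of optimal exit abscissae is then a nonempty interval and $\overline{S}_1$ is its infimum. Note that uniqueness is obtained structurally, as the endpoint of this interval -- not from monotonicity of any matching function, which you would still have to prove and for which no mechanism is offered.

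The second missing step is the non-optimality half near the saturation point. Your proposed mechanism -- ``the tangency makes the first-order comparison degenerate'' -- is not an argument and is not what makes these arcs suboptimal. The paper's reasoning is a global continuation argument: if a singular trajectory reaches $(S_1^{\min},S_2^\star)$, then immediately afterwards $S_2<S_2^\star$ for \emph{every} admissible control (barrier property), so whenever $\phi=0$ one has $\dot\phi=\lambda_2\mu'(S_2)(M-S_1-S_2)>0$; hence $\phi>0$ and $u=0$ for all later times, with no possibility of ever switching back to $u=1$. Following this $u=0$ trajectory, either $S_1^{\min}>\underline S_1$ and the target is never reached, or the vertical descent crosses $\mathcal{C}_0$ with $\phi>0$, contradicting Proposition \ref{proposition2}. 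Nothing in your outline performs this continuation. Finally, your primary route -- signing $d\mathcal{T}/dS_1$ -- faces two obstacles you do not address: the $u=1$ flow has no closed form (the paper itself points out that $S_2$ as a function of $S_1$ solves a non-autonomous equation, which is why even the continuity of $\mathcal{C}_1$ is left unproved), and trajectories with different exit points reach $\partial\mathcal{E}_0$ at different points, so the correct quantity is time-to-$\mathcal{E}_0$ \emph{plus} the (unknown) value function along $\partial\mathcal{E}_0$. Your backward-$\phi$ matching idea is essentially how the paper computes $\mathcal{C}_1$ numerically in Section \ref{numeric-sec}, but it cannot, as stated, substitute for the existence and exclusion arguments above.
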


\begin{proof}
Consider a singular trajectory starting at an initial point $S_1^0>S_1^{\min}$. If the trajectory reaches the point $(S_1^{\min},S_2^\star)$ at time $t_0$, then it  satisfies $\phi(t_0)=\dot{\phi}(t_0)=0$. Now, there exists $\eps>0$ small enough such that for $t\in (t_0,t_0+\eps]$, the trajectory satisfies $S_2(t)<S_2^\star$. Using that $\dot{\phi}=\lambda_2\mu'(S_2)(M-S_1-S_2)$, one obtains that $\dot{\phi}>0$ in $(t_0,t_0+\eps]$ (recall that $\lambda_2>0$), and therefore we have $u=0$ in $[t_0,t_0+\eps]$. Hence, the trajectory cannot switch to $u=1$ at any time $t\geq t_0+\eps$ as we would have $\dot{\phi}(t)\leq 0$  in contradiction with $\dot{\phi}(t)=\frac{\mu'(S_2(t))}{\mu(S_2(t))}>0$ and $\phi(t)=0$.
It follows that for any $t\geq t_0+\eps$, we have $u=0$. 
Then, either the trajectory does not reach the target (if $S_1^{\min}>\underline{S}_1$) or the trajectory 
cannot satisfy $\phi=0$ on $\mathcal{C}_0$ (if $S_1^\star<S_1^{\min}\leq \underline{S}_1$). 
%Finally, observe that in the case where we have $S_1^\star<S_1^{\min}= \underline{S}_1$, 
%then we have a contradiction following the proof of Proposition \ref{propositionHaldane1}.
We have thus proved that a singular trajectory connecting a point $(S_1,S_2^\star)$ with $S_1 \in (S_1^{\min},M-S_2^{\star})$ 
to the point $(S_1^{\min},S_2^\star)$ is not optimal.

By a similar reasoning, we obtain that an optimal trajectory which is singular over a time interval 
will not leave the singular arc at some point $S_1>S_1^{\min}$ with $u=0$.
Hence, there exists a point $\overline{S}_1\in (S_1^{\min},M-S_2^{\star}]$ such that any singular extremal trajectory starting from 
 $(S_1,S_2^\star)$ with $S_1\in (\overline{S}_1,M-S_2^{\star})$ will switch to $u=1$ at the point $(\overline{S}_1,S_2^\star)$.
 %({\color{blue}Question: peut-on parler de control optimal en un point? Ne serait-ce pas mieu de dire que $u=1$ sur un interval de temps $[t_0,t_0+\eps]$ avec en $t_0$ la solution égale a $(\overline{S}_1,S_2^\star)$?)}

Now using \eqref{hypDifficult}, one can consider the solution of \eqref{dynS1-dynS2} backward in time with $u=1$ from $(S_1^\star,S_2^\star)$. 
As we have $\dot{S}_1>0$, we can parameterize this curve $(S_1(\cdot),S_2(\cdot))$ 
as the graph of a $C^1$-mapping $S_1\longmapsto \xi^\star(S_1)$ defined for $S_1\geq S_1^\star$.
Hence, $\xi^\star$ is the unique solution of the Cauchy problem:
\begin{equation}{\label{curve-xi}}
\frac{d s_2}{d s_1}=-1+\frac{\mu(s_2)(M-s_1-s_2)}{f(s_1)}, \; \; s_2(S_1^\star)=S_2^\star.
\end{equation}
From the definition of $S_1^{\min}$, this trajectory cannot intersect the singular arc at some point $S_1\in (S_1^\star,S_1^{\min})$. Moreover, the trajectory cannot leave $\mathcal{D}$ through the set $\{(S_1,S_2) \in \R_+\times \R_+ \; ; \; S_1+S_2=M\}$ that is invariant by \eqref{dynS1-dynS2}. Hence, it will cross the singular arc at some point $\tilde{S}_1\in (S_1^{\min},M-S_2^\star)$. 

Fix a point $\hat S_1\in (\tilde S_1,M-S_2^{\star})$. For $S_1 \in  [S_1^{\min},\hat S_1]$, let us denote by $\gamma_{S_1}$ a singular extremal 
trajectory connecting $(\hat S_1,S_2^\star)$ to the point $(S_1,S_2^\star)$, and define 
a set $\mathcal{S}$ by:
$$
\mathcal{F}:=\{S_1 \in [S_1^{\min},M-S_2^\star]\; ; \; \gamma_{S_1} \; \mathrm{is} \; \mathrm{optimal} \; \mathrm{over} \; [S_1,\hat S_1]\}.
$$
Let us show that $\mathcal{F}$ is non-empty. Consider a singular trajectory starting at $(S_1,S_2^{\star})$ with $\hat S_1\leq S_1<M-S_2^\star$. 
We know that it is not optimal for the singular trajectory to leave the singular arc with $u=0$. 
Moreover, if the singular trajectory leaves the singular arc with $u=1$ before reaching $\tilde{S}_1$, then the trajectory cannot switch on the extended target $\mathcal{E}_0$, and we have a contradiction. In fact, such a trajectory necessarily reaches $\mathcal{E}_0$ at a point $S_2>S_2^\star$ by definition of $\xi^\star$. At this point, the switching function is such that $\phi<0$ which is not possible ($\phi$ has to be zero on the boundary of $\mathcal{E}_0$). 

We have thus proved that for any $S_1$ such that $\hat S_1\leq S_1<M-S_2^\star$, a singular trajectory starting at $(S_1,S_2^{\star})$ is optimal at least until reaching the point 
$(S_1^{\min},S_2^{\star})$, so $\mathcal{S}\not= \emptyset$. 
%Similarly, if it leaves the singular arc with $u=1$, then the trajectory cannot switch to $u=0$ (by using that 
%$\dot{\phi}(t)=\frac{\mu'(S_2(t))}{\mu(S_2(t))}$ at a switching point $t$) and therefore it will not reach the target (\textcolor{red}{regarder le système avec $Q_{max}$ en rétrograde depuis $(S_1^\star,S_2^\star))$ ; lemme à faire à part}). 
Now, the set $\mathcal{F}$ is clearly an interval and we take for $\overline{S}_1$ the infimum of $\mathcal{F}$. This proves the result as we know that 
$\overline{S}_1>S_1^{\min}$.
%Now, take the inf $(\overline{S}_1,S_2^\star)$ is 
%By contradiction, suppose that exist two points on the singular arc
%$(\overline{S}_1,S_2^\star)$ and $(\tilde{S}_1^{opt},S_2^\star)$ such that
%the trajectory $B_{Q_{max}}$ starting from $(\tilde{S}_1^{opt},S_2^\star)$ reaches the target $\mathcal{E}_0$ in
%the same amount of time than the trajectory $\mathcal{S}_{[\tilde{S}_1,S_1]} - B_{Q_{max}}$.
%\begin{itemize}
%\item the trajectory $\gamma_1$ such that $u=1$ from $(\tilde{S}_1^{opt},S_2^\star)$ until reaching  $\mathcal{E}_0$,
%\item $\gamma_2$ satisfies $u=u_s(S_1)$ on $[{S}_1^{opt},\tilde{S}_1^{opt}]$ and then $u=1$ until reaching $\mathcal{E}_0$.
%\end{itemize}
%Let $\tau$ the time of the singular arc $[{S}_1^{opt},\tilde{S}_1^{opt}]$. One has 
%the time $\tau_1$ to reach $\mathcal{E}_0$ from $(\tilde{S}_1^{opt},S_2^\star)$ with $u=1$ is equal to the time 
%to reach the target from the same initial point with a singular arc on $[{S}_1^{opt},\tilde{S}_1^{opt}]$ (of time $\tau'_2$) and then a bang arc 
%$u=1$ (of time $\tau''_2$) until reaching $\mathcal{E}_0$. This can be written $\tau_1=\tau_2=\tau'_2+\tau''_2>\tau''_2$
%$\tau_2>$
\qed
\end{proof}

Notice that \eqref{hypDifficult} is crucial for defining the point $(\overline{S}_1,S_2^\star)$ of 
{\it{prior saturation}}. 
The next proposition characterizes the number of switching times for trajectories starting above the singular arc with $S_1\in (S_1^\star,S_1^{\min})$ and it 
will allow us to define the switching curve emanating from $(\overline{S}_1,S_2^\star)$.
\begin{proposition}{\label{prop1-synthesis}}
Consider a point $S^0=(S_1^0,S_2^0)$ such that $S_1^0\in (S_1^\star,\overline{S}_1)$ and $S_2^0> \xi^\star(S_1^0)$. 
Then, any optimal trajectory $\gamma$ % $S(\cdot)=(S_1(\cdot),S_2(\cdot))$ 
steering $S^0$ to the extended target ${\cal E}_{0}$ has a unique switching time $t_0$ such that $S_2(t_0)> S_2^\star$ and we have $u(t)=0$ for $t\in [0,t_0]$ and 
$u(t)=1$  for $t>t_0$.
\end{proposition}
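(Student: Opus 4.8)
The plan is to read off the control structure from the sign of the switching function $\phi$, using Corollaries \ref{corollary2} and \ref{corollary3} together with the geometry of the curve $\xi^\star$ and of $\partial\mathcal{E}_0$. First I would record the standing facts. Since $S^0\notin\mathcal{E}_0\cup\mathcal{Z}_1$, Corollary \ref{corollary2} gives $\lambda_2>0$ and $\lambda_0>0$ along $\gamma$. Moreover, as $S_1^0\in(S_1^\star,\overline{S}_1)$ with $\overline{S}_1\le\tilde S_1$ (recall from the previous proof that the saturation point satisfies this) and $\xi^\star>S_2^\star$ on $(S_1^\star,\tilde S_1)$, we get $S_2^0>\xi^\star(S_1^0)>S_2^\star$; thus $\gamma$ starts strictly above the singular arc.

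Second, I would show $u=0$ at $t=0$. Suppose not, so $\phi(0)\le 0$. Since $S_2^0>S_2^\star$, Corollary \ref{corollary3}(ii) forces $\phi<0$, i.e. $u=1$, as long as $S_2\ge S_2^\star$. By uniqueness the resulting $u=1$ orbit stays above $\xi^\star$, hence remains in $\{S_2\ge S_2^\star\}$ and crosses $S_1=S_1^\star$ into $\mathcal{E}_0$ at a point with $S_2>S_2^\star$. There $\phi<0$, contradicting the fact that $\phi$ cannot be negative on $\partial\mathcal{E}_0$ (recall $\phi=0$ on $\mathcal{C}_0$ by Proposition \ref{proposition2} and $\phi>0$ in the interior of $\mathcal{E}_0$). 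Hence $\phi(0)>0$ and $u=0$ initially; while $u=0$ and $S_2>S_2^\star$ one has $\dot\phi=\lambda_2\mu'(S_2)(M-S_1-S_2)<0$, so $\phi$ is strictly decreasing and $S_1\equiv S_1^0$.

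Third, I would produce the unique switch. As $\phi$ decreases it must vanish at some first time $t_0$ with $S_2(t_0)>S_2^\star$: if instead $\phi$ stayed positive down to $S_2=S_2^\star$, then Corollary \ref{corollary3}(i) would keep it positive below $S_2^\star$ and $u=0$ would persist, so the vertical trajectory $S_1\equiv S_1^0$ would either reach $\mathcal{C}_0$ (when $S_1^0\le\underline S_1$) with $\phi>0$, contradicting $\phi=0$ there, or never meet the target (when $S_1^0>\underline S_1$), again a contradiction. The same reasoning excludes a first zero of $\phi$ at or above $\xi^\star$, since the ensuing $u=1$ orbit would hit $\partial\mathcal{E}_0$ at $S_2>S_2^\star$ with $\phi<0$. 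Therefore $S_2^\star<S_2(t_0)<\xi^\star(S_1^0)$ and $u$ switches from $0$ to $1$ at $t_0$.

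Finally, I would verify that $u=1$ is kept until $\gamma$ reaches $\mathcal{E}_0$, so that the switch is unique. As $\phi(t_0)=0$ and $S_2(t_0)>S_2^\star$, Corollary \ref{corollary3}(ii) gives $\phi<0$ while $S_2\ge S_2^\star$; since the switch lies below $\xi^\star$, the $u=1$ orbit stays below $\xi^\star$, dips below $S_2^\star$, and reaches $\mathcal{C}_0$, where $\phi=0$ is admissible. A switch back to $u=0$ below $S_2^\star$ is impossible, since Corollary \ref{corollary3}(i) would then keep $\phi>0$ and the subsequent vertical descent would meet $\mathcal{C}_0$ with $\phi>0$, again contradicting Proposition \ref{proposition2}. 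Hence $\phi>0$ on $[0,t_0)$ and $\phi<0$ on $(t_0,t_f]$, so $t_0$ is the unique switch, with $u=0$ before and $u=1$ after. I expect the delicate part to be the boundary analysis on $\partial\mathcal{E}_0$—in particular distinguishing the vertical part $\{S_1^\star\}\times(S_2^\star,\cdot)$, on which arrival with $\phi<0$ must be ruled out, from $\mathcal{C}_0$, on which $\phi=0$ is required—together with the attendant verification that the switch is strictly above $S_2^\star$ and strictly below $\xi^\star$.
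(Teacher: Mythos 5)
Your proof is correct and follows essentially the same route as the paper's: the same sign analysis of $\phi$ via Corollaries \ref{corollary2} and \ref{corollary3}, the comparison with the curve $\xi^\star$ (using $\overline{S}_1\leq\tilde S_1$), and the contradiction with the required sign of $\phi$ on $\mathcal{C}_0$ and on the boundary of $\mathcal{E}_0$. If anything, you make explicit two points the paper leaves implicit (the strict monotonicity of $\phi$ along the initial $u=0$ arc, and the exclusion of a switch back to $u=0$ below the singular arc before reaching $\mathcal{E}_0$), while sharing the paper's brevity on the borderline case of $\phi$ vanishing exactly at $S_2=S_2^\star$.
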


\begin{proof}
First, recall that an extremal trajectory cannot switch from $u=1$ to $u=0$ at a  time $t_0$ such that 
$S_2(t_0)>S_2^\star$. Hence, the number of switching times of $\gamma$ before reaching the singular arc is either $0$ or $1$. 
Now, take $S_1^0\in (S_1^\star,\overline{S}_1)$. By using a similar reasoning as in the previous proof, we know that if we have $u=0$ until reaching the singular arc, then $\gamma$ is not optimal. Notice that $\overline{S}_1\leq \tilde S_1$ as the trajectory starting from $(\tilde S_1,S_2^{\star})$ with $u=1$ until $\mathcal{E}_0$ is not optimal.
It follows that if the control switches to $u=1$ at a point $(S_1,S_2)$ with $S_2>\xi^\star(S_1)$, 
then the trajectory will reach the set $\mathcal{E}_0$ at a point $S_2>S_2^\star$ (using that $S_2^0>\xi^\star(S_1^0)$), 
and we have a contradiction with $\phi=0$ at the boundary of $\mathcal{E}_0$. 
Hence, there exists a unique switching time $t_0$ from $u=0$ to $u=1$ and  $S_2(t_0)> S_2^\star$.
\qed
\end{proof}
%% QUESTION MATTHIEU
%{\color{blue}Question :  Dans la proposition précédente on suppose$S^0=(S_1^0,S_2^0)$ tq $S_1^0\in (S_1^\star,\overline{S}_1)$ et $S_2^0> \xi^\star(S_1^0)$. Et dans la preuve j'ai l'impression que l'on suppose implicitement que $S_2(\cdot)$ est toujours au dessus de l'AS. Je suis d'accord que c'est le cas pour $S_1\in (S_1^\star,S_1^{\min})$, mais lorsque $S_1\in (S_1^{\min},\overline{S}_1)$ ne pourrait-on pas avoir $\xi^\star(S_1)<S_2^\star$? Dans ce cas l'argumentation est différente, a savoir que si $u=0$ on ne switcherait pas et on n'atteindrai pas la cible. Donc peut-être rajouter que si on est dans ce cas on a necesaairment $u=Qmax$ jusqu'a traverser l'AS pour avoir $S_2(\cdot)>S_2^\star$ puis on se retrouve dans le cas traité dans la preuve.}
%% 
%%%  REPONSE TERENCE en fait on utilise que S1opt est a gauche de \tilde S1, c'est vrai que je l'ai pas dit. 

For $S_1\in (S_1^\star,\overline{S}_1)$, we denote by $S_2:=\zeta(S_1)\geq S_2^\star$ the unique switching point from $u=0$ to $u=1$ 
from an optimal trajectory starting at some point $(S_1,\xi^\star(S_1))$, 
and let $\mathcal{C}_1$ be the {\it{switching curve}} defined by:
$$
\mathcal{C}_1=\{(S_1,\zeta(S_1)) \; ; \; S_1 \in (S_1^{\star},\overline{S}_1)\}.
$$
From the classification of {\it{frame points}} and {\it{frame curves}} \cite{bosc}, 
the point of prior saturation $(\overline{S}_1,S_2^\star)$ is a frame point of type $(CS)_2$ 
at the intersection between 
the singular set and a switching curve. In fact, singular trajectories stop to be optimal
at this point and leave the singular set with the maximal control $u=1$. 
Therefore, %we have $\sigma(S_1) \rightarrow S_2^{\star}$ whenever $S_1 \rightarrow \overline{S}_1$, and we
we can extend $\zeta$ at the point $\overline{S}_1$ setting $\zeta(\overline{S}_1)=S_2^\star$.
\begin{remark}
(i) We can show by the arguments above that the switching curve passes through the point $(S_1^\star,S_2^\star)$ i.e. $\lim_{S_1 \rightarrow S_1^\star} \zeta(S_1)=S_2^\star$.
\\
(ii) We believe that the $\mathcal{C}_1$ is continuous. Unfortunately, this question seems 
difficult to address as we cannot easily obtain an implicit equation for $\mathcal{C}_1$ (such as for the $\mathcal{C}_0$). 
The difficulty comes from the fact that the initial system with $u=1$ leads to a non-autonomous differential equation for $S_2$ as a function of $S_1$.
Nevertheless, this property is not crucial in order to obtain the optimal synthesis.  
%(ii) When $S_1^\star=0$, then we can show similarly that the switching curve passes through a point $(0,\tilde{S_2})$ where $\tilde{S}_2\geq S_2^\star$.
\end{remark}
%Let us now show that $\mathcal{C}_1$ is continuous. We have the following lemma. 
%\begin{lemma}
%The mapping $\sigma$ is continuous over $(S_1^\star,\overline{S}_1)$.
%\end{lemma}
%\begin{proof}
%For $s=(s_1,s_2)\in (S_1^\star,\overline{S}_1) \times ((S_2^\star,M-S_1))$, let us consider the 
%mapping $T:(s_1,s_2) \longmapsto T(s_1,s_2)$  defined as the time of the trajectory steering the system from $(s_1,M-s_1)$ to the extended target with the control $u=0$ until the point $(s_1,s_2)$ and then the control $u=1$ until reaching $\mathcal{E}_0$. 
%Clearly, we have that for any $s_1\in (S_1^\star,\overline{S}_1)$, $s_2=\sigma(s_1)$ minimizes the mapping 
%$s_2\longmapsto T(s_1,s_2)$ over $(S_2^\star,M-S_1)$.  In other words,  the point
%$(s_1,\sigma(s_1))$ is the unique minimum of the problem
%$$
%\min_{s_2 \in (S_2^\star,M-s_1)} T(s_1,s_2)
%$$
%Hence we have $\frac{\partial T}{\partial s_2}(s_1,s_2)=0$ if and only if $s_2=\sigma(s_1)$. Now, let us prove that 
%\begin{equation}{\label{notNull}}
%\frac{\partial^2 T }{\partial s_2^2} (s_1,s_2)\not=0.
%\end{equation}
%The continuity of $\sigma$ is then a direct consequence of the implicit function Theorem. 
%\qed
%\end{proof}

\begin{proposition}{\label{prop2-synthesis}}
For any $S_1^0\in [\overline{S}_1,M-S_2^\star)$ and $S_2^0\geq S_2^\star$, any optimal trajectory starting at $(S_1^0,S_2^0)$ satisfies 
$u=0$ until reaching the singular arc. 
\end{proposition}
\begin{proof}
Suppose by contradiction that an optimal trajectory starting at some point $(S_1^0,S_2^0)$ with $S_1^0\in [\overline{S}_1,M-S_2^\star]$ and $S_2^0\geq S_2^\star$ satisfies $u=1$ over a time interval $[0,\tau]$, for some $\tau>0$. Recall that $\zeta(S_1^{\min})=S_2^\star$.
As $\overline{S}_1>S_1^{\min}$, there exists $\tau'>0$ such that we have 
$u=1$ over the time interval $[0,\tau']$ and such that $S_2(\tau')>\zeta(S_1(\tau'))$ (in particular, this trajectory cannot intersect the singular arc at some point $(S_1,S_2^\star)$ with $S_1\geq \overline{S}_1$). 
Thus we obtain a contradiction with Proposition \ref{prop1-synthesis}. 
In fact, we know that for $S\in \mathcal{D}$ such that $S_1<\overline{S}_1$ and $S_2>\zeta(S_1)$, one has necessarily $u=0$.
\qed
\end{proof}

It remains to study the case where initial conditions are taken below the singular arc.
\begin{proposition}{\label{prop3-synthesis}}
For any initial conditions $S^0=(S_1^0,S_2^0)\in \mathcal{D}\backslash \mathcal{E}_0$ and such that $S_2^0 <S_2^\star$, we have $u=1$.
\end{proposition}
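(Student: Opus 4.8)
The plan is to show that starting strictly below the singular arc (that is, with $S_2^0 < S_2^\star$), the optimal control must be $u=1$ on the whole trajectory until the state either reaches the singular arc $S_2=S_2^\star$, enters $\mathcal{Z}_1$, or reaches the target $\mathcal{T}$. The natural approach mirrors the argument already used in the proof of Proposition \ref{propositionHaldane1} for initial data with $S_2^0 < S_2^\star$, relying on the sign analysis of $\phi$ furnished by Corollary \ref{corollary3}(i). First I would invoke Corollary \ref{corollary2} to record that along any optimal trajectory from outside $\mathcal{Z}_1$ one has $\lambda_2>0$ and $\lambda_0>0$, so the switching-function dynamics $\phi=0 \Rightarrow \dot\phi = \lambda_2\mu'(S_2)(M-S_1-S_2)$ are usable, and in particular $\phi=0$ with $S_2<S_2^\star$ forces $\dot\phi>0$.

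Next I would argue by contradiction. Suppose $u=1$ is not optimal on an initial portion of the trajectory; then there is a first time $t$ with $\phi(t)\geq 0$ while $S_2(t)<S_2^\star$ and the state is still outside $\mathcal{T}\cup\mathcal{Z}_1$. By Corollary \ref{corollary3}(i), $\phi(s)>0$ for all subsequent times $s>t$ for which the state remains outside $\mathcal{T}\cup\mathcal{Z}_1$, so $u=0$ is enforced and, since $\dot S_2 = -\mu(S_2)(M-S_1-S_2)<0$ under $u=0$, the trajectory stays strictly below $S_2^\star$ and eventually reaches $\mathcal{T}$ or the boundary of $\mathcal{Z}_1$ with $\phi>0$. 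I would then derive the contradiction exactly as in Proposition \ref{propositionHaldane1}: either the trajectory crosses $\mathcal{C}_0$, which by Proposition \ref{proposition2}(ii) is a locus where $\phi=0$ --- impossible if $\phi>0$ there --- or it reaches $\mathcal{Z}_1$ with $\phi>0$, contradicting the optimality of $u=1$ inside $\mathcal{Z}_1$ established in Proposition \ref{proposition1}. In either case the assumption fails, so $\phi<0$ (hence $u=1$) is forced until the trajectory leaves the region $\{S_2<S_2^\star\}\setminus(\mathcal{T}\cup\mathcal{Z}_1)$.

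The one point needing care, and the main obstacle, is the bookkeeping at the region's exit: I must verify that while $S_2<S_2^\star$ the control $u=1$ cannot drive the state into $\mathcal{E}_0$ prematurely (entering $\mathcal{E}_0$ from the region below the singular arc should be excluded, as in the parenthetical remarks of Proposition \ref{propositionHaldane1}), and that the only admissible exits are onto $S_2=S_2^\star$, into $\mathcal{Z}_1$, or into $\mathcal{T}$. Here the geometry of $\mathcal{C}_0$ --- which by Lemma \ref{Lemma2}(iii) lies strictly below $S_2^\star$ when $S_2^\star\in(\ul S_2,M)$ --- together with the fact that $u=1$ increases $S_2$ whenever $f(S_1)>\mu(S_2)(M-S_1-S_2)$, ensures the trajectory moves toward the singular level rather than doubling back below $\mathcal{C}_0$. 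I expect the sign argument itself to be routine given Corollary \ref{corollary3}; the only genuine subtlety is confirming that the contradiction with $\mathcal{C}_0$ or $\mathcal{Z}_1$ is reached before any other boundary of $\mathcal{D}\setminus\mathcal{E}_0$ is encountered, which follows from the invariance properties of $\{0\}\times[0,M]$ and $N$ recorded in Section \ref{sec-statement} and from the monotonicity of $S_1(\cdot)$ under $u=1$.
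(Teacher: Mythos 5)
Your argument follows the same mechanism as the paper's own proof (and as the proof of Proposition \ref{propositionHaldane1}): use $\lambda_2>0$ from Corollary \ref{corollary2} so that $\phi=0$ with $S_2<S_2^\star$ forces $\dot\phi>0$, conclude via Corollary \ref{corollary3}(i) that once $\phi\geq 0$ below the arc the control $u=0$ is locked in, and then look for a contradiction. But one step of yours fails: the claim that the locked-in $u=0$ trajectory ``eventually reaches $\mathcal{T}$ or the boundary of $\mathcal{Z}_1$ with $\phi>0$.'' Under $u=0$ one has $\dot S_1=0$, so the trajectory is a vertical segment along which $S_2$ decreases (tending to $0$ only asymptotically). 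It can meet $\mathcal{T}$ or $\mathcal{C}_0$ only if $S_1\leq \ul S_1$, and it can meet $\mathcal{Z}_1$ only if $S_1\leq \ul \sigma_1$. If the lock-in occurs at a point with $S_1\geq \ul\sigma_1$ (such points exist, e.g.\ $S_2$ small and $S_1$ large, below the arc, since $\ul\sigma_1<M$), the trajectory never meets $\mathcal{T}$, $\mathcal{C}_0$ or $\mathcal{Z}_1$, and neither of your two contradictions is available. The correct contradiction in that regime is simpler, and it is the one the paper states first: $S_1$ is frozen at a value strictly greater than $\ul S_1$, whereas the target requires $S_1\leq \ul S_1$, so the trajectory never reaches the target, contradicting optimality. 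Your closing appeal to the invariance of $\{0\}\times[0,M]$ and $N$ and to ``the monotonicity of $S_1(\cdot)$ under $u=1$'' cannot repair this, because the problematic phase is governed by $u=0$, where $S_1$ is constant.

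Once that case is added, your proof is complete and essentially coincides with the paper's, which splits differently: for $S_1^0>\ul S_1$ the paper invokes non-attainment of the target (this uniformly covers entry into $\mathcal{Z}_1$ as well, since the no-switch property persists inside $\mathcal{Z}_1$ by $\lambda_2>0$, so your detour through Proposition \ref{proposition1} is not even needed); for $S_1^0\leq \ul S_1$ the vertical $u=0$ trajectory must cross $\mathcal{C}_0$, where $\phi=0$ by Proposition \ref{proposition2}, while Corollary \ref{corollary3}(i) forces $\phi>0$ there. Finally, the ``bookkeeping'' you worry about in your last paragraph --- excluding premature entry into $\mathcal{E}_0$ under $u=1$ --- is not something this proposition requires: it only asserts $u=1$ at points of $\mathcal{D}\setminus\mathcal{E}_0$ below the arc, and reaching the boundary of $\mathcal{E}_0$ (where the synthesis switches to $u=0$) is the intended behaviour, not something to exclude.
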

\begin{proof}
We know that any optimal trajectory cannot switch from $u=0$ to $u=1$ at a point 
$S^0=(S_1^0,S_2^0)\in \mathcal{D}\backslash \mathcal{E}_0$ and such that $S_2^0 <S_2^\star$. 
Suppose now that an optimal trajectory starting at 
some point $S^0=(S_1^0,S_2^0)\in \mathcal{D}\backslash \mathcal{E}_0$ with $S_2^0 <S_2^\star$
satisfies $u=0$ over a time interval $[0,\tau]$. 
If $S_0>\underline{S}_1$, then the trajectory does not reach the target and we have a contradiction. 
%If $S_0=\underline{S}_1$, then we have a contradiction following the proof of Proposition \ref{propositionHaldane1}. 
Finally, if $S_0\leq \underline{S}_1$ (this case can be empty if $\mathcal{C}_0$ does not exist), then one should have $u=0$ until reaching $\mathcal{C}_0$, and we would have a contradiction with the fact that $\phi=0$ on $\mathcal{C}_0$.
\qed
\end{proof}

The next theorem summarizes the results of Propositions \ref{prop1-synthesis}, \ref{prop2-synthesis} and \ref{prop3-synthesis} and provides an optimal feedback control of the problem whenever \eqref{casDur}-\eqref{hypDifficult} are satisfied.% not satisfied.

\begin{theo}{\label{main1}}
Assume that Hypotheses H0 and H1 are fulfilled. In addition, suppose that \eqref{casDur}-\eqref{hypDifficult} are satisfied. Then, an optimal feedback control steering the system in minimal time to the target is given by %$u^{\star}$ where $u^{\star}$ is defined by:
\begin{equation}{\label{feedback1}}
u^{\star}[S_1,S_2]:=
\left|
\begin{array}{ll}
0 & \; \; \mathrm{if} \; \; (S_1,S_2)\in \mathcal{E}_0 \; \; \mathrm{or} \; \; S_2\geq \max(\zeta(S_1),S_2^\star),\\
u_s(S_1) & \; \; \mathrm{if} \; \;  S_1\in [\overline{S}_1,M-S_2^\star)\; \; \mathrm{and} \; \; S_2=S_2^\star,\\
1 & \; \; \mathrm{if} \; \;  (S_1,S_2)\notin \mathcal{E}_0 \; \; \mathrm{and} \; \; S_2<S_2^\star.
\end{array}
\right.
\end{equation}
\end{theo}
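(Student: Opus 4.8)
The plan is to prove Theorem \ref{main1} as a gluing argument: each of Propositions \ref{proposition1}, \ref{proposition2}, \ref{prop1-synthesis}, \ref{prop2-synthesis} and \ref{prop3-synthesis} already pins down the optimal control on one piece of $\mathcal{D}\setminus\mathcal{T}$, so it remains only to check that these pieces cover the whole domain and that on each of them the feedback $u^\star$ returns exactly the control identified there. Since the velocity set of \eqref{dynS1-dynS2} is a segment (the dynamics are affine in $u\in[0,1]$) and the target is reachable in finite time from every point of $\mathcal{D}$ (attainability was established at the start of Section \ref{sec-statement}), an optimal control exists by Filippov's theorem; by the Pontryagin Maximum Principle every optimal trajectory is extremal, hence it suffices to show that every optimal trajectory follows $u^\star$, region by region.

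First I would dispose of the two regions already settled. On $\mathcal{E}_0$ the feedback returns $u=0$, which is optimal by Proposition \ref{proposition2}. On $\mathcal{Z}_1$ one has $S_2\le\sigma_2(S_1)\le\ul S_2<S_2^\star$ and $(S_1,S_2)\notin\mathcal{E}_0$, so the third line returns $u=1$, optimal by Proposition \ref{proposition1}. It then remains to treat the complement $(\mathcal{D}\setminus\mathcal{T})\setminus(\mathcal{E}_0\cup\mathcal{Z}_1)$, which I split according to the sign of $S_2-S_2^\star$ and the position of $S_1$ relative to $\overline{S}_1$ and the switching curve $\mathcal{C}_1$.

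Next comes the case analysis. When $S_2<S_2^\star$ (outside $\mathcal{E}_0$), Proposition \ref{prop3-synthesis} gives $u=1$, matching the third line. When $S_2\ge S_2^\star$ and $S_1\ge\overline{S}_1$, the function $\zeta$ is not defined, so $\max(\zeta(S_1),S_2^\star)=S_2^\star$ and the first line returns $u=0$; Proposition \ref{prop2-synthesis} shows $u=0$ is optimal until the state meets the singular locus $\Delta$ at some $(S_1',S_2^\star)$ with $S_1'\ge\overline{S}_1$, after which the admissible singular control $u_s$ of \eqref{singular_feedback} (second line) keeps the state on $\Delta$ down to the prior-saturation point $(\overline{S}_1,S_2^\star)$, where it leaves with $u=1$. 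When $S_2\ge S_2^\star$ and $S_1\in(S_1^\star,\overline{S}_1)$, points above the switching curve ($S_2\ge\zeta(S_1)$) fall under the first line: since $\dot S_1=0$ for $u=0$, the state descends vertically and switches to $u=1$ exactly on $\mathcal{C}_1$, which is precisely the unique extremal structure of Proposition \ref{prop1-synthesis}, and the ensuing $u=1$ arc reaches the boundary of $\mathcal{E}_0$ at the level $S_2^\star$ with $\phi=0$.

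The delicate point, which I expect to be the main obstacle, is the remaining strip $\{S_1\in(S_1^\star,\overline{S}_1),\ S_2^\star\le S_2<\zeta(S_1)\}$ lying below $\mathcal{C}_1$: there $u=1$ must be shown to be optimal. I would argue that these points lie on the $u=1$ extremal arcs issued from $\mathcal{C}_1$ and from the prior-saturation point, so that $\phi<0$; alternatively one reruns the contradiction scheme of Proposition \ref{prop1-synthesis}, using that an extremal cannot switch from $u=1$ to $u=0$ while $S_2>S_2^\star$ (Corollary \ref{corollary3}) and that reaching $\partial\mathcal{E}_0$ above $S_2^\star$ would force the impossible sign $\phi<0$ there. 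The genuine difficulty is that $\zeta$ is known only implicitly (no closed-form equation, and even its continuity is open, see the Remark), so the boundary between the $u=0$ and $u=1$ regions above the singular level cannot be described explicitly and one must rely entirely on the qualitative switching rules. Once this strip is handled, I would close the proof by a global consistency check: the regions overlap only on the measure-zero loci $\mathcal{C}_0$, $\mathcal{C}_1$, $\Delta$ and $\{S_2=S_2^\star\}$, on which $\phi=0$, so the concatenated feedback trajectories are continuous extremals that reach $\mathcal{T}$; since an optimum exists and the optimal control was pinned down uniquely on each region by the cited propositions, $u^\star$ is an optimal feedback.
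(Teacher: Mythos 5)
Your proposal takes essentially the same route as the paper: Theorem \ref{main1} is given there with no standalone proof, being presented explicitly as a summary of Propositions \ref{prop1-synthesis}, \ref{prop2-synthesis} and \ref{prop3-synthesis} (together with Propositions \ref{proposition1} and \ref{proposition2} for $\mathcal{Z}_1$ and $\mathcal{E}_0$), which is exactly your region-by-region gluing. If anything you are more careful than the paper, since you make the existence of an optimal control explicit (Filippov) and you flag the strip $\{S_1\in(S_1^\star,\overline{S}_1),\ S_2^\star\le S_2<\zeta(S_1)\}$ below $\mathcal{C}_1$, where the feedback \eqref{feedback1} as written assigns no value and where optimality of $u=1$ along the arcs issued from $\mathcal{C}_1$ is left implicit in the paper.
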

Optimal trajectories corresponding to the feedback $u^{\star}$ are depicted on Fig. \ref{fig1} when $\mathcal{C}_0=\emptyset$ and on Fig. \ref{fig2a}, \ref{fig2b}, and \ref{fig2c} when $\mathcal{C}_0\not=\emptyset$. 
\begin{remark}
The construction of $\mathcal{C}_1$ is explained in Section \ref{numeric-sec}. 
We observe numerically that the switching curve $\mathcal{C}_1$ can be non-smooth, see Fig. \ref{fig2c}. 
We believe that this is a consequence of the non-smoothness of the target set $\mathcal{E}_0$ at $(\underline{S}_1,\underline{S}_2)$.
%, we believe that 
\end{remark}
\subsection{Discussion when $S_1^\star=0$}{\label{pl}}% {\color{blue}($S_1^\star=0$???)}}{\label{pl}}
When \eqref{hypDifficult} is not satisfied, i.e.:
\begin{equation}{\label{hypDifficult2}}
S_1^\star=0,
\end{equation}
then, the existence of the switching curve $\mathcal{C}_1$ is not
straightforward using the previous arguments. Indeed, the construction
of this curve cannot be initiated from $(S_1^\star,S_2^\star)$ as we
do in the previous case with $S_1\longmapsto \xi^\star(S_1)$ (recall \eqref{curve-xi}), because $\{0\}\times [0,M]$ is invariant by the system.
%the curve $\xi^\star$ starting from 
%$(S_1^\star,S_2^\star)$ no longer exists (recall that $\{0\}\times [0,M]$ is in%variant by the system). 
Therefore, extremal trajectories starting above the singular arc with $u=1$ can be optimal until reaching the extended target ${\cal E}_{0}$. 
We obtain the following statement.
\begin{theo}{\label{main1bis}} Assume that Hypotheses H0 and H1 are fulfilled.
In addition, suppose that \eqref{casDur}-\eqref{hypDifficult2} are satisfied. Then, an optimal control $u$ steering the system in minimal time
%{\color{blue}(Question: dit-on 'steering the system to the target' ou 'steering an initial point to the target'?)}  
from $(S_1^0,S_2^0)$ to the target satisfies the following:
\begin{enumerate}[label=\roman{*}.]
\item If the initial condition $(S_1^0,S_2^0)$ is such that $S_2^0> S_2^\star$, then there exists $t_0\geq 0$ such that $u=0$ on $[0,t_0]$, and then we have $u=1$ until reaching the set $\mathcal{E}_0$. 
\item If the initial condition $(S_1^0,S_2^0)$ is such that $S_2^0= S_2^\star$, then :
\begin{itemize}
\item[-] If $S_1^0 \leq S_1^{\min}$, then we have $u=1$ until reaching the set $\mathcal{E}_0$. 
\item[-] If $S_1^0>S_1^{\min}$, then either we have $u=1$ until reaching the set $\mathcal{E}_0$, or we have $u=u_s$ 
on some time interval $[0,t_0]$ with $t_0 \geq 0$, and then $u=1$ until reaching $\mathcal{E}_0$.
\end{itemize}
\item If $S_2^0<S_2^\star$, then an optimal control is given by Theorem \ref{main1}. 
%by ${\bf{Q}}_H$ where ${\bf{Q}}_H$ is defined by:
\end{enumerate}
\end{theo}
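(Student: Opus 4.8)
The plan is to reuse the arguments of Propositions~\ref{prop1-synthesis}--\ref{prop3-synthesis} and to isolate the single step that fails once \eqref{hypDifficult} is replaced by \eqref{hypDifficult2}. That step is the construction of the curve $\xi^\star$ of \eqref{curve-xi}, integrated backward from the end singular state $(S_1^\star,S_2^\star)$: since the segment $\{0\}\times[0,M]$ is invariant, this curve degenerates onto the boundary when $S_1^\star=0$ and can no longer localize the switch. Consequently I expect to recover only the \emph{qualitative} bang / singular structure, with an undetermined switching time $t_0$, rather than a switching curve $\mathcal{C}_1$. Every argument not using $\xi^\star$ is kept verbatim; in particular the sign rules of Corollary~\ref{corollary3} (namely $\phi=0\Rightarrow\dot\phi>0$ when $S_2<S_2^\star$ and $\dot\phi<0$ when $S_2>S_2^\star$, relying on $\lambda_2>0$ from Corollary~\ref{corollary2}) remain the main tool.

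Item~iii is immediate: the proof of Proposition~\ref{prop3-synthesis} never invokes $S_1^\star>0$, so for $S_2^0<S_2^\star$ outside $\mathcal{E}_0$ the control $u=1$ is optimal, which is exactly the restriction of the feedback \eqref{feedback1} to $\{S_2<S_2^\star\}$. For item~i I would first observe that above the singular arc the only admissible switch is from $u=0$ to $u=1$: a passage from $1$ to $0$ at a time where $\phi=0$ and $S_2>S_2^\star$ would require $\dot\phi\ge0$, contradicting $\dot\phi<0$; equivalently, by item~ii of Corollary~\ref{corollary3}, once $\phi\le0$ it stays negative as long as $S_2\ge S_2^\star$. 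Hence the control has the announced form $u=0$ on $[0,t_0]$, $u=1$ afterwards. It then remains to check that the $u=1$ arc reaches $\mathcal{E}_0$: with $u=1$ the trajectory cannot cross $S_2=S_2^\star$ downward on the admissible part $S_1>S_1^{\min}$ (there $\dot S_2=\nu(S_1)>0$) nor cross $\mathcal{C}_0$, on which $\phi=0$ by Proposition~\ref{proposition2} while here $\phi<0$; being also confined to $\mathcal{S}^{\max}$ by Lemma~\ref{Lemma1}, it must enter $\mathcal{E}_0$. The point I cannot establish, and the reason the statement leaves $t_0$ free, is precisely the exclusion of $t_0=0$: in the case $S_1^\star>0$ this was done by comparing the $u=1$ arc with $\xi^\star$ in Proposition~\ref{prop1-synthesis}, and that comparison is no longer available.

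For item~ii I would split on the position of $S_1^0$ relative to $S_1^{\min}$. When $S_1^0\le S_1^{\min}$ the singular control is not admissible ($u_s\ge1$), so only bang arcs are possible; with $u=1$ one has $\dot S_2=\nu(S_1^0)\le0$, the trajectory drops into $\{S_2<S_2^\star\}$ and item~iii forces $u=1$ up to $\mathcal{E}_0$, while $u=0$ is excluded because, integrating the sign of $\dot\phi$ downward from $S_2^\star$, reaching $\mathcal{C}_0$ with $\phi=0$ would force $\phi<0$ at the start, contradicting $u=0$ (and if $S_1^0>\ul S_1$ the target is simply never reached). When $S_1^0>S_1^{\min}$ the singular arc is admissible; reusing the prior-saturation argument (a singular arc cannot be left with $u=0$), the trajectory is either singular on $[0,t_0]$ and then $u=1$, or the limiting bang $t_0=0$; once it leaves the singular arc with $u=1$ it reaches $\mathcal{E}_0$ as in item~i.

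The main obstacle is thus not a single estimate but the structural degeneracy at $S_1^\star=0$: the invariance of $\{0\}\times[0,M]$ removes the separatrix $\xi^\star$ and with it the contradiction that, in Theorem~\ref{main1}, pinned the switch to the prior-saturation point $\overline{S}_1$ and produced the curve $\mathcal{C}_1$. I expect the honest conclusion to be exactly the one stated, in which the constant control $u=1$ reaching $\mathcal{E}_0$ cannot be excluded.
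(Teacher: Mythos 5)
Your proposal is correct and follows essentially the same route as the paper: the paper's proof simply invokes Propositions \ref{prop1-synthesis}--\ref{prop3-synthesis} and notes that, since the curve $\xi^\star$ of \eqref{curve-xi} cannot be constructed when $S_1^\star=0$ (the segment $\{0\}\times[0,M]$ being invariant), the exclusion of the constant control $u=1$ until $\mathcal{E}_0$ is lost, so $t_0$ may be zero. You identified exactly this degeneracy as the single failing step and otherwise reused the sign rules of Corollary \ref{corollary3} and the prior-saturation argument, which is precisely the paper's reasoning, laid out in somewhat more detail.
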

\begin{proof}
The proof of i. and ii. is a consequence of Propositions \ref{prop1-synthesis} and \ref{prop2-synthesis} except that we cannot exclude trajectories with a constant control $u=1$ to be optimal until the set $\mathcal{E}_0$. Therefore, $t_0$ can be zero. 
The proof of iii. is the same as in Theorem \ref{main1}.
\qed
\end{proof}

Optimal trajectories are depicted on Fig. \ref{fig3a}, \ref{fig3b},
and \ref{fig3c} in Section \ref{numeric-sec}. 
We see numerically that there exists a switching curve $\mathcal{C}_1$ 
that satisfies similar properties as in the case $S_1^\star>0$:
\begin{itemize}
\item[-] The curve $\mathcal{C}_1$ is above the singular locus $\mathcal{S}$.
\item[-] The curve $\mathcal{C}_1$ connects the point of prior saturation to a point $(0, S'_2)$ with  $S'_2 \in (S_2^*,M)$.
\end{itemize}
%that connects the point of prior saturation to 
 %Its existence is guaranteed by Theorem \ref{main1bis}. 
%This case is more delicate to handle as we cannot define a curve $S_1\longmapsto \xi(S_1)$ as in the proof of Proposition \ref{d} allowing to exclude trajectories $u=1$ from the singular arc for $S_1>\overline{S}_1$. 

\section{Numerical simulations and discussion}
\label{numeric-sec}
We have chosen for $f$ the linear functions $f(S_1):=S_1$,
%fulfill Hypothesis {\bf{H0}}, with $f(S_{1})=S_{1}$.% and $\gamma(1)=1$.
and for the specific growth rate $\mu(\cdot)$, we have considered the
Haldane function:
\[
\mu(S_{2}):=\frac{\bar\mu S_{2}}{K_{s}+S_{2}+S_{2}^{2}/K_{i}}.% \ .
\]
One can straightforwardly check that Hypotheses {\bf{H0}} and {\bf{H1}} are satisfied 
with
\[
S_{2}^{\star}=\sqrt{K_{s}K_{i}} .
\]
One can notice that the Haldane function can be seen as a
generalization of the Monod function $\mu_m$ (which is monotonic and often
used in microbial growth) defined by:
\[
\mu_m(S_{2}):=\frac{\mu_{\max} S_{2}}{K_{s}+S_{2}},
\]
on a the interval $[0,M]$, taking large values of the parameter $K_{i}$.

We now explain how the curve $\mathcal{C}_1$ is computed numerically (Theorem \ref{main1} and Theorem \ref{main1bis}). 
The switching curve $\mathcal{C}_1$ is guaranteed by Theorem \ref{main1} (whenever $S_1^{\min}>S_1^\star>0$). Recall that the function $\phi$ vanishes both on $\mathcal{C}_1$ and on $\mathcal{C}_0$. In order to plot $\mathcal{C}_1$, we integrate backward in time the system with the maximal control $u=1$ from $\mathcal{C}_0$ ($\mathcal{C}_0$ is known explicitly). More precisely, the construction goes as follows.
Consider the dynamics:
\begin{equation}
\label{dyn_Qmax_S1}
\left\{
\begin{array}{lll}
\ds \frac{d\sigma_{2}}{d\sigma_{1}} & = & \ds -1+\frac{\mu(\sigma_{2})(M-\sigma_{1}-\sigma_{2})}{f(\sigma_{1})},\\[4mm]
\ds \frac{d\psi}{d\sigma_{1}} & = & \ds
-\frac{\mu^{\prime}(\sigma_{2})}{\mu(\sigma_{2})f(\sigma_{1})}-\psi\left(\frac{f^{\prime}(\sigma_{1})}{f(\sigma_{1})}+
\frac{\mu^{\prime}(\sigma_{2})}{\mu(\sigma_{2})}\right),
\end{array}
\right.
\end{equation}
with initial conditions
\begin{equation}
\label{BC_Qmax_S1}
(\sigma_{2}(\sigma_{10}),\psi(\sigma_{10}))=(\sigma_{20},0), \quad
(\sigma_{10},\sigma_{20}) \in {\cal S}_{0}.
\end{equation}
We shall denote $(\sigma_{2}^{(\sigma_{10},\sigma_{20})}(\cdot),
\psi^{(\sigma_{10},\sigma_{20})}(\cdot))$ its solutions. The previous system describes the evolution of 
$S_2$ and $\phi$ backward in time from $\mathcal{S}_0$. 
Now, define a mapping $\theta: \mathcal{S}_0 \rightarrow \mathbb{R}$ associating to any initial condition on $\mathcal{C}_0$ 
the value of $\sigma_1$ for which the solution of \eqref{dyn_Qmax_S1}-\eqref{BC_Qmax_S1} is such that
$$
\psi^{(\sigma_{10},\sigma_{20})}(\sigma_1))=0.
$$
From Theorem \ref{main1}, we know that there exists a non-empty subset $E\subset \mathcal{S}_0$ such that $\mathcal{C}_1$ is the 
image of $E$ by $\theta$. 
In order to compute numerically $\mathcal{C}_1$, we integrate the previous system and we stop the integration whenever $\psi$ vanishes, 
which corresponds to a switching point. If $\psi$ does not vanish, then we repeat this procedure 
by changing the initial condition on $\mathcal{S}_0$. The curve
$\mathcal{C}_1$ is depicted on Fig. \ref{fig1}
%, \ref{fig2a}, \ref{fig2b}, \ref{fig2c}, \ref{fig3a}, \ref{fig3b},
to \ref{fig3c}:
%depict $\mathcal{C}_1$, it is enough to integrate this system from  $(\sigma_{20},0)$ until the first . If this then, 

%For each $(\sigma_{10},\sigma_{20}) \in {\cal S}_{0}$, we define
%the set
%\[
%E(\sigma_{10},\sigma_{20})=\{ \sigma_{1}>\sigma_{10} \, \vert \,
%\sigma_{2}^{(\sigma_{10},\sigma_{20})}(\sigma_{1})\geq S_{2}^{\star} \mbox{ and }
%\psi^{(\sigma_{10},\sigma_{20})}(\sigma_{1})>0 \} \ ,
%\]
%and $\tilde{\cal S}_{0}$ as the subset of $(\sigma_{10},\sigma_{20})\in{\cal
%  S}_{0}$ such that $E(\sigma_{10},\sigma_{20})\neq\emptyset$.
%Then we define the set
%\[
%{\cal C}_{1}:= \bigcup_{(\sigma_{10},\sigma_{20})\in \tilde{\cal S}_{0}} \left\{
%(\sigma_{1},\sigma_{2}^{(\sigma_{10},\sigma_{20})}(\sigma_{1})) \,
%\vert \, \sigma_{1}=\inf E(\sigma_{10},\sigma_{20}) \right\} \ .
%\]

\begin{itemize}
\item Case I (see Fig. \ref{fig1}) corresponds to the case where \eqref{casDur} and \eqref{hypDifficult} are satisfied (optimal synthesis given by Theorem \ref{main1}). Moreover, in this case, $\mathcal{C}_0=\emptyset$, but $\mathcal{C}_1\not= \emptyset$.
\item Cases IIa, IIb and IIc (see Fig. \ref{fig2a}, \ref{fig2b}, and \ref{fig2c}) correspond to the case where \eqref{casDur} and \eqref{hypDifficult} are satisfied (optimal synthesis given by Theorem \ref{main1}). Moreover, we see in Fig. \ref{fig2a} and \ref{fig2b} that $S_1^{\min}$ and $S_1^\star$ can be less or greater than $\bar S_1$. Fig. \ref{fig2c} depicts a case where the switching curve $\mathcal{C}_1$ seems to be non-smooth (due to the non-smoothness of the target set at the corner point). 
\item Case IIIa, IIIb and IIIc (see Fig. \ref{fig3a}, \ref{fig3b}, and \ref{fig3c}) correspond to the case where \eqref{casDur} and \eqref{hypDifficult2} are satisfied (optimal synthesis given by Theorem \ref{main1bis}). Moreover, we see in Fig. \ref{fig3a} that $\mathcal{C}_1$ is defined only from points of $\mathcal{C}_0$ whereas in Fig. \ref{fig3b}, 
the curve $\mathcal{C}_1$ is defined both from points of $\mathcal{C}_0$ and from $\{\bar S_2\}\times [0, \bar S_1]$. In Fig. \ref{fig3c}, we observe that 
$\mathcal{C}_1$ is non-smooth (same property as in case IIc). 
%is defined in . _1and \ref{fig2b} that $S_1^{\min}$ and $S_1^\star$ can be less or greater than $\bar S_1$. Fig. \ref{fig2c} depicts a case where the switching curve $\mathcal{C}_1$ is probably non-smooth (due to the non-smoothness of the target set at the corner point). 
\item Case IVa and IVb depict optimal trajectories as in Proposition \ref{propositionHaldane1} when $S_1^{\min}>\bar S_1$ and $S_1^{\min}<\bar S_1$.
\end{itemize}
Table \ref{table1} presents the values of the parameters for the different cases that have been simulated.
\begin{table}
\begin{center}
\begin{tabular}{|c|c|c|c|c|c|c|c|c|}
\hline
case & $\bar\mu$ & $K$ & $K_{i}$ & $1$ & $M$ & $\underline S_{1}$ & $\underline S_{2}$ &Optimal Synthesis\\
\hline
I & 1 & 2 & 0.23 & 0.1 & 1.3 & 0.15 & 0.05 & Theorem \ref{main1}\\
\hline
II a & 1 & 5 & 0.23 & 0.03 & 1.3 & 0.29 & 0.05 & Theorem \ref{main1}\\
\hline
II b & 1 & 3.5 & 0.23 & 0.04 & 1.3 & 0.14 & 0.02 & Theorem \ref{main1}\\
\hline
II c & 1 & 3.5 & 0.23 & 0.015 & 1.3 & 0.14 & 0.02 & Theorem \ref{main1} \\
\hline
III a & 30 & 4 & 0.7 & 5 & 2.4 & 0.2 & 0.02 & Theorem \ref{main1bis} \\
\hline
III b & 30 & 4 & 0.7 & 5 & 2.4 & 0.09 & 0.02 & Theorem \ref{main1bis} \\
\hline
III c & 30 & 4 & 0.7 & 5 & 2.4 & 0.05 & 0.02 & Theorem \ref{main1bis} \\
\hline
IV a & 1 & 2 & 0.23 & 0.1 & 1.3 & 0.15 & 0.8 & Proposition \ref{propositionHaldane1}\\
\hline
IV a & 1 & 2 & 0.23 & 1 & 1.3 & 0.15 & 0.8 & Proposition \ref{propositionHaldane1} \\
\hline
\end{tabular}
\caption{List of cases \label{table1}}
\end{center}
\end{table}

\begin{figure}
\begin{center}
\includegraphics[scale=0.35]{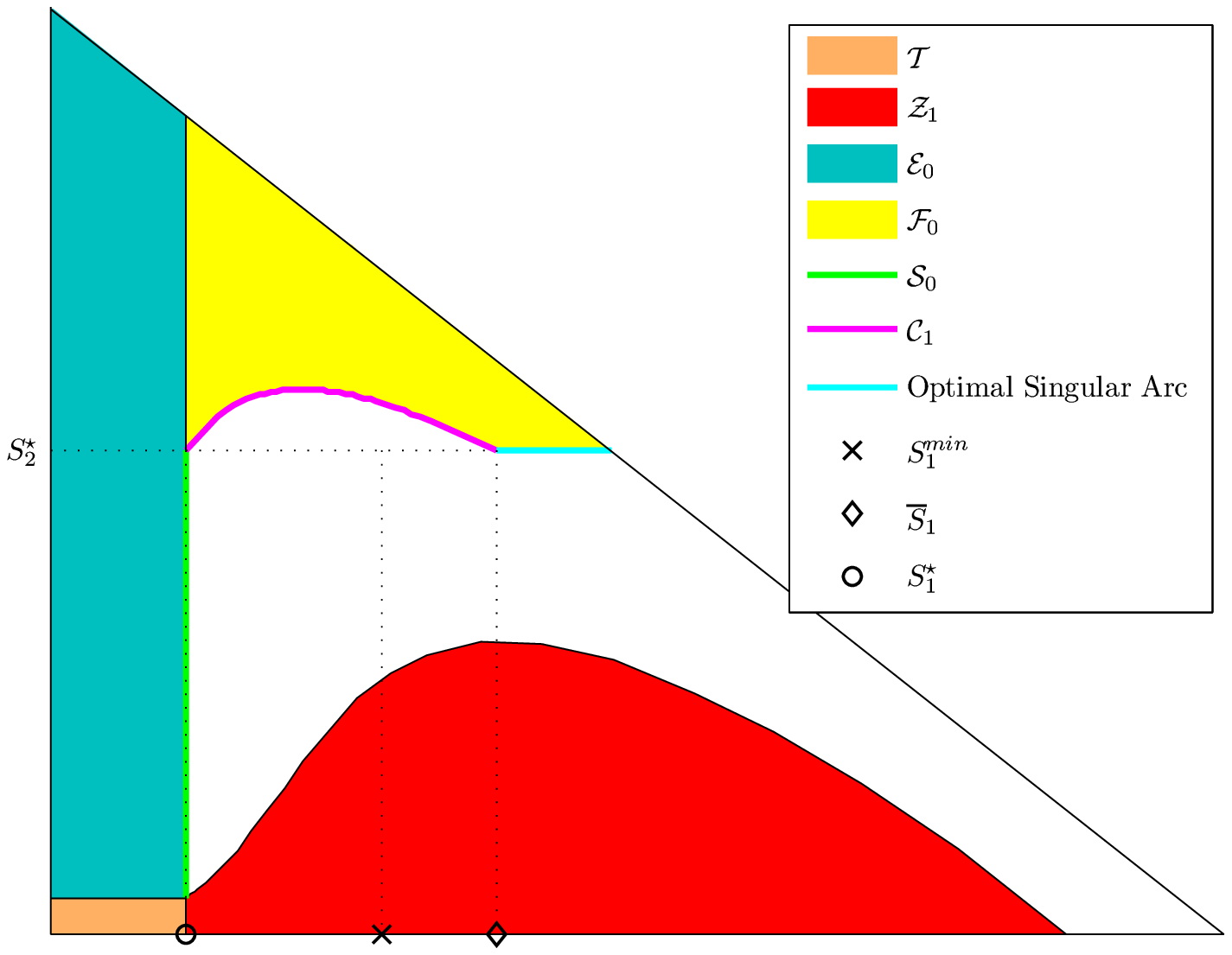} \hspace{4mm}
\includegraphics[scale=0.35]{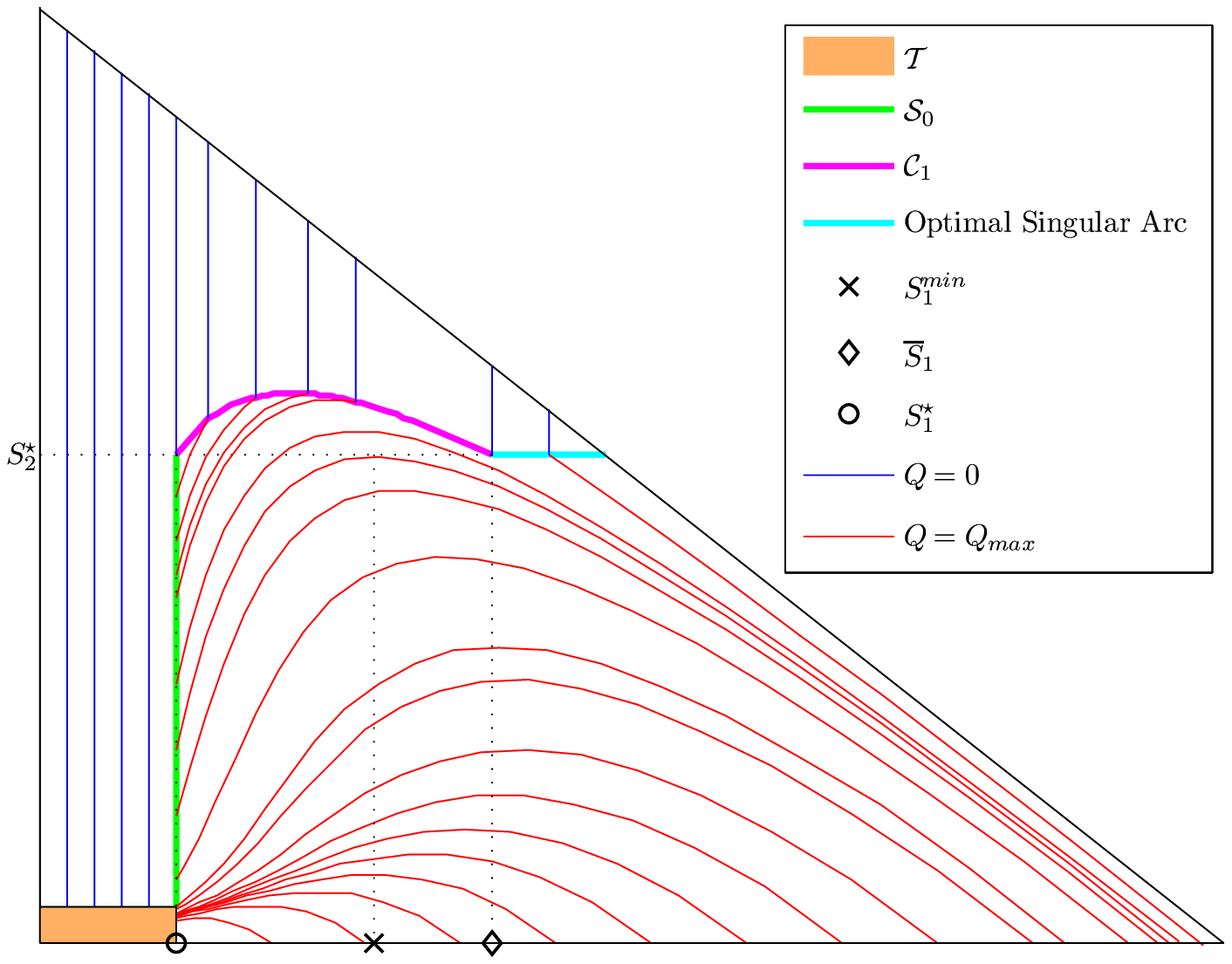}
\caption{Case I. {\it{Picture left}}: Partition of the state space. {\it{Picture right}}: Optimal synthesis provided by Theorem \ref{main1}).\label{fig1} }
\end{center}
\end{figure}

\begin{figure}
\begin{center}
\includegraphics[scale=0.35]{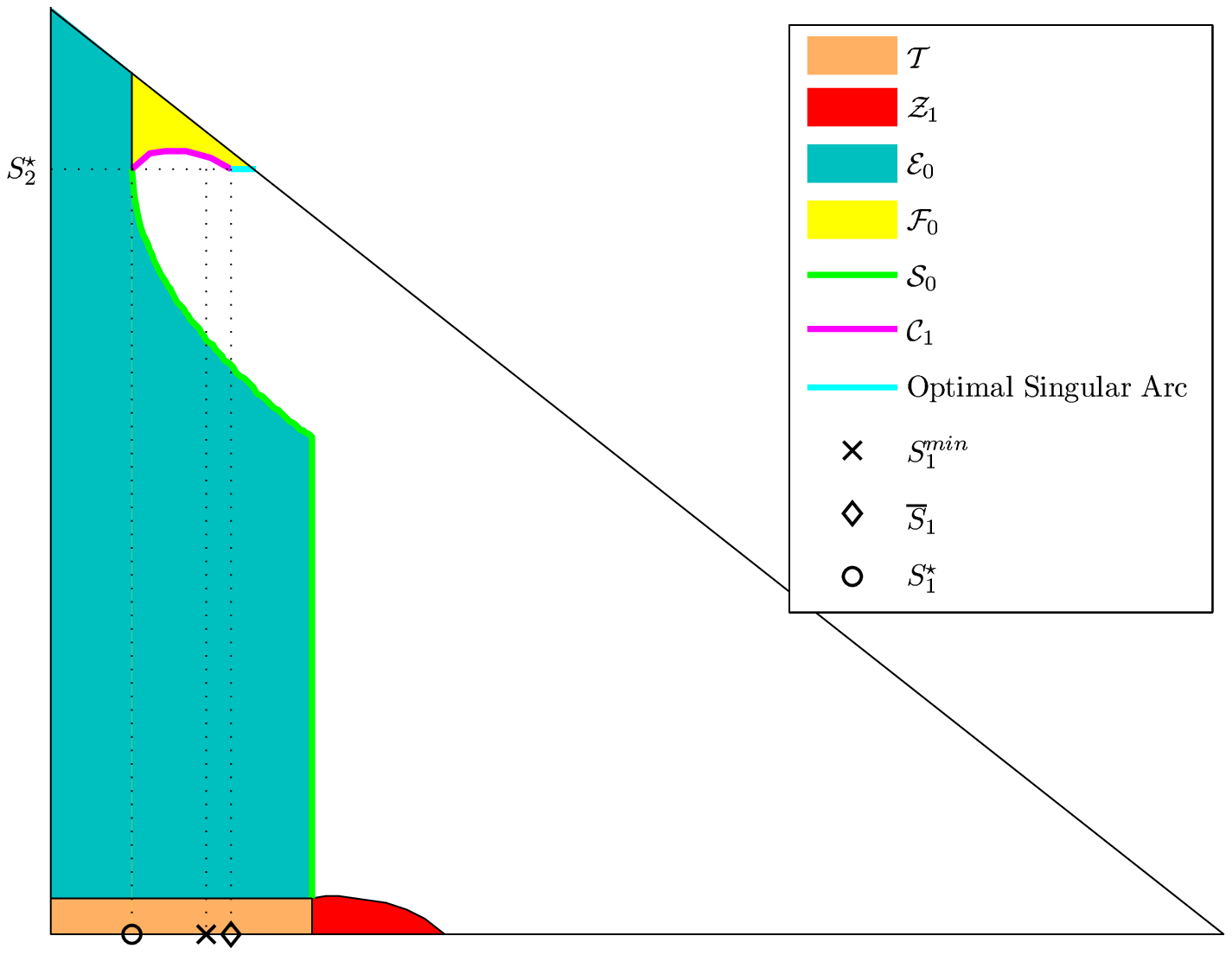} \hspace{4mm}
\includegraphics[scale=0.35]{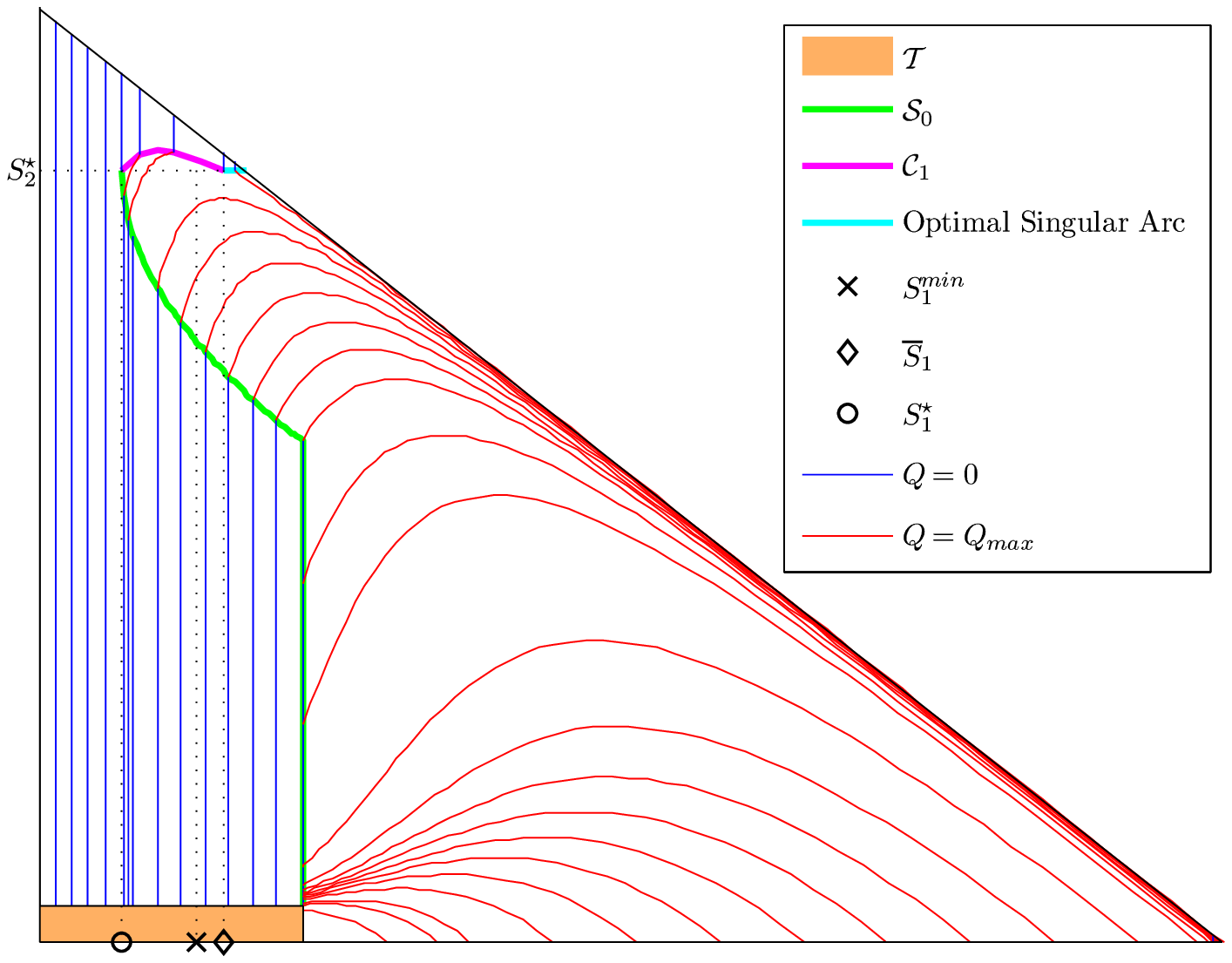}
\caption{Case IIa. {\it{Picture left}}: Partition of the state space. {\it{Picture right}}: Optimal synthesis provided by Theorem \ref{main1}). \label{fig2a}}
\end{center}
\end{figure}

\begin{figure}
\begin{center}
\includegraphics[scale=0.35]{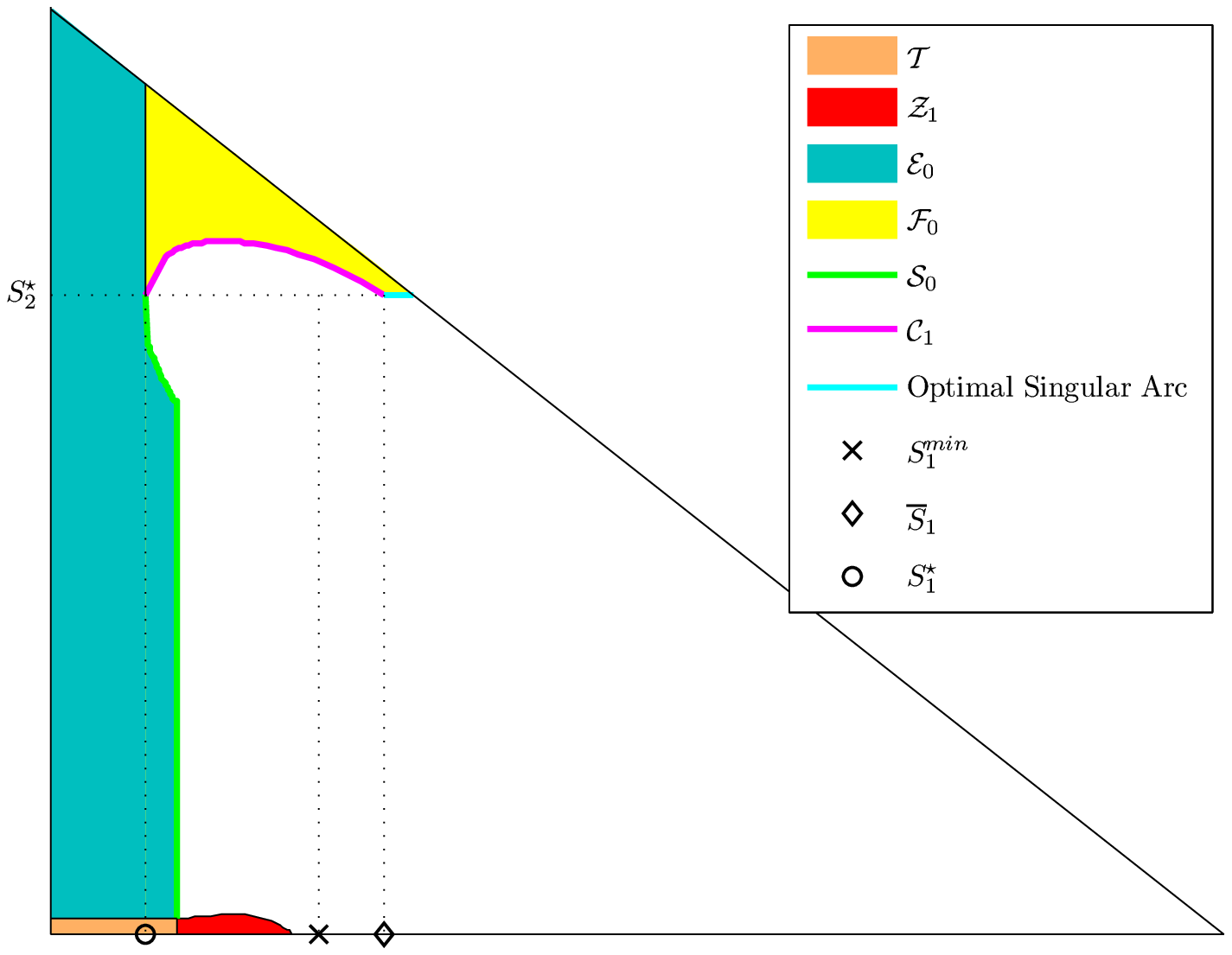} \hspace{4mm}
\includegraphics[scale=0.35]{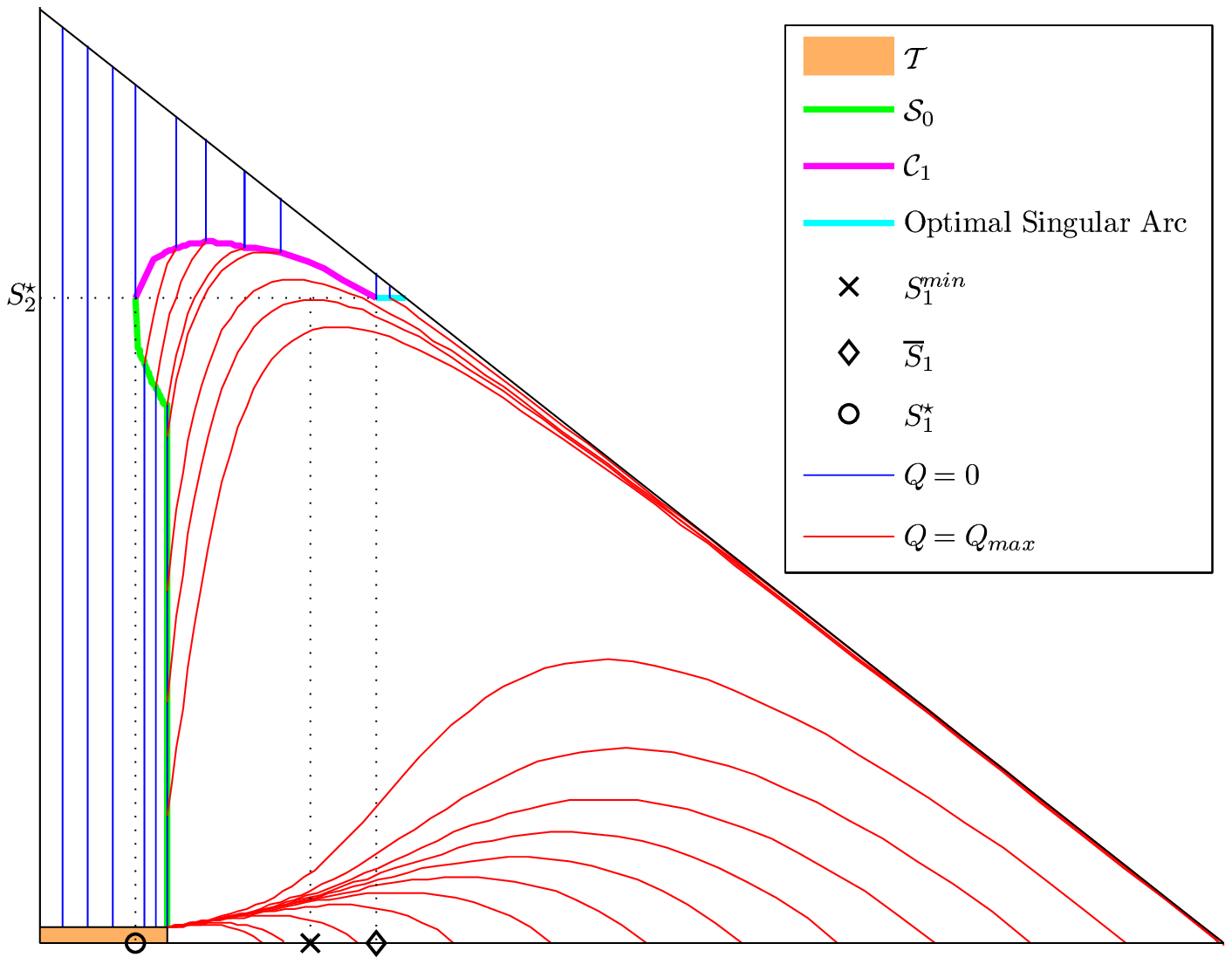}
\caption{Case IIb. {\it{Picture left}}: Partition of the state space. {\it{Picture right}}: Optimal synthesis provided by Theorem \ref{main1}). \label{fig2b}}
\end{center}
\end{figure}

\begin{figure}
\begin{center}
\includegraphics[scale=0.35]{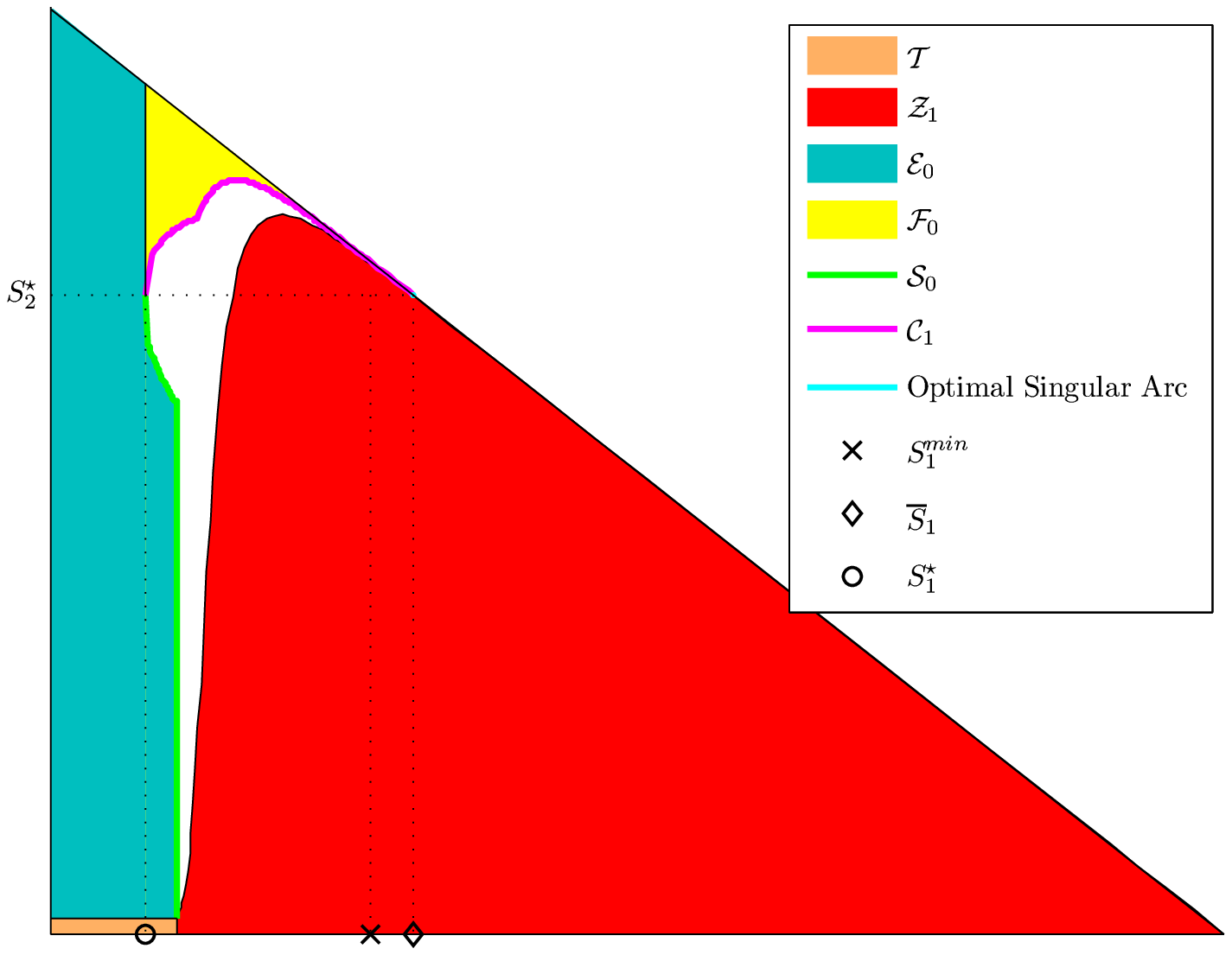} \hspace{4mm}
\includegraphics[scale=0.35]{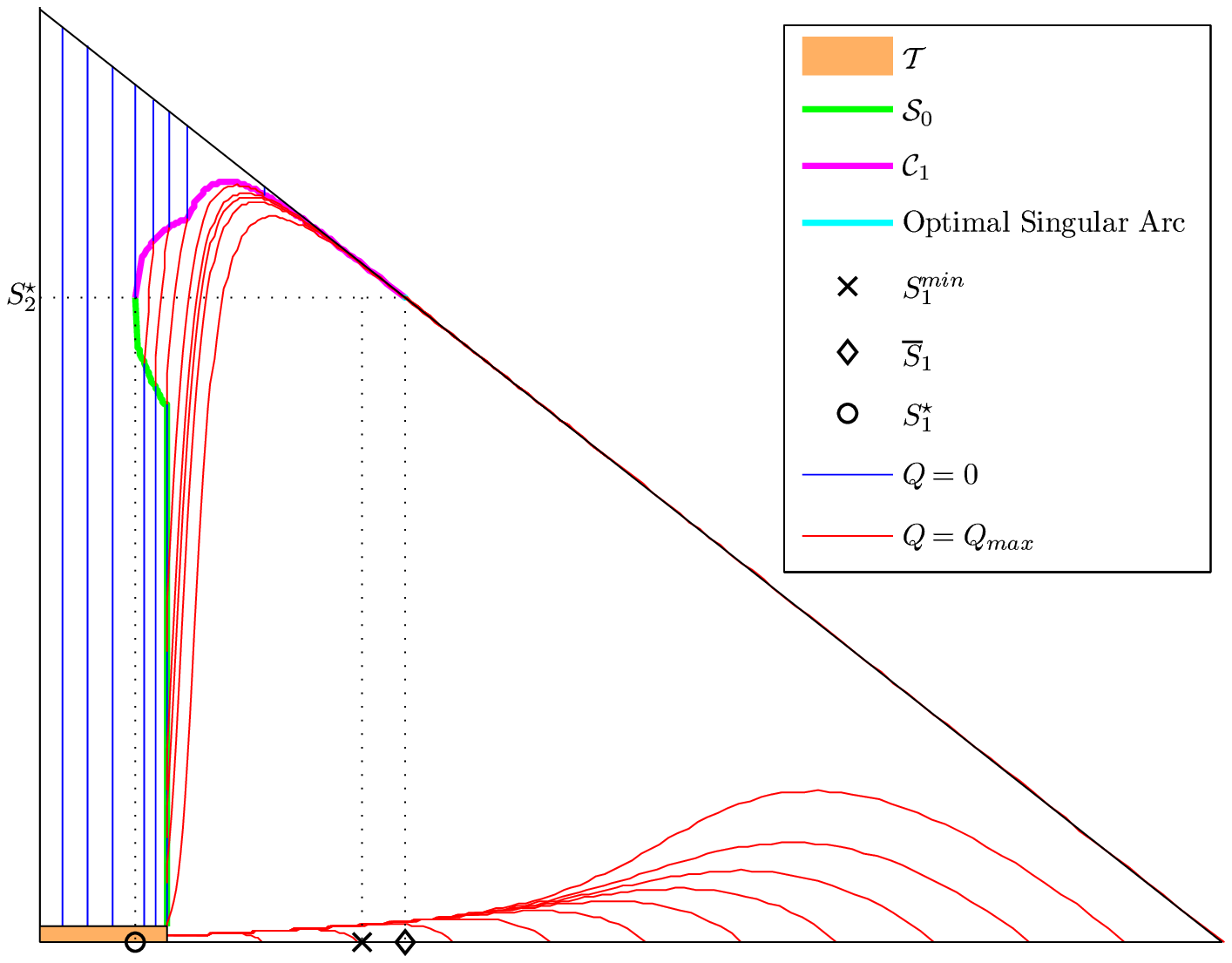}
\caption{Case IIc. {\it{Picture left}}: Partition of the state space. {\it{Picture right}}: Optimal synthesis provided by Theorem \ref{main1}). \label{fig2c}}
\end{center}
\end{figure}

\begin{figure}
\begin{center}
\includegraphics[scale=0.35]{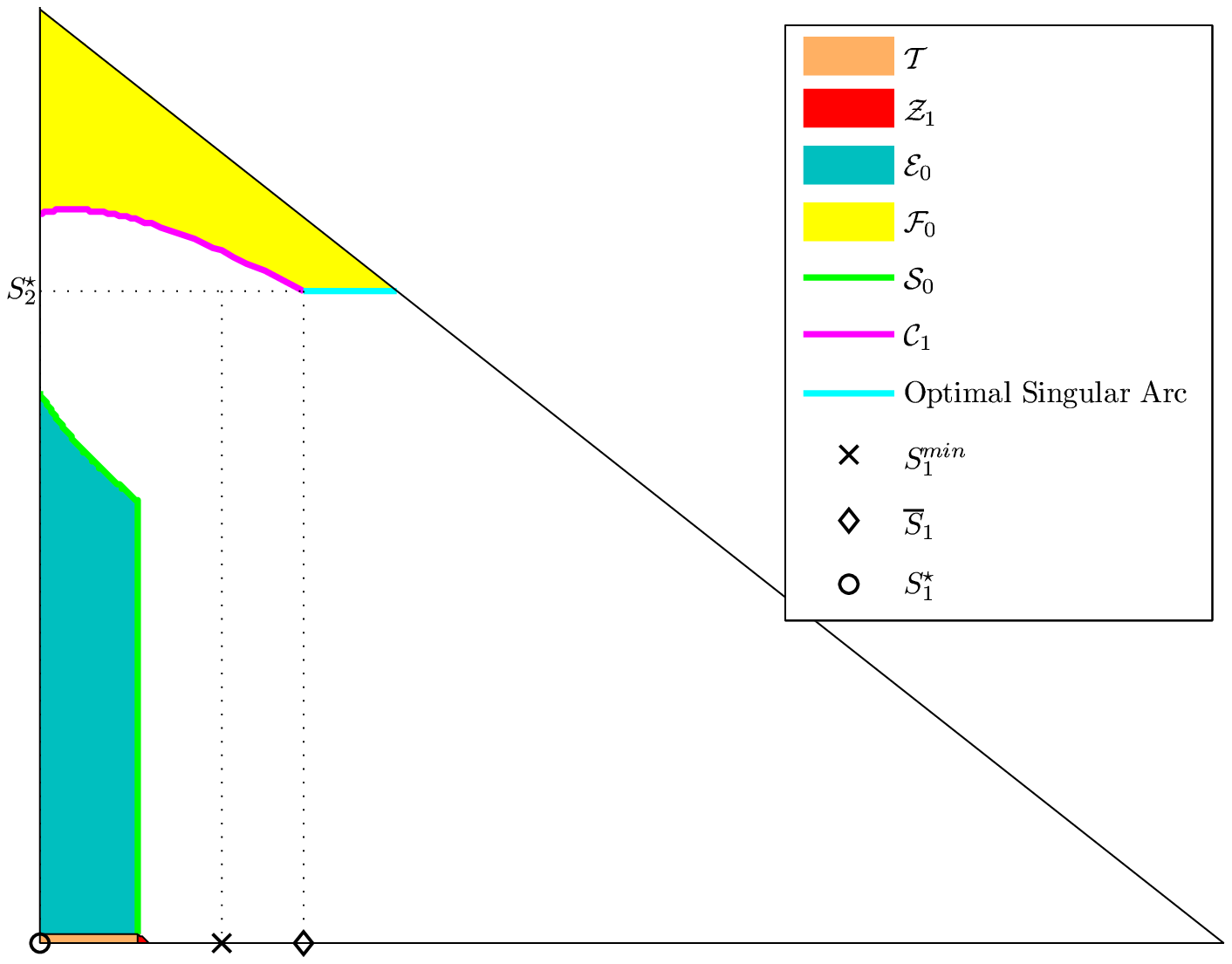} \hspace{4mm}
\includegraphics[scale=0.35]{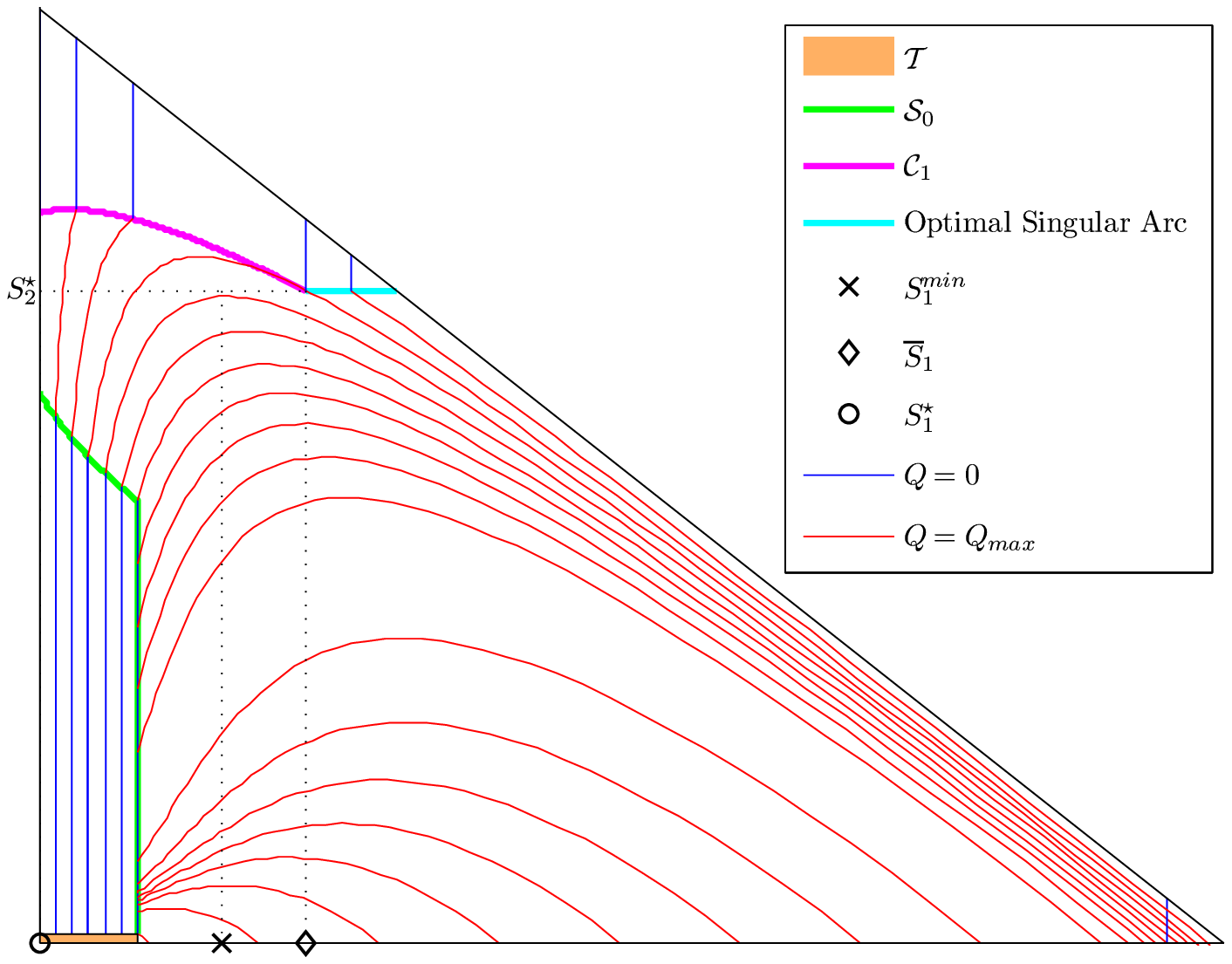}
\caption{Case IIIa. {\it{Picture left}}: Partition of the state space. {\it{Picture right}}: Optimal synthesis provided by Theorem \ref{main1bis}).} \label{fig3a}
\end{center}
\end{figure}

\begin{figure}
\begin{center}
\includegraphics[scale=0.35]{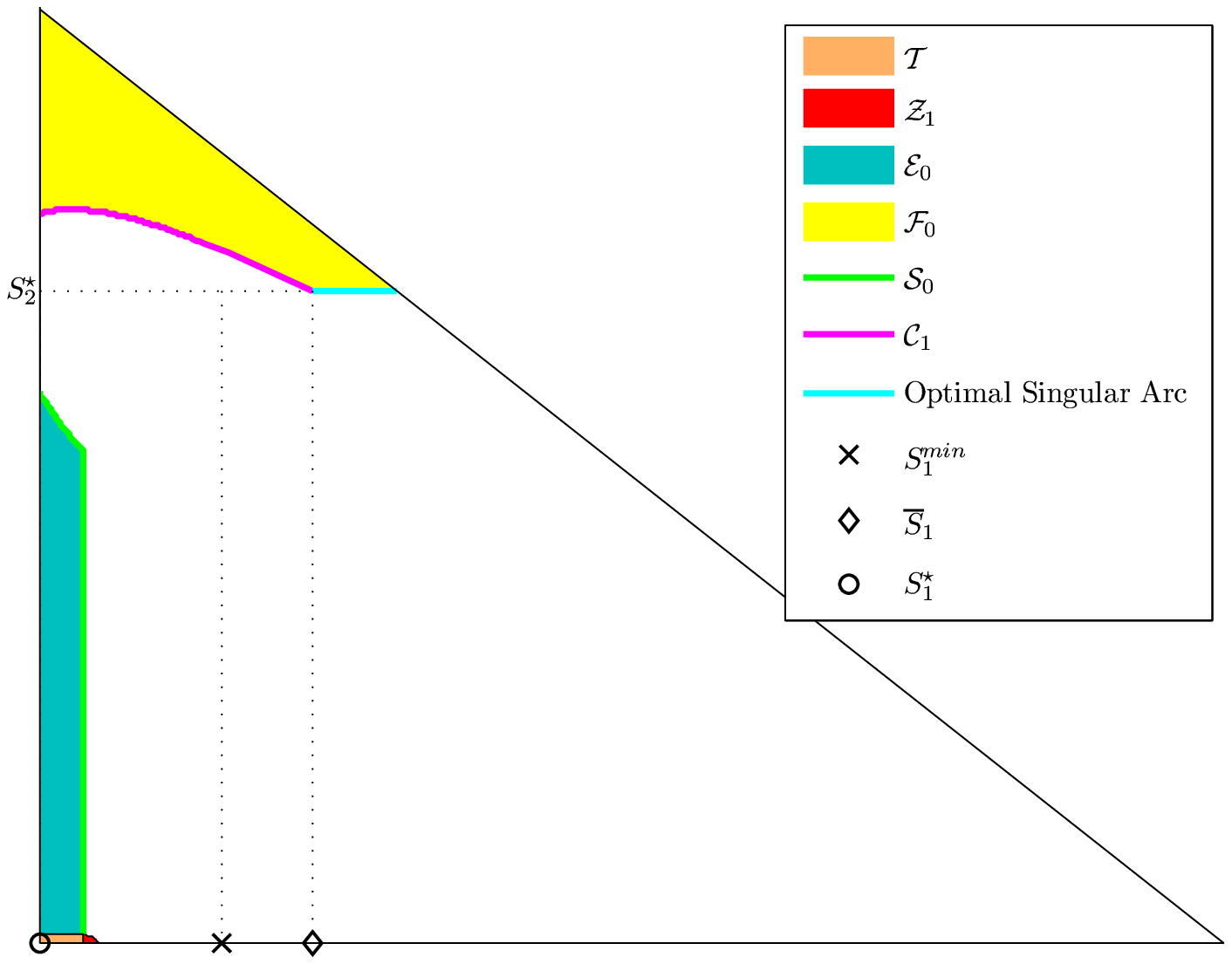} \hspace{4mm}
\includegraphics[scale=0.35]{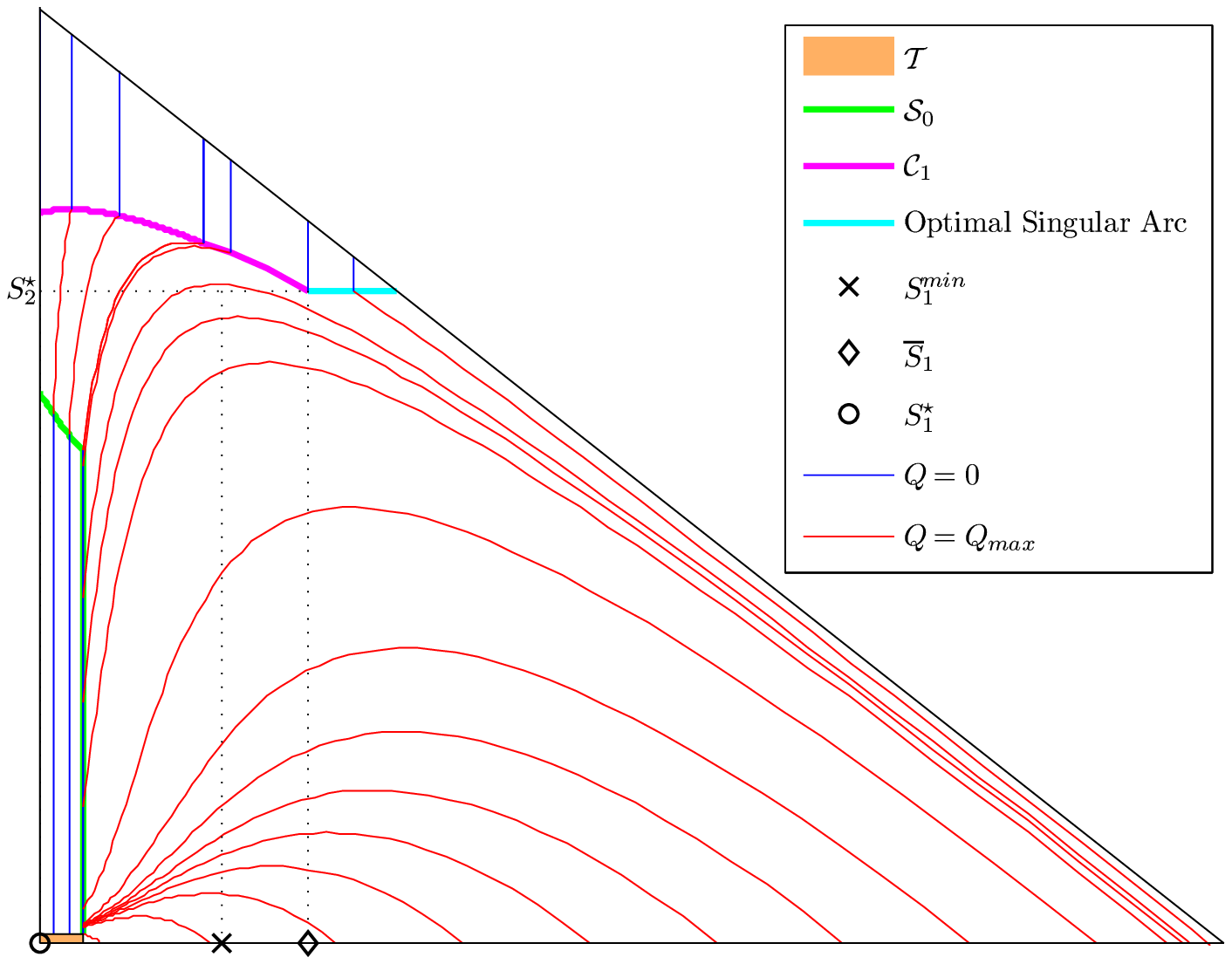}
\caption{Case IIIb. {\it{Picture left}}: Partition of the state space. {\it{Picture right}}: Optimal synthesis provided by Theorem \ref{main1bis}). \label{fig3b}}
\end{center}
\end{figure}

\begin{figure}
\begin{center}
\includegraphics[scale=0.35]{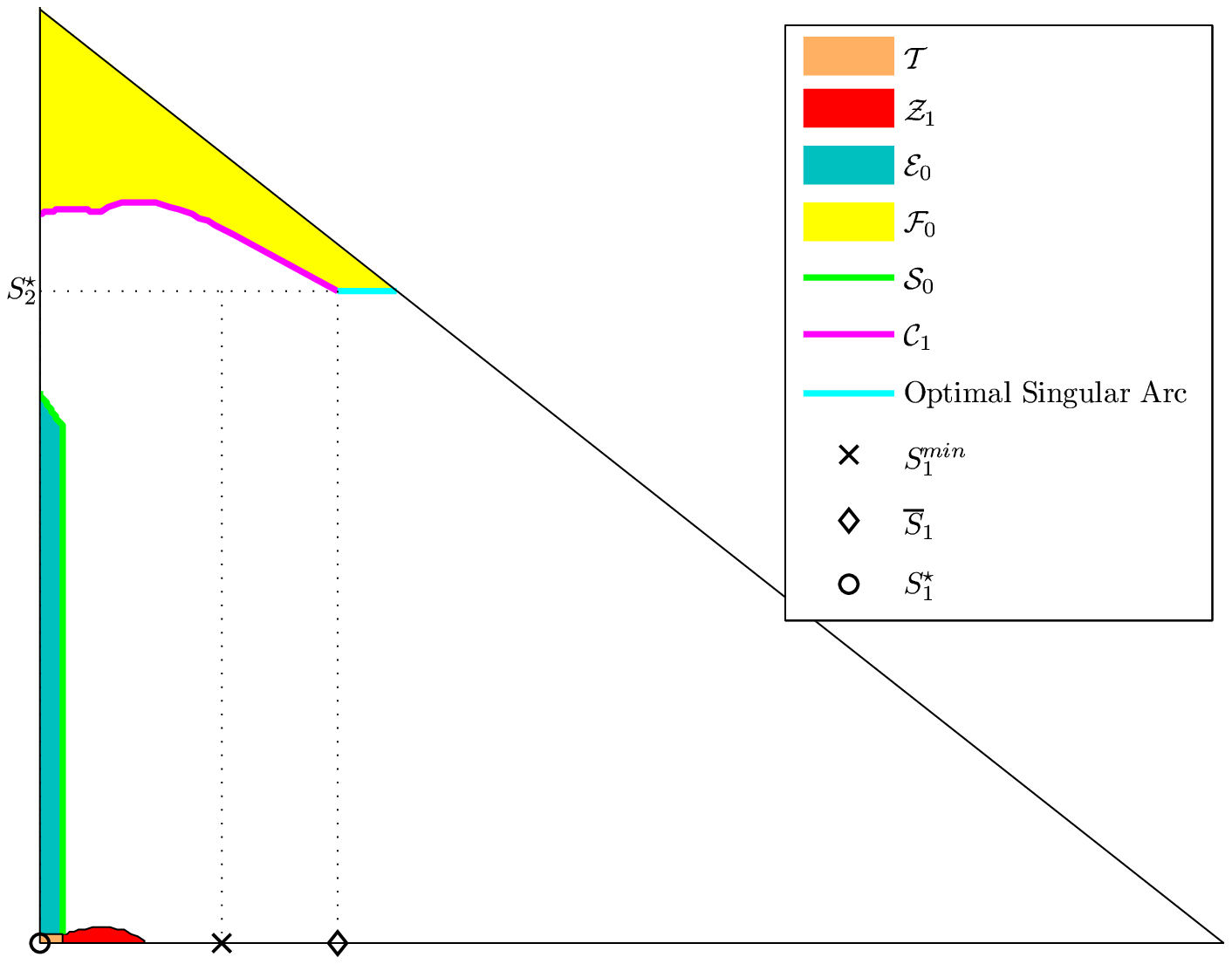} \hspace{4mm}
\includegraphics[scale=0.35]{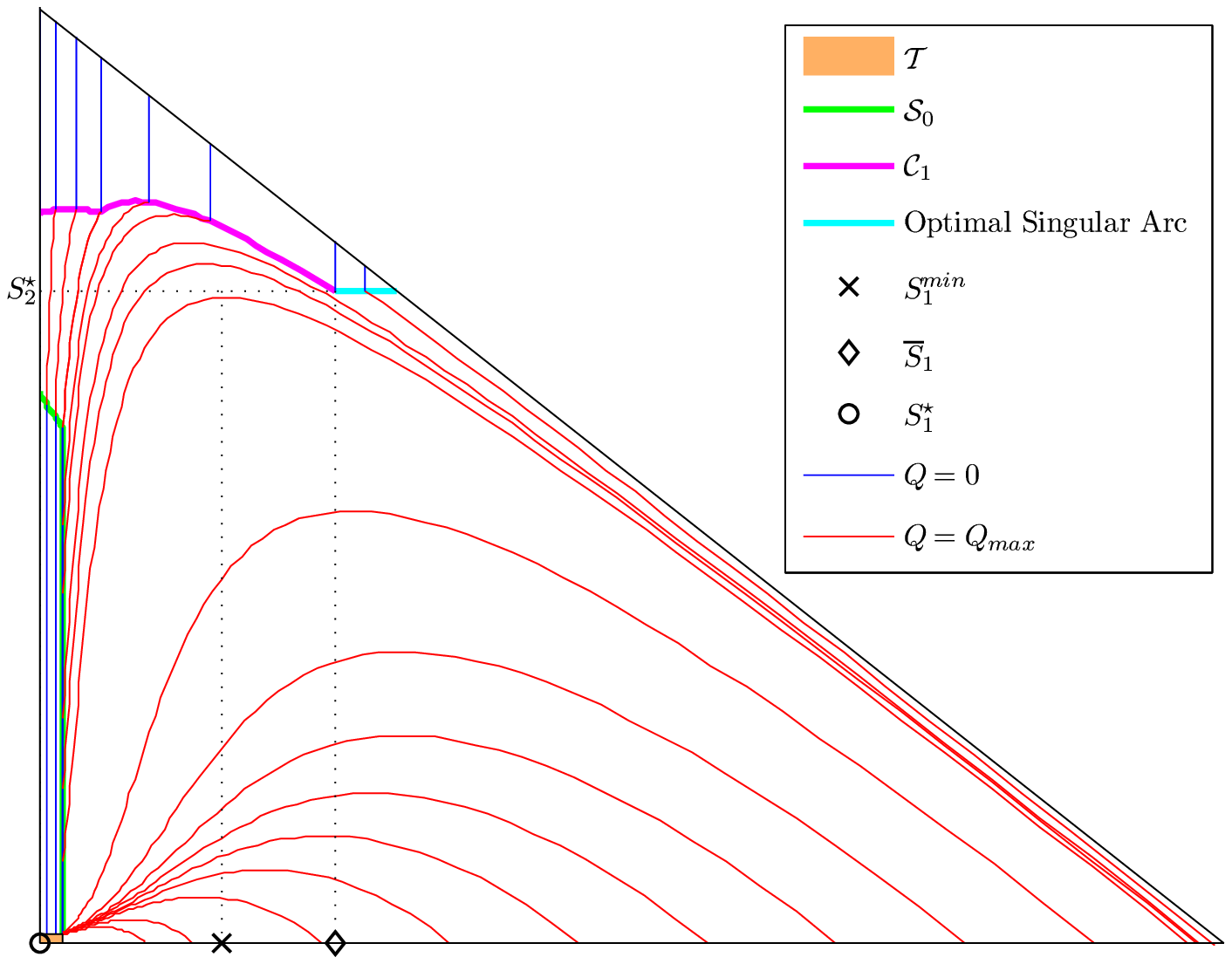}
\caption{Case IIIc. {\it{Picture left}}: Partition of the state space. {\it{Picture right}}: Optimal synthesis provided by Theorem \ref{main1bis}). \label{fig3c}}
\end{center}
\end{figure}

\begin{figure}
\begin{center}
\includegraphics[scale=0.35]{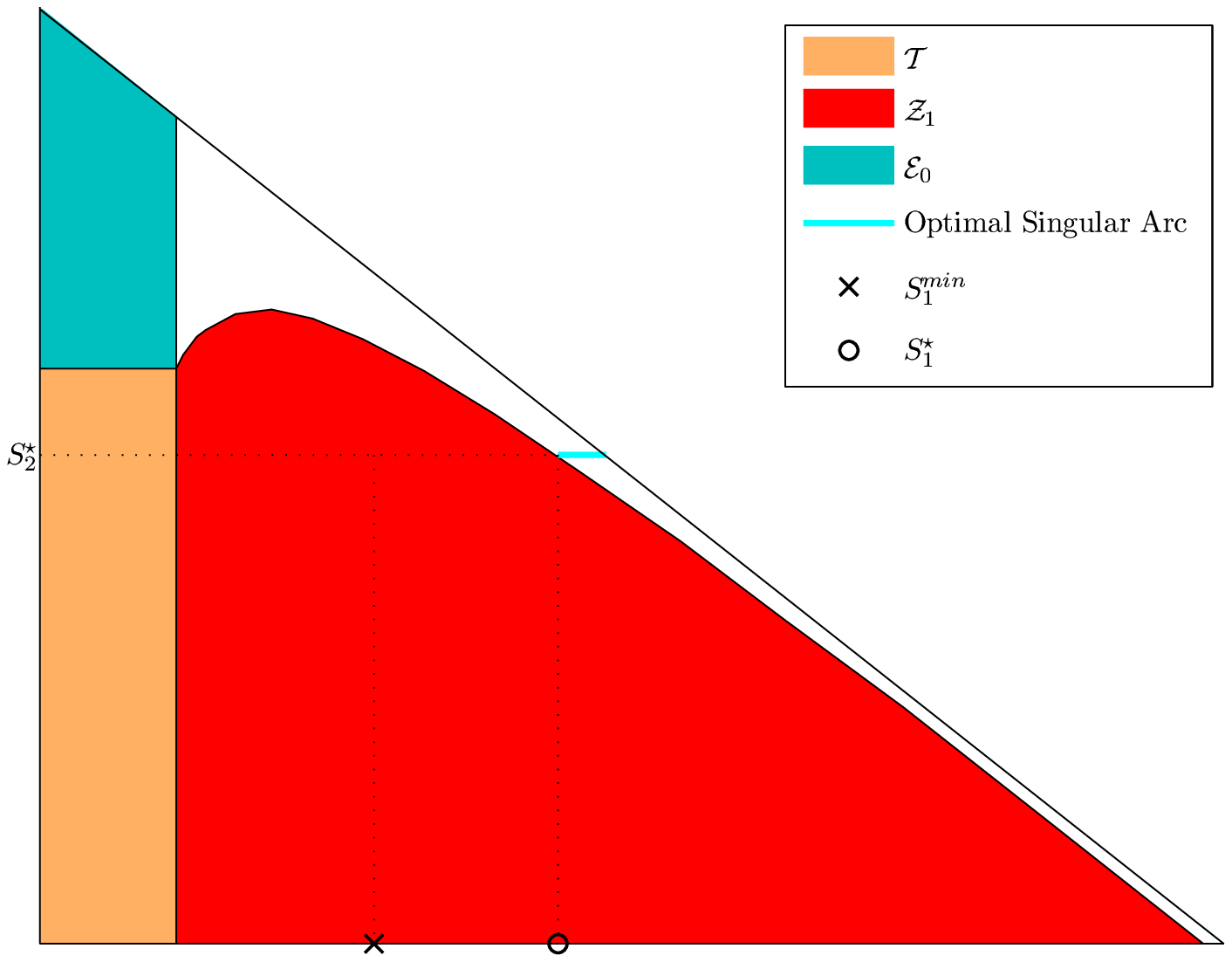} \hspace{4mm}
\includegraphics[scale=0.35]{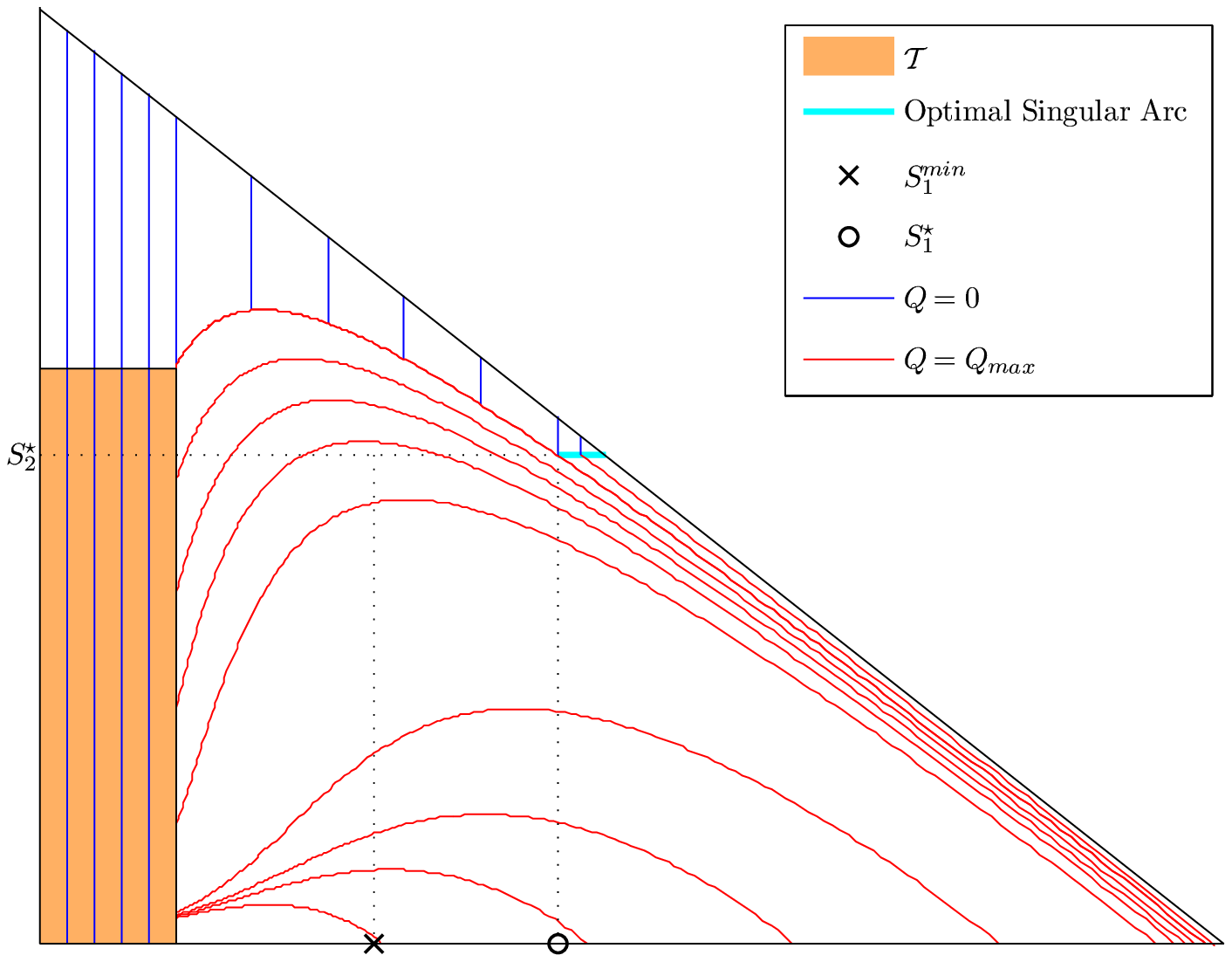}
\caption{Case IVa. {\it{Picture left}}: Partition of the state space. {\it{Picture right}}: Optimal synthesis provided by Proposition \ref{propositionHaldane1}.  \label{fig4a}}
\end{center}
\end{figure}

\begin{figure}
\begin{center}
\includegraphics[scale=0.35]{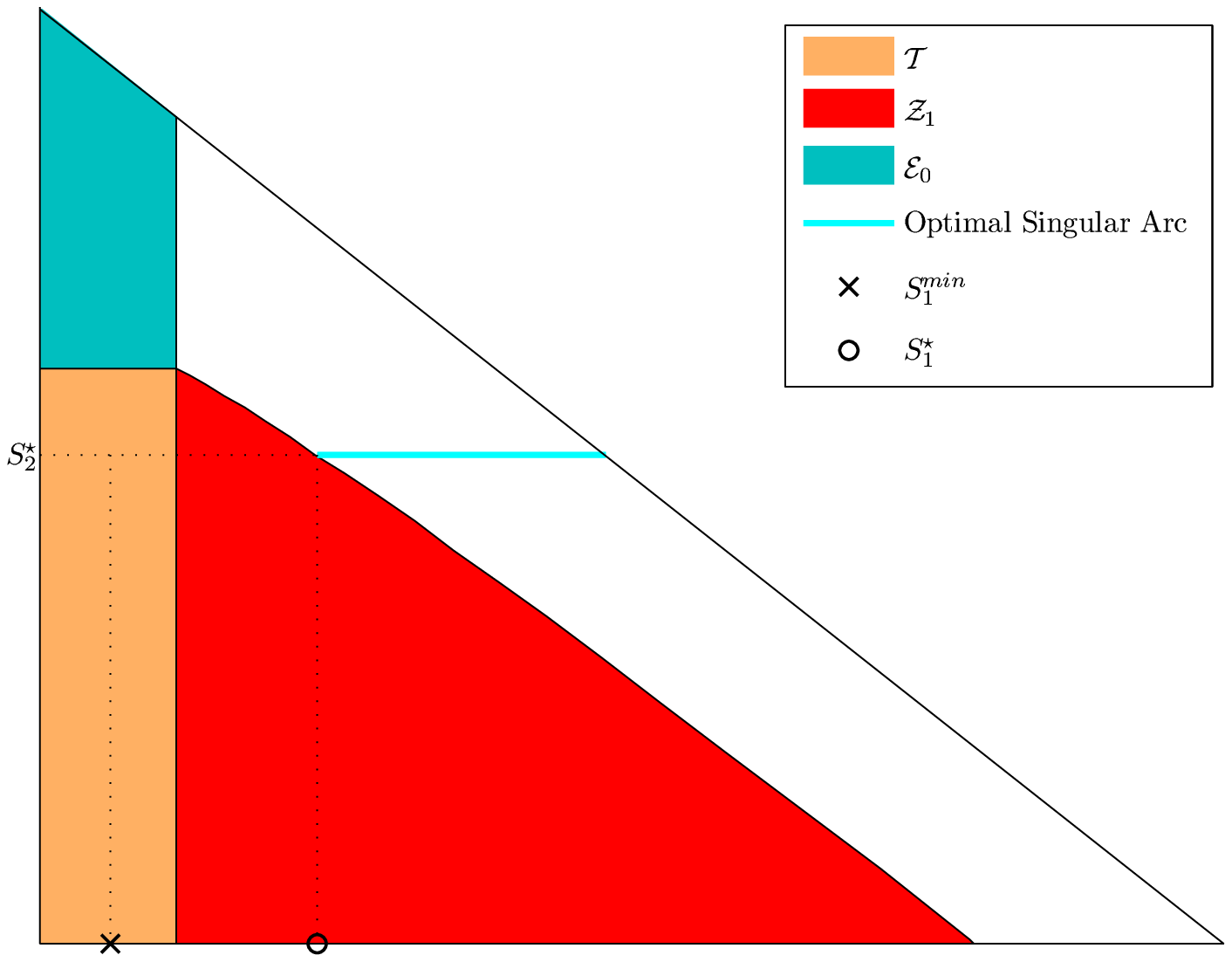} \hspace{4mm}
\includegraphics[scale=0.35]{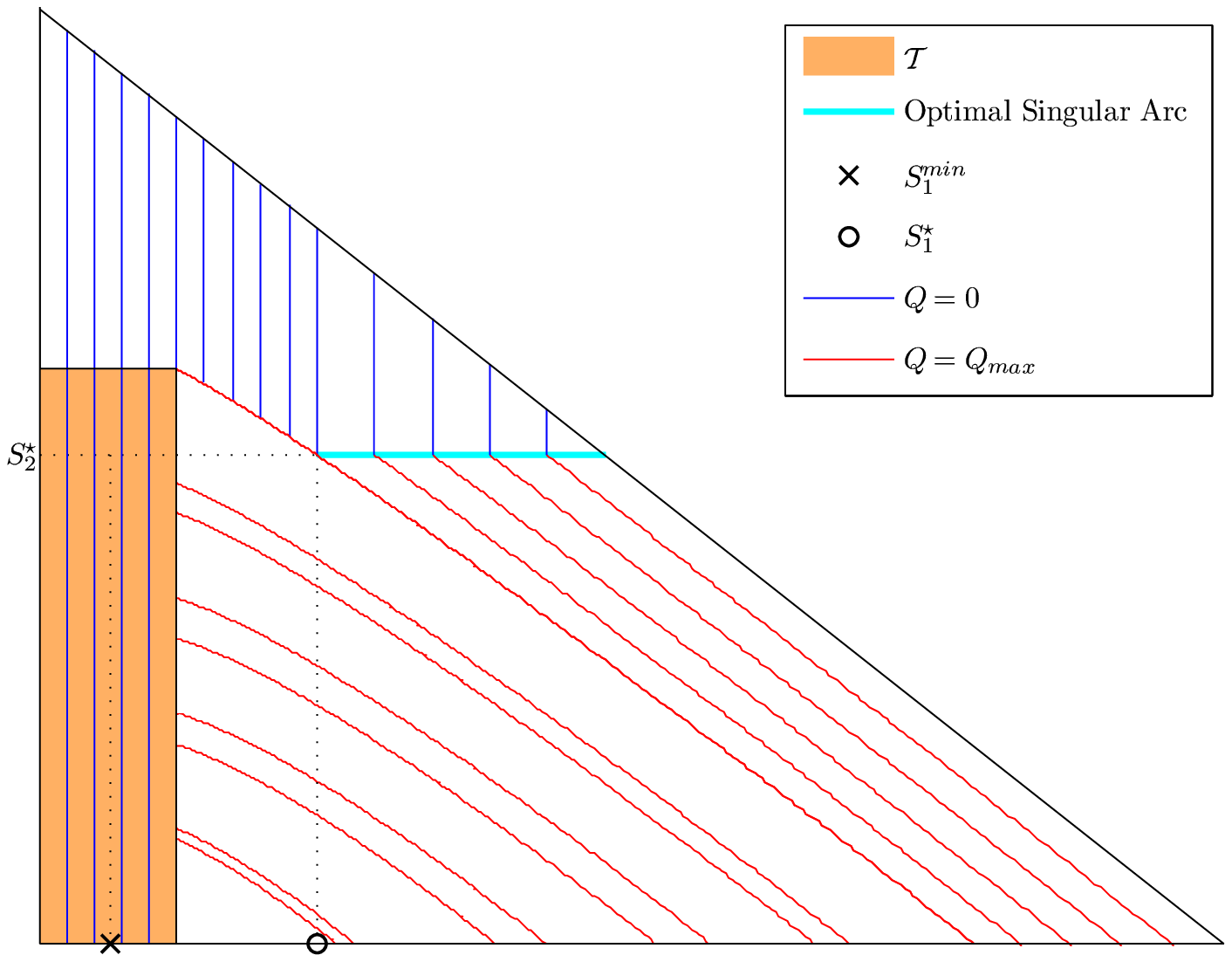}
\caption{Case IVb. {\it{Picture left}}: Partition of the state space. {\it{Picture right}}: Optimal synthesis provided by Proposition \ref{propositionHaldane1}. \label{fig4b}}
\end{center}
\end{figure}

\bigskip

To summarize the optimal synthesis of the problem, we have proceeded as follows. First, we have defined a switching curve 
$\mathcal{C}_0$ as a set of points where the control $u=0$ is optimal until reaching the target. This allows us to 
define an extended target set $\mathcal{E}_0$. Whenever the singular arc is admissible until $\mathcal{E}_0$ the optimal strategy is 
a most rapid approach to the singular arc \cite{BGM13}. In presence of the saturating phenomena, i.e. when the singular arc has a barrier in $\mathcal{D}\backslash \mathcal{E}_0$, then optimal trajectories can have an additional switching point on a curve $\mathcal{C}_1$ 
that can be constructed backward in time from $\mathcal{C}_0$. 
We have pointed out that the difficulty of showing the existence of the switching curve $\mathcal{C}_1$   
whenever the extended target set $\mathcal{E}_0$ does not intersect the singular arc. 
The study of this point is out of the scope of the paper and could deserve further investigations.
%Here, we summarize properties of the optimal synthesis (structure of the optimal control, uniqueness of optimal trajectories, and continuity of the value function).  

The structure of an optimal control is as follows.
We denote by $B_{\pm}$ an arc Bang $u=0$ or $u=1$ and by $S$ a singular arc on a time interval $[t_1,t_2]$. 
We see that when the singular arc is always admissible (see Proposition \ref{propositionHaldane1}), then the optimal synthesis is of type $B_{\pm}SB_{\pm}$ or $B_{\pm}B_{\mp}$, see Fig. \ref{fig4a} and \ref{fig4b}. Hence, optimal trajectories have at most two switching points depending on the initial condition. 
In presence of the saturating phenomena, then the optimal synthesis is of type $B_{\pm}B_{\mp}$, $B_{\pm}SB_{\pm}$, or
$B_{\pm}SB_{\pm}B_{\mp}$. In this case, the optimal synthesis is more intricate and optimal trajectories can have 
three switching points depending on the initial condition. 
%{\color{blue}Opinion personnelle: La notation $B_{\pm}B_{\mp}$ n'est pas très claire. Quelle est la différence entre $B_{\pm}$ y $B_{\mp}$?}

Extremal trajectories corresponding to the feedback control law provided by Proposition \ref{propositionHaldane1} are unique. In fact, the uniqueness is clear in the set $Z_1\cup \mathcal{E}_0$, and we can conclude by Green's Theorem (see \cite{bosc}) in $\mathcal{D} \backslash (Z_1\cup \mathcal{E}_0)$. We believe that this property still holds (by exclusion of extremal trajectories that are not optimal) in the case of the feedback law \eqref{feedback1}. 
Finally, we can prove that the value function is continuous
\cite{BC97} (Proposition 1.6 p.230).

\section{Conclusion}

In this work, we have provided a complete analysis of the optimal
synthesis of a model of landfill controlled by the re-circulation
flow. Although the proposed model is simple, the geometry of the
optimal trajectories, depending on the position of the initial
condition with respect to sub-domains that we have characterized, can be
intricate. This analysis can provide useful information in decision
making for the practitioners in different situations, depending on the
characteristics of the landfill (bacterial growth rate and maximum
re-circulation flow).
\begin{itemize}
\item When the landfill operation can be performed in its early stage,
  one may expect to have initial concentration of unsolubilized
  substrate high and solubilized one low. Then, the determination of
  the subset ${\cal Z}_{1}$ appears to be crucial. If it is large, it
  is likely to contain the initial condition and the
  optimal strategy is straightforward: recirculate at the maximal
  speed until the unsolubilized substrate reaches the desired
  concentration. No measurement of the solubilized and no switch on
  the control are necessary, as the state is expected to stay ${\cal
    Z}_{1}$.
\item When the state of the landfill is out of the set ${\cal Z}_{1}$,
  this means that the concentration of solubilized substrate has to
  take large values, and that practitioners would have to stop the
  re-circulation at a certain stage and wait for the solubilized to
  decrease due to the microbial activity.
\item The determination of the best time to stop the re-circulation is
  not necessarily the one when the unsolubilized substrate has reached
  the desired threshold. It can be more efficient to carry on the
  re-circulation until reaching the switching curve ${\cal C}_{0}$.
\item As the concentration of solubilized substrate can significantly
  increase during the transient, its bacterial degradation could
  suffer from an inhibition of the micro-organisms, that is typically
  modeled by a non-monotonic growth rate function, that reaches it
  maximum for some $S_{2}^{\star}$ value. Then, a singular
  arc could be part of the optimal synthesis, which consists in controlling the
  re-circulation flow to regulate the level of the concentration of the
  solubilized substrate at $S_{2}^{\star}$, when its has reached this
  value, until the state reaches the set ${\cal Z}_{1}$ or the
  switching curve ${\cal C}_{0}$.
\item In certain circumstances, the maximal re-circulation flow does not allow to
  maintain the concentration of the solubilized substrate at
  $S_{2}^{\star}$ while reaching the
  switching curve ${\cal C}_{0}$. Then, the optimal decision is to
  anticipate this lack of controllability, and to use the maximal
  re-circulation flow when the state is reaching another switching
  curve ${\cal C}_{1}$.
\item In any case when the state does not reach the set ${\cal
    Z}_{1}$, the final stage is to stop the re-circulation and to
  measure the concentration of solubilized substrate until it reaches
  the desired threshold.
\end{itemize}
Further investigations could concern optimal criteria that take into
consideration the energy spent for the re-circulation and the
valorization of the bio-gas produced by the bacterial activity. 
The consideration of spatial inhomogeneity in the model and its impact
on the optimal strategy could be also the matter of a future research.

\section*{Acknowledgments}
This work was developed in the context of the DYMECOS INRIA
Associated Team, project BIONATURE of CIRIC, INRIA Chile and
CONICYT grant REDES 130067.
The authors express their acknowledgments to F. Carrera, P. Gajardo,
J. Harmand, H. Ramirez, G. Ruiz and V. Riquelme for fruitful exchanges.

\end{document}